\newtheorem{theorem}{Theorem}[section]
\newtheorem{lemma}{Lemma}[section]
\newtheorem{proposition}{Proposition}[section]
\newtheorem{remark}{Remark}[section]
\newtheorem{definition}{Definition}[section]
\numberwithin{equation}{section}
      \newcommand{\R}{{\mathbb{R}}}
      \newcommand{\curl}{\operatorname{curl}}
      \newcommand{\dive}{\operatorname{div}}
      \newcommand{\N}{\mathbb{N}}
      \newcommand{\ext}{\operatorname{ext}}
      \newcommand{\inte}{\operatorname{int}}
      \newcommand{\loc}{\operatorname{loc}}
      \newcommand{\eps}{\varepsilon}
      \newcommand{\mR}{\mathbb{R}}
      \newcommand{\pbE}{\operatorname{\mathcal E}}
      \newcommand{\pbH}{\operatorname{\mathcal H}}
        \newcommand{\VV}{\mathbb{V}}
      \newcommand{\bE}{\operatorname{\textbf{E}}}
      \newcommand{\bH}{\operatorname{\textbf{H}}}
     \newcommand{\supp}{\mbox{supp }}
           \newcommand{\dsp}{\displaystyle}
      \def\@setcopyright{}
      \def\serieslogo@{}
\begin{document}

   \author[H.-M. Nguyen]{Hoai-Minh Nguyen}
\author[L. Tran]{Loc Tran}

\address[H.-M. Nguyen]{Department of Mathematics, EPFL SB CAMA, Station 8,  \newline\indent
	 CH-1015 Lausanne, Switzerland.}
\email{hoai-minh.nguyen@epfl.ch}

\address[L. Tran]{Department of Mathematics, 
	EPFL SB CAMA Station 8, 
	\newline\indent CH-1015 Lausanne, Switzerland.}
\email{loc.tran@epfl.ch}


\title[Cloaking for Maxwell's equations]{Approximate cloaking for electromagnetic waves via transformation optics: cloaking vs infinite energy}
   
\begin{abstract}
We study  the approximate cloaking  via transformation optics for electromagnetic waves in the time harmonic regime in which the cloaking device {\it only} consists of a layer constructed by the mapping technique. Due to the fact that no-lossy layer is required, resonance might appear and the analysis is delicate.  We analyse  both non-resonant and resonant cases. In particular, we show that the energy can blow up inside the cloaked region in the resonant case and/whereas  cloaking is {\it achieved} in {\it both} cases. Moreover, the degree of visibility {\it depends} on the compatibility of the source inside the cloaked region and the system. These facts are new and  distinct from known mathematical results in the literature.  
\end{abstract}

   \dedicatory{}

   \date{\today}

   \maketitle

\tableofcontents



\section{Introduction}

 
Cloaking via transformation optics was introduced by Pendry, Schurig, and Smith \cite{Pen} for the Maxwell system and by Leonhardt \cite{Leo} in the geometric optics setting. They used a singular change of variables which blows up a point into a
cloaked region. The same transformation was used  by Greenleaf, Lassas,
and Uhlmann  to establish the nonuniqueness of Calderon's problem in \cite{Green}.  The singular nature of the cloaks presents various difficulties in practice as well as in theory: (1) they are hard to fabricate and (2) in certain cases the correct definition of the corresponding electromagnetic fields is not obvious. To avoid using
the singular structure, various regularized schemes have been proposed. One of them  was suggested by Kohn, Shen, Vogelius, and Weinstein  in \cite{Kohn} in which they  used a  transformation which blows up a small ball of radius $\rho$ instead of a point 
into the cloaked region.

Approximate cloaking schemes for the Helmholtz equation based on the regularized transformations introduced in \cite{Kohn} have been studied extensively in  \cite{GKLU07-1, GKLU07, Kohn1, HM1, HM2, HM4, Cap, Ammari13-1, HV, GV}.  Frequently, a (damping) lossy layer is employed inside the transformation cloak. Without the lossy layer, the field inside the cloaked region might depend on the field outside, and resonance might appear and affect the cloaking ability of the cloak, see  \cite{HM2}. Approximate cloaking was investigated in the time domain for the acoustic waves in  \cite{HM5, HM6}. In \cite{HM6}, the dependence of the material constants on frequency via the Drude-Lorentz model was taken into account.   

Cloaking for electromagnetic waves via transformation optics has been mathematically investigated  by several authors.  Greenleaf, Kurylev, Lassas, and Uhlmann in  \cite{GKLU07-1}  and Weder in  \cite{Weder1, Weder2} studied cloaking for the singular scheme mentioned above by considering finite energy solutions. Concerning this approach, the information inside the cloaked region is not seen by observers outside.  
Approximate cloaking for the Maxwell equations using schemes in the spirit of \cite{Kohn} was considered in \cite{Bao, Ammari13, Lassas}. In \cite{Ammari13}, Ammari et al. investigated cloaking using additional  layers inside the transformation cloak. These additional layers depending on the cloaked object   were chosen in an appropriate  way to cancel first terms in the asymptotic expansion of the polarization tensor to  enhance the cloaking property.   In \cite{Bao}, Bao, Liu, and Zou  studied approximate cloaking using  a lossy layer inside the transformation cloak. Their approach is as follows. Taking into account the lossy layer, one  easily obtains an estimate for the electric field inside the lossy layer.  This estimate depends on the property of the lossy layer and degenerates as the lossy property disappears. They then used the equation of the electric field in  the lossy layer to derive estimates for the electric field on the boundary of the lossy region in some negative Sobolev norm.  The cloaking estimate can  be finally deduced from the integral representation for the electric field. This approach essentially uses the property of the lossy-layer and does not provide an optimal estimate of the degree of visibility in general.   
For example, when a fixed lossy layer is employed, they showed  that the degree of visibility is of the order $\rho^2$, which is not optimal. 
 In \cite{Lassas}, Lassas and Zhou considered the transformation cloak in a symmetric setting, dealt with the non-resonant case (see Definition~\ref{def-R}) and studied the limit of  the solutions of the approximate cloaking problem near the cloak interface using separation of variables. Other regularized schemes are considered in \cite{DLU}. 


In this paper, we  investigate approximate cloaking for the Maxwell equation in the time harmonic regime using a  scheme in the spirit of \cite{Kohn}. More precisely, we consider the situation where 
the cloaking device {\it only} consists of a layer constructed by the mapping technique and there is no source in that layer. Due to the fact that no-lossy (damping) layer is required, resonance might appear and the analysis is subtle.  
Our analysis is given in both non-resonant  and resonant cases (Definition~\ref{def-R}) and the results can be briefly summarized as follows. 

\begin{enumerate} 
\item[i)] In the  non-resonant case, cloaking is achieved,  and the energy remains finite inside the cloaked region. 

\item[ii)] In the resonant case,  cloaking is also {\it achieved}. Nevertheless,  the degree of invisibility varies and depends on the compatibility (see \eqref{passive} and \eqref{N-passive}) of the source  with the system. Moreover, the energy inside the cloaked region might explode in the incompatible case. See Theorems~\ref{thm1.1} and \ref{thm1.2}. 

\item[iii)] The degree of visibility is of the order $\rho^3$ for both non-resonant and resonant cases if no source is  inside the cloaked region (Theorems~\ref{thm1} and \ref{thm1.1}). 
\end{enumerate}

We also investigate the behavior of the field in the {\it whole} space (Theorems~\ref{thm1}, \ref{thm1.1}, and \ref{thm1.2}) and establish the optimality of the  convergence rate (Section~\ref{sect-opt}).  Our results are new and distinct from the works mentioned above. First, cloaking  takes place even if the energy explodes inside the cloaked region as $\delta$ goes to 0. Second, in the resonant case with finite energy inside the cloaked region, the fields inside the cloaked region satisfy a non-local structure. Optimal estimates for the degree of visibility are derived for all cases. In particular, in the case of a fixed lossy layer (non-resonant case), the degree of visibility is of the order $\rho^3$ instead of   $\rho^2$ as obtained previously. Both non-resonant and  resonant cases are analysed in details  without assuming the symmetry of the cloaking setting.

Our approach is    different from the ones in the  works mentioned. It is based on severals subtle estimates for the effect of small inclusion involving the blow-up structure. Part of the analysis is on Maxwell's equations in the low frequency regime, which is interesting in itself.  The approach in this paper is inspired from \cite{HM2} where the acoustic setting was considered. Nevertheless, the analysis for the electromagnetic setting is challenging and requires further new ideas 
due to the non-standard structure coming from the mapping technique and the complexity of electromagnetic structures/phenomena in comparison with acoustic ones. The Helmholtz decomposition and Stokes' theorem are involved in the Maxwell context. 


\section{Statement of the main results}

In this section, we describe the problem in more details and state the main results of this paper. For simplicity of notations, we suppose that the cloak occupies
the annular region $B_2 \setminus B_1$ and the cloaked region is the unit ball $B_1$ in $\mR^3$ in which the permittivity and the permeability are given by two $3\times 3$ matrices $\eps, \mu$ respectively.  Here and in what follows, for $r>0$, let $B_r$ denote the open ball in $\mR^3$ centered at the origin and of radius $r$. 
Through this paper, we assume that 
\begin{equation}\label{symetric}
\eps, \mu \mbox{ are real, symmetric},  
\end{equation}
and uniformly elliptic in $B_1$, i.e.,   for a.e.  $x\in B_1$ and for some $\Lambda  \ge 1$, 
\begin{equation}\label{elliptic}
\frac{1}{\Lambda}|\xi|^2 \le \langle \eps(x)\xi, \xi \rangle, \langle\mu(x)\xi, \xi\rangle \le \Lambda|\xi|^2\quad  \mbox{ for all } \, \xi \in \mR^3.
\end{equation}
We assume in addition that $\eps, \mu$ are piecewise $C^1$ in order to ensure the well-posedness of Maxwell's equations in the frequency domain (via the unique continuation principle). In the spirit of  the scheme in \cite{Kohn}, the permittivity and permeability of the cloaking region are given by
$$
(\eps_c, \mu_c) := ({F_{\rho}}_*I, {F_{\rho}}_*I ) \mbox{ in } B_{2} \setminus B_1, 
$$
where $F_{\rho}: \R^3 \rightarrow \R^3$ with $ \rho \in (0,  1/2)$ is defined by
\begin{equation*}F_{\rho} =\left\{ \begin{array}{cl} \dsp x &\text{ in } \R^3\setminus B_2, \\[6pt] 
\dsp \left( \frac{2 - 2\rho}{2-\rho} + \frac{|x|}{2-\rho} \right)\frac{x}{|x|} &\text{ in } B_2\setminus B_{\rho},\\[6pt]
\dsp \frac{x}{\rho} &\text{ in } B_{\rho}.
\end{array}\right. 
\end{equation*}
We denote 
\[
F_0(x) = \lim \limits_{\rho \rightarrow 0} F_{\rho}(x) \mbox{ for } x \in \mR^3. 
\]
As usual,  for a matrix $A \in \R^{3\times 3}$ and for a bi-Lipschitz homeomorphism $T$, the following notation is used:
$$
T_\ast A(y) = \frac{D T (x) A (x) D T^{T}(x)}{|\det D T(x)|} \mbox{ with } y = T(x).
$$
Assume that the medium is homogeneous outside the cloak and the cloaked region. In the presence of the cloaked object and the cloaking device, the medium in the whole space $\R^3$ is given by  $(\eps_c, \mu_c)$ which is defined as follows
\begin{equation}
\label{medium:cloak}
(\eps_c, \mu_c) = \left\{\begin{array}{cl} (I, I) & \mbox{ in } \mR^3 \setminus B_2, \\[6pt]
\big( {F_{\rho}}_*I, {F_{\rho}}_*I \big)  & \text{ in } B_2 \setminus  B_1,\\[6pt]
(\eps, \mu)  &\text{ in } B_{1}.
\end{array} \right. 
\end{equation}

With the cloak and the object, in the time harmonic regime of frequency $\omega>0$, the electromagnetic field generated by current $J \in [L^2(\mR^3)]^3$ is the unique (Silver-M\"uller) radiating solution $(E_c, H_c)\in [H_{\loc}(\curl, \R^3)]^2$ of the system
\begin{equation}
\label{equ:cloak}
\begin{cases}
\nabla \times E_c = i\omega \mu_c H_c  &\text{ in } \mathbb{R}^3,\\[6pt]
\nabla \times H_c = -i\omega \eps_c E_c + J &\text{ in } \mathbb{R}^3.
\end{cases}\end{equation}

For an open subset $U$ of $\mR^3$,  denote 
\[
H(\curl, U) := \Big\{\phi \in [L^2(U)]^3;  \;  \nabla \times \phi \in [L^2(U)]^3 \Big\}
\]
and 
\[
H_{\loc}(\curl, U) := \Big\{\phi \in [L_{\loc}^2(U)]^3; \;  \nabla \times \phi \in [L_{\loc}^2(U)]^3 \Big\}.
\]
Recall that, for $\omega> 0$, a solution $(E, H) \in [H_{\loc}(\curl, \R^3\setminus B_R)]^2$, for some $R> 0$, of the Maxwell equations 
\[
\begin{cases}
\nabla \times E = i \omega H  &\text{ in } \mathbb{R}^3\setminus B_R,\\[6pt]
\nabla \times H = -i \omega E  &\text{ in } \mathbb{R}^3\setminus B_R
\end{cases}
\]
is called radiating if it satisfies one of the (Silver-Muller) radiation conditions
\begin{equation}\label{SM-condition}
H \times x - |x| E = O(1/|x|) \quad   \mbox{ and } \quad  E\times x + |x| H = O(1/|x|) \mbox{ as } |x| \to + \infty. 
\end{equation}
Here and in what follows, for $\alpha \in \mR$, $O(|x|^\alpha)$ denotes a quantity whose norm is bounded by $C |x|^\alpha$ for some constant $C>0$.

Denote $J_{\ext}$ and $J_{\inte}$ the restriction of $J$ into $\mR^3 \setminus B_1$ and $B_1$ respectively. It is clear that 
\begin{equation}
J = \left\{\begin{array}{cl} J_{\ext} &\text{ in } \R^3\setminus B_1,\\[6pt]
J_{\inte} & \text{ in } B_1.
\end{array}\right.
\end{equation}
In the homogeneous medium (without the cloaking device and the cloaked object), the electromagnetic field  generated by $J_{\ext}$ is the unique (Silver-M\"uller) radiating solution $(E, H)\in [H_{\loc}(\curl, \R^3)]^2$ to the system 
\begin{align}
\label{equ:homo}
\begin{cases}
\nabla \times E = i\omega H  &\text{ in } \mathbb{R}^3,\\[6pt]
\nabla \times H = -i\omega E + J_{\ext}  &\text{ in } \mathbb{R}^3.
\end{cases}
\end{align}

We next introduce the functional space ${\mathcal N}$ which is related to the notion of resonance and  plays a  role in our analysis. 

\begin{definition} \label{def:1} Let $D$ be a smooth bounded subset of $\mR^3$ such that $\mR^3 \setminus D$ is connected.  Set 
 \[ {\mathcal N}(D): = \Big\{ (\bE, \bH) \in [H(\curl, D)]^2:  (\bE, \bH) \text{ satisfies the system } (\ref{def:resonance}) \Big\}, \]
where
\begin{equation}
\label{def:resonance}
\begin{cases} \nabla\times \bE = i\omega \mu \bH & \text{ in } D,\\[6pt]
 \nabla\times \bH = -i\omega \eps \bE &\text{ in } D, \\[6pt] 
 \nabla\times \bE \ \cdot \ \nu = \nabla\times \bH \ \cdot \ \nu = 0 &\text{ on } \partial D.
 \end{cases}
\end{equation} 
In the case $D  = B_1$, we simply denote ${\mathcal N}(B_1)$ by ${\mathcal N}$.   
\end{definition}


The notions of resonance and non-resonance  are  defined as follows: 

\begin{definition}  \label{def-R}The cloaking system \eqref{medium:cloak} is said to be non-resonant if ${\mathcal N} = \{(0, 0) \}$. Otherwise, the  cloaking system \eqref{medium:cloak} is called resonant. 

\end{definition}

%

%

Our main result in the non-resonance case is the following theorem. 
  
   \begin{theorem}
   \label{thm1}
   Let $\rho \in (0, 1/2)$,  $R_0> 2$, and  let $J \in L^2(\mR^3)$ be  such that $\operatorname{supp} J_{\ext} \subset \subset B_{R_0}\setminus B_2$.  Assume that system (\ref{medium:cloak}) is non-resonant. We have, for all $K\subset \subset \R^3\setminus \bar{B_1}$, 
 \begin{equation}\label{thm1-est1}
 \|(F_\rho^{-1}*E_c, F_\rho^{-1}*H_c) - (E, H)\|_{H(\curl, K)}\leq C\left(\rho^3 \|J_{\ext}\|_{L^2(B_{R_0}\setminus B_2)}+\rho^2\|J_{\inte}\|_{L^2(B_1)}\right),
\end{equation}
   for some positive constant $C$ depending only on $R_{0}, \omega, K, \mu, \eps$. Moreover, 
 \begin{equation}
 \label{thm1-CV}
\lim_{\rho \to 0} (E_c, H_c) =  Cl(0, J_{\inte}) \mbox{ in } [H(\curl, B_1)]^2,
\end{equation}
  where $Cl(0, J_{\inte})$ is defined in Definition \ref{limit-1}.
   \end{theorem}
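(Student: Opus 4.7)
My plan is to reduce the cloaking problem, via the defining pushforward structure of the cloak medium, to a Maxwell scattering problem on $\mathbb{R}^3$ with a small inclusion of radius $\rho$ at the origin, and then to exploit a low-frequency analysis on the rescaled inclusion where the non-resonance hypothesis $\mathcal{N} = \{(0,0)\}$ provides a $\rho$-uniform inverse.

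First I would pull back by $F_\rho$. Because $(\eps_c,\mu_c) = ({F_\rho}_* I,{F_\rho}_* I)$ on $B_2\setminus B_1$ and $F_\rho$ is the identity on $\mathbb{R}^3\setminus B_2$, the pair $(\tilde E,\tilde H) := (F_\rho^{-1}*E_c, F_\rho^{-1}*H_c)$ satisfies Maxwell's equations in the homogeneous medium $(I,I)$ on $\mathbb{R}^3\setminus\overline{B_\rho}$ (with source $J_{\ext}$) and is Silver-M\"uller outgoing. Inside $B_\rho$, since $F_\rho|_{B_\rho}$ is the dilation $x\mapsto x/\rho$, the pulled-back medium is the rescaled version of $(\eps,\mu)$ on $B_\rho$ and the source is the pullback of $J_{\inte}$. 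The target estimate thus reduces to an exterior bound for the scattering error caused by a small inclusion,
\[
\|(\tilde E - E,\tilde H - H)\|_{H(\curl,K)} \leq C\bigl(\rho^3\|J_{\ext}\|_{L^2(B_{R_0}\setminus B_2)} + \rho^2 \|J_{\inte}\|_{L^2(B_1)}\bigr).
\]

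Next I would rescale the interior by $y = x/\rho$, transporting the problem onto $B_1$ with the original $(\eps,\mu)$ at the vanishing frequency $\rho\omega$, coupled to the unchanged exterior by transmission conditions on $\partial B_1$. The crux is uniform well-posedness of this rescaled transmission problem for $\rho\in(0,1/2)$. I would use a Helmholtz decomposition of the interior fields into curl-free and divergence-free parts, together with Stokes' theorem to handle the fluxes and circulations coupling these two parts, following the strategy previewed at the end of the introduction. The limiting $\rho = 0$ interior system decouples from the exterior and is precisely \eqref{def:resonance}, so non-resonance provides the trivial kernel needed to run a Fredholm argument, yielding a uniform interior solution operator. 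Equipped with this bound, I would represent $(\tilde E - E, \tilde H - H)$ on $K$ via an outgoing integral representation with densities supported on $\partial B_\rho$, and expand in $\rho$: the monopole (zeroth-order) term vanishes by a Stokes/Helmholtz argument, so $J_{\ext}$ enters only at the dipole scale $\rho^3$, while the volume source $J_{\inte}$ loses one power under rescaling and therefore enters at order $\rho^2$.

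For the convergence \eqref{thm1-CV} inside $B_1$, I would pass to the limit in the rescaled interior problem: the uniform estimate yields a weak limit in $[H(\curl, B_1)]^2$, non-resonance identifies it uniquely with $Cl(0, J_{\inte})$, and an energy identity upgrades the convergence to strong. The main obstacle throughout is this uniform low-frequency well-posedness, since Maxwell is not coercive at zero frequency (the curl has a large kernel); extracting a $\rho$-uniform inverse requires combining the Helmholtz splitting, the circulation/flux identities from Stokes' theorem, and the full strength of the non-resonance hypothesis. Once that bound is in place, the small-inclusion asymptotics producing the sharp $\rho^3$ and $\rho^2$ rates amount to a careful but essentially routine Taylor expansion.
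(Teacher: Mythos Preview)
Your proposal is correct and follows essentially the same route as the paper: pull back by $F_\rho$ to a small-inclusion problem, rescale to $B_1$ at frequency $\rho\omega$, use the non-resonance hypothesis $\mathcal{N}=\{(0,0)\}$ to get a $\rho$-uniform a priori bound on the rescaled transmission problem, and then extract the $\rho^3$ and $\rho^2$ rates from a Stratton--Chu-type representation in which the leading boundary moments vanish. The paper's execution differs only in that it makes the argument concrete via an explicit three-field splitting---introducing an auxiliary exterior solution $(E_{1,\rho},H_{1,\rho})$ with a perfect-conductor condition on $\partial B_\rho$, then writing $(E,H)-(E_{1,\rho},H_{1,\rho})$ and $(\mathcal E_\rho,\mathcal H_\rho)-(E_{1,\rho},H_{1,\rho})$ separately---and proves the uniform bound (its Lemma~4.4) by a contradiction/compactness argument rather than a direct Fredholm construction; both approaches rest on the same limiting identification with system~\eqref{def:resonance}.
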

  
The notation $Cl (\cdot, \cdot)$ used in Theorem~\ref{thm1} is defined as follows. 
   
   \begin{definition} \label{limit-1} Assume that ${\mathcal N} = \{(0, 0)\}$. Let $\theta_1, \theta_2 \in [L^2(B_1)]^3$. Define $Cl(\theta_1, \theta_2)= (E_0, H_0)$ where $(E_0, H_0)\in [H(\curl, B_1)]^2$ is the unique solution to the system
   \begin{equation}\label{defcase1}
   \begin{cases}
   \nabla\times E_0 = i\omega\mu H_0 + \theta_1 &\text{ in } B_1,\\[6pt]
   \nabla\times H_0 = -i\omega\eps E_0 + \theta_2 &\text{ in } B_1,\\[6pt]
   \nabla\times E_0 \cdot \nu = \nabla\times H_0 \cdot \nu = 0 &\text{ on } \partial B_1. 
   \end{cases}
   \end{equation}
\end{definition}   
   
 \begin{remark} \rm  The existence and the uniqueness of $(E_0, H_0)$ are established  in Lemma \ref{lem:fredholm}.
 \end{remark} 

\begin{remark} In \cite{Weder2}, the conditions 
$$
 \nabla\times E_0 \cdot \nu |_{int} = \nabla\times H_0 \cdot \nu |_{int}= 0
$$
are also imposed on the boundary of the cloaked region. This is different from \cite{GKLU07-1} (see also \cite[page 459]{Lassas}), where the following boundary  conditions are imposed for solutions satisfying some integrability conditions, which are called finite energy solutions in \cite{GKLU07-1}, 
$$
E_0 \times  \nu|_{int} = H_0 \times  \nu|_{int} = 0. 
$$
\end{remark}  
  

  

The novelty of Theorem~\ref{thm1} relies on the fact that no lossy layer is required. The result holds for a general class of pair $(\eps, \mu)$. Applying Theorem~\ref{thm1} to the case where a fixed lossy-layer is used, one obtains that the degree of visibility  is of the order  $\rho^3$ which is better than $\rho^2$ as established previously in \cite{Bao} for the case $J_{\inte} \equiv 0$. In contrast with  \cite{Bao, Ammari13, DLU}, in Theorem~\ref{thm1}, the estimate of visibility  is considered up to the cloaked region and the behavior of the electromagnetic fields are established inside the cloaked region.

\medskip 

We next consider the resonance case. We begin with the compatible case, i.e., \eqref{passive} below holds.

   \begin{theorem}
   \label{thm1.1} 
  Let $\rho \in (0, 1/2)$,  $R_0> 2$, and $J \in [L^2(\mR^3)]^3$ be  such that $\operatorname{supp} J_{\ext} \subset \subset B_{R_0}\setminus B_2$.  Assume that  system (\ref{medium:cloak}) is resonant and the following compatibility condition {\bf holds}: 
    \begin{equation}
   \label{passive}
   \int_{B_1}J_{\inte}\cdot\bar{\bE}\,dx = 0 \quad \mbox{ for all } (\bE, \bH) \in {\mathcal N}. 
   \end{equation}
We have, for all $K\subset \subset \R^3\setminus \bar{B_1}$, 
 \begin{equation}\label{thm1.1-est1}
 \|(F_\rho^{-1}*E_c, F_\rho^{-1}*H_c) - (E, H)\|_{H(\curl, K)} \leq C \Big(\rho^3 \|J_{\ext}\|_{L^2(B_{R_0}\setminus B_2)}+\rho^2\|J_{\inte}\|_{L^2(B_1)} \Big), 
 \end{equation}
   for some positive constant $C$ depending only on $R_{0}, \omega, K, \mu$, and $\eps$. Moreover, 
\begin{equation}\label{thm1.1-CV}
  \lim_{\rho \to 0} (E_c, H_c) =  Cl(0, J_{\inte}) \mbox{  in } [H(\curl, B_1)]^2,  
\end{equation}
where $Cl(0, J_{\inte})$ is defined in Definition \ref{limit-2}.
  \end{theorem}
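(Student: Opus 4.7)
The plan is to reduce the problem to a small-inclusion analysis by the transformation-optics pullback $F_\rho^{-1}$, and then to upgrade the non-resonant argument of Theorem~\ref{thm1} by using the compatibility \eqref{passive} to recover a uniform a priori bound in the presence of the nontrivial kernel $\mathcal N$. Setting $\tilde E_c := (F_\rho^{-1})^* E_c$ and $\tilde H_c := (F_\rho^{-1})^* H_c$ on $\mR^3 \setminus \bar B_\rho$, the pushforward invariance of the Maxwell system together with \eqref{medium:cloak} gives
\begin{equation*}
\nabla \times \tilde E_c = i\omega \tilde H_c, \qquad \nabla \times \tilde H_c = -i\omega \tilde E_c + J_{\ext} \qquad \text{in } \mR^3 \setminus \bar B_\rho,
\end{equation*}
with the standard tangential-trace transmission on $\partial B_\rho$. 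Rescaling $B_\rho$ back to $B_1$ via $y = x/\rho$, the interior problem becomes the Maxwell system with the fixed coefficients $(\eps, \mu)$ on $B_1$, coupled to the exterior through $\rho$-dependent boundary data. The two main tasks are to derive the exterior rate by expanding the field in electromagnetic moments of the inclusion, and to obtain a uniform-in-$\rho$ bound on the interior part; this second task is where resonance and \eqref{passive} play the decisive role.

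For the exterior estimate I would work with $(\hat E, \hat H) := (\tilde E_c - E, \tilde H_c - H)$, which is radiating and satisfies the free Maxwell system in $\mR^3 \setminus \bar B_\rho$ with no source. Using the full-space electromagnetic Green's tensor together with the Stratton--Chu representation, and Taylor-expanding the kernel in $\rho$, the field in any compact $K \subset\subset \mR^3\setminus \bar B_1$ is controlled by the low-order electromagnetic moments on $\partial B_\rho$. The dipole term, modulated by the incident field $E$ at the origin, produces the contribution of order $\rho^3 \|J_{\ext}\|_{L^2}$. A source $J_{\inte}$ with unit $L^2(B_1)$-norm, after unfolding $x = \rho y$, produces boundary moments on $\partial B_\rho$ of size $\rho^2$, which supplies the second term of \eqref{thm1.1-est1}. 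The Helmholtz decomposition and Stokes' theorem, as emphasised in the introduction, are needed here to separate the gradient and divergence-free pieces of the interior field and to express the relevant moments in a form compatible with the Green's tensor expansion.

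The key obstacle is the uniform-in-$\rho$ bound on the rescaled interior fields, which in the non-resonant case followed from coercivity but here fails at first sight because $\mathcal N \neq \{(0, 0)\}$. I would argue by contradiction: along a hypothetical sequence $\rho_n \to 0$ on which the normalised interior fields have unit $H(\curl, B_1)$-norm, $H(\curl)$-compactness combined with the Helmholtz decomposition adapted to the weight $\eps$ extracts a weak limit $(\bE, \bH)$; passing to the limit in the rescaled interior equations places $(\bE, \bH)$ into $\mathcal N$. Testing these equations against $(\bE, \bH)$ and using that the exterior counterparts are bounded, so that the boundary integrals on $\partial B_\rho$ vanish in the limit, yields
\begin{equation*}
\int_{B_1} J_{\inte}\cdot \bar{\bE}\, dx = 0,
\end{equation*}
which, after quotienting by $\mathcal N$ in the sense fixed by Definition~\ref{limit-2} and together with \eqref{passive}, forces $(\bE, \bH) = (0, 0)$ and contradicts the normalization. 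With this uniform bound secured, passing to the limit in the interior system identifies the limit as the distinguished solution $Cl(0, J_{\inte})$ prescribed by Definition~\ref{limit-2}, yielding \eqref{thm1.1-CV}; combining this interior bound with the moment expansion of the previous paragraph then produces \eqref{thm1.1-est1}.
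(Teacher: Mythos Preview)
Your overall reduction (pullback by $F_\rho^{-1}$, rescaling the inclusion to unit size, and a contradiction argument for the uniform interior bound) matches the paper's strategy. However, the heart of your contradiction argument contains a genuine gap.

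You extract a limit $(\bE,\bH)\in\mathcal N$ and then derive $\int_{B_1}J_{\inte}\cdot\bar\bE\,dx=0$, claiming that ``together with \eqref{passive}'' this forces $(\bE,\bH)=(0,0)$. But \eqref{passive} already asserts that this integral vanishes for \emph{every} element of $\mathcal N$; deriving it again for the particular limit gives no new information and cannot rule out a nonzero $(\bE,\bH)\in\mathcal N$. The phrase ``quotienting by $\mathcal N$'' does not repair this: the normalisation is on the full field, not on its $\mathcal N^\perp$-component, so the limit being a nonzero element of $\mathcal N$ is perfectly consistent with your setup.

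The paper's mechanism is different and more delicate. The role of the compatibility condition is not to kill the limit directly, but (via the Fredholm alternative, Lemma~\ref{lem:fredholm}) to guarantee the existence of a solution $(E_{1,n},H_{1,n})\in\mathcal N^\perp$ to the inhomogeneous interior problem with data $\theta_n$. One then subtracts this, projects the remainder onto $\mathcal N$, and rescales the residual $\mathcal N^\perp$-component by $\rho_n^{-1}$ to produce a second limit $(\tilde E,\tilde H)\in\mathcal N^\perp$. This pair satisfies the interior Maxwell system with boundary data $\eps\tilde E\cdot\nu=E\cdot\nu|_{\ext}$ and $\mu\tilde H\cdot\nu=H\cdot\nu|_{\ext}$, where $(E,H)$ is the limit of the full fields in all of $\mR^3$ (not just $B_1$), satisfying the static exterior system with $O(|x|^{-2})$ decay. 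The contradiction then comes from the coupled uniqueness result Lemma~\ref{lem:uniqueness}, which is exactly the non-local structure encoded in Definition~\ref{limit-2}. Your sketch omits both the two-scale construction of $(\tilde E,\tilde H)$ and any analogue of Lemma~\ref{lem:uniqueness}; without them the argument does not close.
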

  
  In Theorem~\ref{thm1.1}, we use the following notion: 
  
\begin{definition} \label{limit-2}  Assume that ${\mathcal N} \neq  \{(0, 0)\}$.  Let $\theta_1, \theta_2 \in [L^2(B_1)]^3$ be such that 
\begin{equation}
\int_{B_1} \big(\theta_2 \cdot\bar{\bE} - \theta_1 \cdot \bar{\bH} \big) \,dx = 0 \quad \mbox{ for all } (\bE, \bH) \in {\mathcal N}. 
\end{equation}
Let  $(E_0, H_0, E^{\perp}, H^{\perp})\in [H_{\loc}(\curl, \R^3)]^2\times \mathcal N^{\perp}$ be the unique solution of the following systems
   \begin{equation}\label{defcase2.1}
  \begin{cases}\nabla\times E_0 = \nabla\times H_0 = 0 & \text{ in } \R^3\setminus B_1,\\[6pt]
  \dive E_0 = \dive H_0 = 0 & \text{ in } \R^3\setminus B_1,\\[6pt]
  \nabla\times E_0 = i\omega\mu H_0 + \theta_1 &\text{ in } B_1,\\[6pt]
  \nabla\times H_0 = -i\omega\eps E_0 + \theta_2 &\text{ in } B_1,
  \end{cases} \quad \mbox{ and } \quad 
  \begin{cases}
  \nabla\times E^{\perp} = i\omega\mu H^{\perp} &\text{ in } B_1,\\[6pt]
  \nabla\times H^{\perp} = -i\omega\eps E^{\perp} &\text{ in } B_1,\\[6pt]
  \eps E^{\perp}\cdot \nu = E_0\cdot \nu|_{\ext}  &\text{ on } \partial B_1,\\[6pt]
  \mu H^{\perp}\cdot \nu = H_0\cdot \nu|_{\ext} &\text{ on } \partial B_1.
  \end{cases}\end{equation}
such that
\[\big|\big(E_0(x), H_0(x)\big)\big| = O(|x|^{-2}) \mbox{ for large $|x|$}.\]
Denote $Cl(\theta_1, \theta_2)$ the restriction of $(E_0, H_0)$ in $B_1$.
\end{definition}


Here and in what follows,  $\mathcal N(D)^\perp$ denotes the orthogonal space of $\mathcal N(D)$ with respect to the standard scalar product in $[L^2(D)]^6$. The uniqueness and the existence of $(E_0, H_0, E^{\perp}, H^{\perp})$ are given in Lemmas~\ref{lem:uniqueness} and  \ref{lem:jump1}. 

\medskip 
  
In Definition~\ref{limit-2}, $(E_0, H_0)$ is determined by a non-local structure \eqref{defcase2.1}. This is new to our knowledge.  

\medskip 

In the incompatible case, we have
   
    \begin{theorem}
   \label{thm1.2}  Let $\rho \in (0, 1/2)$,  $R_0> 2$, and $J \in [L^2(\mR^3)]^3$ be  such that $\operatorname{supp} J_{\ext} \subset \subset B_{R_0}\setminus B_2$.  Assume that system (\ref{medium:cloak}) is resonant and the compatibility condition does {\bf not} hold, i.e.,      \begin{equation}
   \label{N-passive}
   \int_{B_1}J_{\inte}\cdot\bar{\bE}\,dx \neq  0 \quad \mbox{ for some } (\bE, \bH) \in {\mathcal N}. 
   \end{equation}
We have, for all $K\subset \subset \R^3\setminus \bar{B_1}$,
\begin{equation}\label{thm1.2-est1}
(F_\rho^{-1}*E_c, F_\rho^{-1}*H_c) - (E, H)\|_{H(\curl, K)} \leq C \Big(\rho^3 \|J_{\ext}\|_{L^2(B_{R_0}\setminus B_2)}+\rho\|J_{\inte}\|_{L^2(B_1)} \Big)
\end{equation}
and
   \begin{equation} \label{explosion}\liminf \limits_{\rho\rightarrow 0}  \rho\| \big(E_c, H_c \big)\|_{L^2(B_1)} > 0.
   \end{equation}

   \end{theorem}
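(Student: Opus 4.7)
The plan is to deduce both \eqref{thm1.2-est1} and \eqref{explosion} from a Green-type identity paired with the pull-back reduction $(\hat E_c,\hat H_c):=(F_\rho^{-1}*E_c,F_\rho^{-1}*H_c)$, which turns the cloak into vacuum outside a ball $B_\rho$ enclosing a rescaled scatterer. Testing $\nabla\times H_c=-i\omega\eps E_c+J_{\inte}$ against $\bar{\bE}$ and $\nabla\times E_c=i\omega\mu H_c$ against $\bar{\bH}$ for an arbitrary $(\bE,\bH)\in\mathcal N$, integrating by parts in $B_1$ via $\int_D(\nabla\times u)\cdot v=\int_D u\cdot(\nabla\times v)+\int_{\partial D}(\nu\times u)\cdot v$, and invoking $\nabla\times\bar{\bE}=-i\omega\mu\bar{\bH}$, $\nabla\times\bar{\bH}=i\omega\eps\bar{\bE}$ from \eqref{def:resonance}, the bulk terms cancel and I obtain
\begin{equation*}
\int_{B_1}J_{\inte}\cdot\bar{\bE}\,dx
=\int_{\partial B_1}\bigl[(\nu\times H_c)\cdot\bar{\bE}-(\nu\times E_c)\cdot\bar{\bH}\bigr]\,d\sigma. \qquad(\star)
\end{equation*}
By \eqref{N-passive}, for a suitable $(\bE,\bH)\in\mathcal N$ the left-hand side of $(\star)$ is a fixed nonzero constant $c_0$; this drives both parts of the argument.

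To prove \eqref{explosion} I argue by contradiction. Suppose that along some sequence $\rho_n\to 0$ one has $\rho_n\|(E_{c,n},H_{c,n})\|_{L^2(B_1)}\to 0$; then also $\rho_n\|(E_{c,n},H_{c,n})\|_{H(\curl,B_1)}\to 0$, since $\nabla\times E_c$ and $\nabla\times H_c$ are controlled in $L^2$ by $E_c,H_c$ and $J_{\inte}$. The tangential traces of $E_c,H_c$ on $\partial B_1$ from inside equal those from outside by the transmission conditions, and from outside they are related to the pulled-back traces of $\hat E_c,\hat H_c$ on $\partial B_\rho$ through the differential $DF_\rho$, whose tangential eigenvalues on $\partial B_\rho$ equal $1/\rho$. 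Hence the tangential trace of $E_c$ on $\partial B_1$ carries an extra factor of $\rho$ relative to that of $\hat E_c$ on $\partial B_\rho$. An $H(\curl)$-trace inequality controls the latter from the interior by the $H(\curl,B_\rho)$-norm of $\hat E_c$, which by the rescaling $\hat E_c(x)=\rho^{-1}E_c(x/\rho)$ is comparable to $\|(E_c,H_c)\|_{H(\curl,B_1)}$, and from the exterior by an $O(1)$ vacuum-Maxwell estimate driven by $J_{\ext}$ and $J_{\inte}$. Substituting into $(\star)$ yields
\[
|c_0|\le C\rho\,\|(E_c,H_c)\|_{H(\curl,B_1)}+C\rho\bigl(\|J_{\ext}\|_{L^2}+\|J_{\inte}\|_{L^2}\bigr),
\]
and the right-hand side vanishes along $\rho_n$ by the contradiction assumption, a contradiction.

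The visibility estimate \eqref{thm1.2-est1} adapts the analysis of Theorem~\ref{thm1.1}, isolating the resonance mode. Decompose $(E_c,H_c)|_{B_1}=\sum_j\alpha_\rho^{(j)}(\bE^{(j)},\bH^{(j)})+(E_c^\perp,H_c^\perp)$ with $\{(\bE^{(j)},\bH^{(j)})\}$ an $L^2$-basis of $\mathcal N$ and $(E_c^\perp,H_c^\perp)\in\mathcal N^\perp$. The $\mathcal N^\perp$-component solves a non-degenerate Fredholm problem uniformly in $\rho$ and by the compatible-case argument of Theorem~\ref{thm1.1} contributes $O(\rho^3\|J_{\ext}\|+\rho^2\|J_{\inte}\|)$ to the visibility. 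The scalar coefficients $\alpha_\rho^{(j)}$ are controlled through $(\star)$ and the boundary estimate of the previous paragraph to satisfy $|\alpha_\rho^{(j)}|\lesssim 1/\rho$, consistent with the sharp blow-up rate of \eqref{explosion}. The far-field effect of the resonant piece is then computed in the pulled-back picture: its $O(|\alpha_\rho|)$ tangential data on $\partial B_1$, transported through the $1/\rho$-tangential scaling of $F_\rho$ and propagated by the exterior vacuum Green's tensor, contributes at order $\rho\|J_{\inte}\|$ to the visibility on any $K\subset\subset\R^3\setminus\bar B_1$.

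The main technical obstacle is the boundary-trace estimate used in both steps. Since $DF_\rho$ has radial eigenvalue $O(1)$ but tangential eigenvalues $1/\rho$ on $\partial B_\rho$, the normal and tangential components of traces scale very differently; their $\rho$-uniform control requires a Helmholtz-type decomposition on $\partial B_\rho$ together with the $H(\curl)$ trace theorem, and is also the reason Stokes' theorem appears in the Maxwell setting as mentioned in the introduction. A further subtlety is the uniform-in-$\rho$ stability of the $\mathcal N^\perp$-decomposition, which follows from Fredholm theory for the limiting operator plus lower-order compactness of source-induced terms.
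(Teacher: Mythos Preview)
Your identity $(\star)$ is exactly the right starting point and coincides with the paper's key equation. The gap is in how you extract the factor of $\rho$ from the right-hand side of $(\star)$ under the contradiction hypothesis. Your mechanism --- that the tangential eigenvalues of $DF_\rho$ on $\partial B_\rho$ are $1/\rho$, so the tangential trace of $E_c$ on $\partial B_1$ ``carries an extra factor of $\rho$'' --- does not give the bound you claim. The tangential trace of $E_c$ on $\partial B_1$ is controlled, via the interior $H(\curl)$ trace, by $\|E_c\|_{H(\curl,B_1)}$ with no $\rho$; going through the transmission to the exterior and pulling back by $F_\rho^{-1}$ returns the \emph{same} trace, so nothing is gained. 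Your alternative route through $\partial B_\rho$ also breaks: the $H(\curl,B_\rho)$ norm of $\pbE_\rho$ is not comparable to $\|E_c\|_{H(\curl,B_1)}$ (the $L^2$ and curl parts scale as $\rho^{1/2}$ and $\rho^{-1/2}$ respectively), the trace constant on $B_\rho$ is not uniform in $\rho$, and the trace on $\partial B_\rho$ from the exterior is not independently bounded by the sources because it sees the inclusion. In short, the claimed inequality $|c_0|\le C\rho\|(E_c,H_c)\|_{H(\curl,B_1)}+C\rho(\dots)$ is not established.

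The correct mechanism, which you are missing, exploits the structure of $\mathcal N$ rather than scaling of $DF_\rho$. Because $(\bE,\bH)\in\mathcal N$ satisfies $\nabla\times\bH\cdot\nu=0$ on $\partial B_1$, one can write $\bH\times\nu=\nabla\xi\times\nu$ for some $\xi\in H^1(B_1)$ (Stokes-type potential), and then
\[
\int_{\partial B_1}(\nu\times E_c)\cdot\bar\bH\,d\sigma=\int_{\partial B_1}\dive_\Gamma(\nu\times E_c)\,\bar\xi\,d\sigma=-\int_{\partial B_1}(\nabla\times E_c\cdot\nu)\,\bar\xi\,d\sigma.
\]
Now the factor of $\rho$ comes from the \emph{low-frequency exterior equation}: after the rescaling $\hat v(\cdot)=\rho\,\pbE_\rho(\rho\,\cdot)$, one has $\nabla\times\hat E_c=i\omega\rho\,\hat H_c$ in $\R^3\setminus B_1$, whence $\nabla\times E_c\cdot\nu=i\omega\rho\,\hat H_c\cdot\nu|_{\ext}$ on $\partial B_1$. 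Under the contradiction hypothesis $\rho_n\|(E_c,H_c)\|_{L^2(B_1)}\to 0$, one first propagates this to $\rho_n\|(\hat E_c,\hat H_c)\|_{L^2(B_2\setminus B_1)}\to 0$ via an exterior stability estimate, and hence $\rho_n\|\hat H_c\cdot\nu|_{\ext}\|_{H^{-1/2}}\to 0$; combined with the displayed identity this forces the boundary term to vanish, contradicting $(\star)$. The paper's proof of \eqref{explosion} follows exactly this route.

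For the visibility estimate \eqref{thm1.2-est1}, your $\mathcal N\oplus\mathcal N^\perp$ decomposition is not set up correctly: the $\mathcal N^\perp$-part of $(E_c,H_c)|_{B_1}$ does not satisfy the transmission conditions on $\partial B_1$, so it is not itself a solution to a ``compatible'' problem and Theorem~\ref{thm1.1} cannot be applied to it. The paper avoids this entirely: it proves a single a~priori estimate (Lemma~4.7, \eqref{jump-eq-p2}) valid without compatibility but with a $\rho^{-1}$ loss on the interior source, and then combines it with the same far-field decay lemma used in Theorems~\ref{thm1} and \ref{thm1.1}. This directly yields the $\rho\|J_{\inte}\|$ term in \eqref{thm1.2-est1} with no mode-by-mode decomposition.
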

   
Some comments on Theorems~\ref{thm1.1} and \ref{thm1.2} are in order.  Theorems~\ref{thm1.1} and \ref{thm1.2} imply in particular that cloaking is achieved even in the resonance case. Moreover, without any source in the cloaked region, one can achieve the same degree of visibility as in the non-resonant case considered in Theorem~\ref{thm1}. Nevertheless, the degree of visibility varies and depends on the compatibility of the source inside the cloaked region. More precisely,   the rate of the convergence of $(E_c, H_c) - (E, H)$ outside $\bar B_1$ in the compatible  case is of the order $\rho^2$ which is better than the incompatible resonant case where an estimate of the order $\rho$ is obtained. The rate of the convergence is optimal  and  discussed in Section~\ref{sect-opt}. By  \eqref{explosion}, the energy inside the cloaked region blows up at least with the rate $1/ \rho$ as $\rho \to 0$ in the incompatible case. 

\medskip 


We now describe briefly the ideas of the proofs of Theorems~\ref{thm1}, \ref{thm1.1} and \ref{thm1.2}.
Set 
\begin{equation}\label{def-EHrho}
  (\pbE_{\rho}, \pbH_{\rho}) = (F_{\rho}^{-1}*E_c, \, F_{\rho}^{-1}*  H_c) \quad  \mbox{ in } \R^3. 
\end{equation}
It follows from  a standard  change of variables formula (see, e.g., Lemma ~\ref{pre:cha}) that 
 $(\pbE_\rho, \pbH_\rho) \in [H_{\loc}(\curl, \R^3)]^2$ is the unique (Silver-M\"uller) radiating solution to
   \begin{align}
   \label{equ:aux1}
   \begin{cases}
   \nabla \times \pbE_{\rho} = i\omega \mu_{\rho} \pbH_{\rho}  &\text{ in } \mathbb{R}^3,\\[6pt]
   \nabla \times \pbH_{\rho} = -i\omega \eps_{\rho} \pbE_{\rho} + J_\rho &\text{ in } \mathbb{R}^3,
   \end{cases}
   \end{align}
   where
\begin{equation}\label{epsmurho}
   \big(\eps_{\rho}, \mu_{\rho} \big) = \big( { F_\rho^{-1}}_*\eps_c, {F_\rho^{-1}}_*\mu_c \big) = \left\{\begin{array}{cl} \big(I, I \big) &\text{ in } \mathbb{R}^3\setminus B_{\rho},\\[6pt]
  \big( \rho^{-1}\eps(\cdot / \rho), \rho^{-1} \mu (\cdot/ \rho) \big) &\text{ in } B_{\rho},\end{array}\right.
\end{equation}
   and 
\begin{equation}\label{Jrho}
 J_\rho =  \left\{ \begin{array}{cl}J_{\ext} &\text{ in } \R^3\setminus B_2,\\[6pt] 
\dsp    \rho^{-2}J_{\inte}(\cdot/ \rho) &\text{ in } B_{\rho}, \\[6pt]
   0 & \text{ otherwise}. \end{array}\right.
\end{equation}
We can then derive Theorems~\ref{thm1}, \ref{thm1.1}, and \ref{thm1.2} by studying the difference between $(\pbE_\rho, \pbH_\rho)$ and $(E, H)$ in $\mR^3 \setminus B_1$ and the behavior of $(\pbE_\rho, \pbH_\rho)(\rho \cdot )$ in $B_1$. 
It is well-known that when material parameters inside a small inclusion are bounded from below and above by positive constants, the effect of the small inclusion is small (see, e.g., \cite{VogeliusVolkov, AVV}). Without this assumption, the effect of the inclusion might not be  small (see, e.g.,  \cite{Kohn, HM1}) unless there is an appropriate lossy-layer, see  \cite{Bao, Ammari13, DLU}. In our setting, the boundedness assumption is violated (see \eqref{epsmurho}) and no lossy-layer is used. Nevertheless, the effect of the small inclusion is still small  due to the special structure induced from \eqref{epsmurho}. 

It is worth noting that System \eqref{defcase1}, which involves in  the definition of resonance and non-resonance, and the condition of compatibility \eqref{passive},  appears very naturally in our context. Indeed, note that if $(E_c, H_c)$ is bounded in $[H(\curl, B_1)]^2$, one can check that, up to a subsequence,  $(\rho \pbE_\rho, \rho  \pbH_\rho)(\rho \cdot) = (E_c, H_c)$ converges weakly in $[H(\curl, B_1)]^2$ to $(E_0, H_0)$ which satisfies  system \eqref{defcase1} with $(\theta_1, \theta_2) = (0, J)$. 

\medskip 
The paper is organized as follows. In Section \ref{sec-pre}, we establish some basic facts   and 
 recall some known results related to Maxwell's equations. 
These materials will be used in the proofs of Theorems \ref{thm1}, \ref{thm1.1}, and \ref{thm1.2}.   The proofs of 
Theorems \ref{thm1}, \ref{thm1.1}, and \ref{thm1.2} are given in Section \ref{sec-main}.  Finally, in Section \ref{sect-opt}, we  discuss the optimality of the convergence rate in Theorems \ref{thm1}, \ref{thm1.1}, and \ref{thm1.2}.

 \section{Preliminaries}\label{sec-pre}
   In this section, we establish some basic facts and recall some known results related to Maxwell's equations that will be repeatedly used in the proofs of Theorems \ref{thm1}, \ref{thm1.1}, and \ref{thm1.2}.  In what follows in this section, $D$ denotes a smooth  bounded open subset of $\R^3$ and on its boundary  $\nu$ denotes its normal unit vector directed to the exterior.   We begin with  a variant of the classic Stokes' theorem for an exterior domain.

\begin{lemma}\label{lem:potential}
Assume that $\R^3\setminus D$ is simply connected and let  $u\in H_{\loc}(\curl, \R^3\setminus D)$ be such that 
\begin{equation}\label{lem-P-1}
\nabla\times u = 0 \mbox{ in } \R^3\setminus D \quad \mbox{ and }  |u(x)| =  O(|x|^{-2}) \mbox{ for large } |x|.  
\end{equation}
There exists $\xi \in H^1_{\loc}(\R^3\setminus D)$ such that 
\begin{equation}\label{lem-P-2}
\nabla \xi = u \mbox{ in } \R^3\setminus D \quad \mbox{ and } \quad 
|\xi(x)| = O(|x|^{-1}) \mbox{ for large }  |x|. 
\end{equation}
\end{lemma}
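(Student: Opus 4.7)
My plan is to build $\xi$ in two stages: first produce some $H^1_{\loc}$-primitive $\eta$ of $u$ on all of $\R^3\setminus D$, and then fix its additive constant by subtracting its value at infinity so as to secure the pointwise decay in \eqref{lem-P-2}. For the first stage I would invoke the $L^2$ version of the Poincar\'e lemma: because $\R^3\setminus D$ is simply connected (and Lipschitz) and $\nabla\times u=0$ distributionally, there exists $\eta\in H^1_{\loc}(\R^3\setminus D)$ with $\nabla\eta=u$. A standard way to establish this is to fix a locally finite cover of $\R^3\setminus D$ by simply connected Lipschitz charts, obtain a smooth primitive on each chart by mollifying $u$ and applying the classical Poincar\'e lemma, pass to the $H^1_{\loc}$-limit, and glue the local primitives by additive constants using the connectedness of the overlaps together with simple connectivity. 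Any two such $\eta$'s differ by a global constant since $\R^3\setminus D$ is connected.

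For the second stage, I fix $R$ with $\overline D\subset B_R$. The hypothesis $|u(x)|=O(|x|^{-2})$ gives $|\nabla\eta|=|u|\le C|x|^{-2}$ a.e.\ on $\R^3\setminus\overline{B_R}$; in particular $\nabla\eta\in L^\infty$ there, so $\eta$ admits a Lipschitz representative on that exterior and line integrals of $\nabla\eta$ along curves in $\R^3\setminus\overline{B_R}$ are classical. Given $x,y$ outside $B_{2R}$, I would connect them by a radial segment from $x$ out to radius $T$, an arc on the sphere of radius $T$, and a radial segment inward to $y$. The two radial legs contribute $O(1/|x|)+O(1/|y|)$ to $\eta(y)-\eta(x)$ (integrate $C s^{-2}$ in the radial variable) while the spherical arc contributes $O(1/T)$ (the arc length is $O(T)$ but $|\nabla\eta|=O(T^{-2})$ on it). Sending $T\to\infty$ shows that $\eta$ has a single finite limit $c$ at infinity independent of direction, and taking $y\to\infty$ along the outward ray through $x$ yields $|\eta(x)-c|\le C/|x|$ for $|x|\ge 2R$. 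Setting $\xi:=\eta-c$ on $\R^3\setminus D$ then provides the required primitive: $\nabla\xi=u$, $\xi\in H^1_{\loc}(\R^3\setminus D)$, and $|\xi(x)|=O(|x|^{-1})$.

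The main obstacle is the first stage, since $u$ is only $L^2_{\loc}$ and the simple-connectivity argument has to be run in a weak-regularity setting rather than by naive pointwise line integration along paths from a fixed basepoint; care is needed in the patching step to make sure the constants are consistent across all overlaps in the globally simply connected exterior domain. Once $\eta$ is in hand, the decay step is essentially a one-dimensional fundamental theorem of calculus, made rigorous by the fact that $|\nabla\eta|$ is genuinely bounded (not just square-integrable) outside a large ball; note also that the same bound $|\xi(x)|\le C/|x|$ can equivalently be obtained from the explicit radial formula $\xi(x)=-\int_1^\infty u(tx)\cdot x\,dt$ on $\R^3\setminus\overline{B_R}$, which agrees with $\eta-c$ up to a constant by uniqueness of the primitive on the connected exterior.
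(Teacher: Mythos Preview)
Your two-stage outline is correct and matches the paper's argument closely: construct some $H^1_{\loc}$ primitive $\eta$ with $\nabla\eta=u$, then subtract its limit at infinity. Stage~2 is essentially identical to the paper's (the paper uses a spherical arc at radius $|y|$ followed by a radial segment, you go out to radius $T$ and back and let $T\to\infty$; both give $|\eta(x)-\eta(y)|\le C/|x|+C/|y|$).

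The only real difference is in Stage~1, which you flag as the main obstacle. The paper bypasses your chart-by-chart mollification and patching entirely: it invokes \cite[Theorem~2.9]{Girault} on each bounded domain $B_n\setminus D$ to get $\eta_n\in H^1(B_n\setminus D)$ with $\nabla\eta_n=u$, normalizes by $\int_{\partial B_2}\eta_n=0$, observes that $\eta_m=\eta_n$ on $B_n\setminus D$ for $m>n$ (they differ by a constant and share the normalization), and sets $\eta=\lim_n\eta_n$. This is cleaner than building a global primitive from local patches, since the gluing is a single nested sequence rather than a \v{C}ech-type consistency argument across overlapping charts. Your explicit radial formula $\xi(x)=-\int_1^\infty u(tx)\cdot x\,dt$ on the exterior of a large ball is a nice alternative route to the decay, not used in the paper.
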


\begin{proof} By \cite[Theorem 2.9]{Girault}, there 
exists $\eta_n \in H^1(B_{n} \setminus D)$ for large $n$ such that 
$$
\nabla \eta_n = u \mbox{ in } B_n \setminus D \quad \mbox{ and } \quad \int_{\partial B_2} \eta_n = 0.  
$$
It follows that, for $m > n$ large, 
$$
\eta_m =  \eta_n \mbox{ in } B_{n} \setminus D. 
$$
Let $\eta$ be the limit of $\eta_n$ as $n \to + \infty$.  
Then   $\eta \in  H^1_{\loc}(\R^3 \setminus D)$ and
    	\[\nabla \eta = u \mbox{ in } \R^3 \setminus D.\]
Fix $x, y \in \R^3$ large enough with $|y| > |x|$ and denote $\hat x = x/ |x|$ and $\hat y = y/ |y|$. Using \eqref{lem-P-1},  we have, by the fundamental theorem of calculus, 
\begin{equation}
|\eta (x) - \eta(y)| \le    \big|\eta( |y| \hat{y}) - \eta( |y| \hat{x}) \big|+ \big|\eta(|y|\hat{x}) - \eta(|x|\hat{x}) \big|  \le  \frac{C}{|y|} + \int_{|x|}^{|y|} \frac{C}{|r|^2} \, d r
\end{equation}   
for some positive constant $C$ independent of $x$ and $y$. It follows that 
\begin{equation}\label{lem-P-4}
|\eta (x) - \eta(y)| \le  \frac{C}{|y|} +  \frac{C}{|x|}. 
\end{equation}   
Hence   $\lim\limits_{|x|\rightarrow \infty}\eta(x)$ exists. Denote this limit  by $\eta_{\infty}$. 
By letting $|y| \to + \infty$  in \eqref{lem-P-4}, we obtain
\[|\eta(x) - \eta_{\infty}| \leq \frac{C}{|x|}, \mbox{ for $|x|$ large enough. }\]
The conclusion follows with $\xi = \eta - \eta_{\infty}$.  
\end{proof}

Let $U$ be a smooth open subset of $\mR^3$. Denote 
\[
 H(\dive, U) := \big\{\phi \in [L^2(U)]^3: \dive \phi \in L^2(U) \big\}. 
 \]
Concerning a free divergent field in a bounded domain, one has the following result which is related to Stokes' theorem,  see, e.g.,  \cite[Theorems 3.4 and  3.6]{Girault}. 

\begin{lemma}
	\label{potential}
	\label{lem:stream}
Assume that $D$ is simply connected and  let $u\in H(\dive, D)$ be such that 
\begin{equation}
\dive u = 0 \mbox{ in } D \quad \mbox{ and } \quad \int_{\Gamma_i }u \cdot \nu = 0 \mbox{ for all connected component $\Gamma_i$ of $\partial D$.}
\end{equation}
 There exists  $\phi \in [H^1(D)]^3$ such that 
	\[\nabla\times \phi = u \text{ in } D \mbox{ and } \dive \phi = 0 \mbox{ in } D.\]
Assume in addition that $u\cdot \nu = 0 \mbox{ on } \partial D$. Then $\phi$ can be chosen such that
\[
\phi \times \nu = 0  \text{ on } \partial D \quad \mbox{ and } \quad \int_{\Gamma_i } \phi \cdot \nu  = 0
 \mbox{ for all connected component $\Gamma_i$ of $\partial D$.} \]
Moreover, such a  $\phi$ is unique and,  for some positive constant $C$, 
	\[\|\phi\|_{H^1(D)}\leq C\|u\|_{L^2(D)}.\]	
\end{lemma}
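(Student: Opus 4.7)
The first assertion, giving $\phi \in [H^1(D)]^3$ with $\curl \phi = u$, $\dive \phi = 0$, and $\|\phi\|_{H^1(D)} \le C \|u\|_{L^2(D)}$ under only the flux hypothesis, is \cite[Theorem 3.4]{Girault} applied directly. My plan therefore focuses on the strengthened conclusions under the extra hypothesis $u \cdot \nu = 0$ on $\partial D$, together with the uniqueness claim.

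For existence, the strategy is extension followed by gauge fixing. First extend $u$ by zero to $\tilde u$ on a ball $B \supset \bar D$; the hypothesis $u \cdot \nu = 0$ on $\partial D$ ensures $\tilde u \in H(\dive, B)$ with $\dive \tilde u = 0$ in $B$ and zero flux across $\partial B$. Apply the first part of the lemma to $\tilde u$ in $B$ to obtain $\tilde \phi \in [H^1(B)]^3$ with $\curl \tilde \phi = \tilde u$ and $\dive \tilde \phi = 0$, and set $\psi := \tilde \phi|_D$. The surface rotational of the tangential trace of $\psi$ equals $(\curl \psi) \cdot \nu = u \cdot \nu = 0$ on each $\Gamma_i$. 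Since $D$ is simply connected with smooth boundary in $\R^3$, each $\Gamma_i$ is a topological $2$-sphere (by the half-lives-half-dies principle applied to $H_1(\partial D; \mathbb{Q}) \to H_1(D; \mathbb{Q}) = 0$), hence simply connected, so the tangential trace of $\psi$ is a surface gradient $\nabla_{\Gamma_i} p_i$ for some $p_i \in H^{3/2}(\Gamma_i)$, defined up to an additive constant on each component. Let $\tilde p \in H^2(D)$ be the harmonic extension into $D$ of these boundary data and set $\phi_1 := \psi - \nabla \tilde p$: this satisfies $\curl \phi_1 = u$, $\dive \phi_1 = -\Delta \tilde p = 0$, and $\phi_1 \times \nu = 0$ on $\partial D$ since the tangential traces of $\psi$ and $\nabla \tilde p$ coincide. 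To finally enforce $\int_{\Gamma_i} \phi \cdot \nu = 0$, subtract $\nabla h$, where $h \in H^2(D)$ is harmonic in $D$ and equal to constants $c_i$ on each $\Gamma_i$; the $c_i$ are determined (up to a global additive shift, which has no effect on $\nabla h$) by solving the finite-dimensional linear system $\int_{\Gamma_i} \partial_\nu h = \int_{\Gamma_i} \phi_1 \cdot \nu$, which is solvable thanks to the compatibility $\sum_i \int_{\Gamma_i} \phi_1 \cdot \nu = 0$ coming from the divergence theorem together with the non-degeneracy of the harmonic capacity matrix. Setting $\phi := \phi_1 - \nabla h$ preserves $\curl$, $\dive$, and the tangential vanishing (since $h$ is constant on each $\Gamma_i$), while achieving the flux conditions.

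For uniqueness, if $\phi, \phi'$ both satisfy all conclusions, then $w := \phi - \phi'$ is curl- and divergence-free in $D$, with $w \times \nu = 0$ on $\partial D$ and $\int_{\Gamma_i} w \cdot \nu = 0$ for every $i$. Simple connectivity of $D$ yields $w = \nabla q_0$ with $q_0$ harmonic; $w \times \nu = 0$ forces $q_0$ to be a constant $c_i$ on $\Gamma_i$, and integration by parts gives $\int_D |\nabla q_0|^2 = \sum_i c_i \int_{\Gamma_i} w \cdot \nu = 0$, whence $w = 0$. The $H^1$ bound $\|\phi\|_{H^1(D)} \le C \|u\|_{L^2(D)}$ propagates step by step through the construction (Girault--Raviart for $\tilde\phi$, elliptic regularity for the harmonic extensions $\tilde p$ and $h$, continuity of the finite-dimensional correction). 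The main technical obstacle I anticipate is the flux-correction step: setting up the small linear system and verifying the non-degeneracy of the associated harmonic capacity matrix, together with carefully using the compatibility $\sum_i \lambda_i = 0$, is the delicate point of the proof.
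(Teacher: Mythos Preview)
The paper does not actually prove this lemma: it is stated as a known result with the citation ``see, e.g., \cite[Theorems 3.4 and 3.6]{Girault}'' and no further argument. Your proposal is therefore not competing with a proof in the paper but rather reconstructing the content of the cited reference.

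Your reconstruction is essentially correct and follows the standard route. The first assertion is indeed \cite[Theorem~3.4]{Girault}. For the second assertion, your extension-by-zero followed by gauge correction is the classical approach (and is close in spirit to what Girault--Raviart do). The topological step---that each boundary component $\Gamma_i$ of a smooth bounded simply connected domain in $\R^3$ is a sphere---is correct via half-lives-half-dies, and is needed to pass from ``surface curl zero'' to ``surface gradient''. The flux correction via the capacitance matrix is also standard; the compatibility $\sum_i \int_{\Gamma_i}\phi_1\cdot\nu = \int_D \dive\phi_1 = 0$ lands you in the range of that matrix, whose kernel is exactly the constants. The uniqueness argument is clean.

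One small remark: you could streamline by noting that the result with $\phi\times\nu=0$ and the flux conditions is precisely \cite[Theorem~3.6]{Girault}, so a direct citation suffices if you are willing to invoke the reference, as the paper does. If you prefer to keep the self-contained argument, the only place to be careful in writing it out is the regularity chain $p_i\in H^{3/2}(\Gamma_i)\Rightarrow \tilde p\in H^2(D)$, which requires the smoothness of $\partial D$ assumed throughout the section.
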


The following result is  a type of Helmholtz decomposition. It is a variant of   \cite[Corollary 3.4]{Girault}  where $\sigma$ is a positive constant.

\begin{lemma}
	\label{composition}
	Assume that $D$ is simply connected and let $\sigma$ be $3\times 3$ uniformly elliptic matrix-valued function defined in $D$. For any $v\in [L^2(D)]^3$,  there exist $p\in H^1(D)$ and  $\phi \in [H^1(D)]^3$ such that 
\begin{equation}
v = \sigma \nabla p + \nabla\times \phi  \mbox{ in } D, \quad  \dive \phi = 0 \mbox{ in } D \quad  \mbox{ and } \quad \phi\times \nu = 0 \mbox{ on } \partial D.
\end{equation}	
Moreover, 
	\begin{equation}
	\| p\|_{H^1(D)} + \| \phi \|_{H^1(D)} \le C \| v\|_{L^2(D)}.
	\end{equation}
\end{lemma}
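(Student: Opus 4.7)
\textbf{Proof plan for Lemma \ref{composition}.} My plan is the usual Helmholtz decomposition strategy: extract a $\sigma$-gradient part by solving a weak Neumann problem, then apply the stream-function result (Lemma \ref{lem:stream}) to the remainder. The main subtle point is interpreting a Neumann-type boundary condition when $v$ is only $L^2$, and verifying that the remainder has the right boundary behavior so that Lemma \ref{lem:stream} applies.

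\textbf{Step 1 (solve for $p$).} On the quotient $H^1(D)/\mathbb{R}$, define the sesquilinear form
\begin{equation*}
a(p,q) = \int_D \sigma \nabla p \cdot \overline{\nabla q}\, dx
\end{equation*}
and the antilinear functional
\begin{equation*}
\ell(q) = \int_D v \cdot \overline{\nabla q}\, dx.
\end{equation*}
By the uniform ellipticity of $\sigma$ and the Poincar\'e--Wirtinger inequality on $D$, the form $a$ is bounded and coercive on $H^1(D)/\mathbb{R}$; the functional $\ell$ is bounded by $\|v\|_{L^2(D)}$. By Lax--Milgram, there exists a unique $p \in H^1(D)/\mathbb{R}$ with $a(p,q) = \ell(q)$ for every $q \in H^1(D)$, satisfying $\|p\|_{H^1(D)/\mathbb{R}} \le C \|v\|_{L^2(D)}$. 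Fix a representative by imposing $\int_D p\,dx = 0$.

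\textbf{Step 2 (analyze the remainder).} Set $w = v - \sigma \nabla p \in [L^2(D)]^3$. The equation $a(p,q) = \ell(q)$ reads
\begin{equation*}
\int_D w \cdot \overline{\nabla q}\, dx = 0 \quad \text{for all } q \in H^1(D).
\end{equation*}
Testing with $q \in C_c^\infty(D)$ gives $\dive w = 0$ in $D$ in the distributional sense, hence $w \in H(\dive, D)$ with $\dive w = 0 \in L^2(D)$. Testing then with arbitrary $q \in H^1(D)$ and using the normal-trace integration by parts in $H(\dive, D)$ yields $w \cdot \nu = 0$ in $H^{-1/2}(\partial D)$. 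In particular $\int_{\Gamma_i} w \cdot \nu = 0$ on every connected component of $\partial D$.

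\textbf{Step 3 (stream function for the remainder).} Since $D$ is simply connected, $w \in H(\dive, D)$ with $\dive w = 0$ and $w \cdot \nu = 0$ on $\partial D$, we may apply the second part of Lemma \ref{lem:stream} to obtain $\phi \in [H^1(D)]^3$ with
\begin{equation*}
\nabla \times \phi = w \text{ in } D, \quad \dive \phi = 0 \text{ in } D, \quad \phi \times \nu = 0 \text{ on } \partial D,
\end{equation*}
and $\|\phi\|_{H^1(D)} \le C \|w\|_{L^2(D)} \le C \|v\|_{L^2(D)}$. Combining with Step 1 gives the decomposition $v = \sigma \nabla p + \nabla \times \phi$ together with the required estimate.

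\textbf{Anticipated obstacle.} The only nontrivial point is the low regularity of $v$: a priori neither $\dive v$ nor $v \cdot \nu$ exists classically, so the Neumann problem must be set up weakly as above. Once this is done correctly, the identity $\int_D w \cdot \nabla \bar q = 0$ for all $q \in H^1(D)$ simultaneously encodes $\dive w = 0$ and the vanishing of the normal trace $w \cdot \nu$ in $H^{-1/2}(\partial D)$, which is exactly what Lemma \ref{lem:stream} requires.
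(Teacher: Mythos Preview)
Your proof is correct and follows essentially the same approach as the paper: solve the weak Neumann problem for $p$ via Lax--Milgram on $H^1(D)/\mathbb{R}$, observe that the remainder $w=v-\sigma\nabla p$ is divergence-free with vanishing normal trace, and apply Lemma~\ref{lem:stream} to obtain $\phi$. Your discussion of the low-regularity issue is slightly more explicit than the paper's, but the argument is the same.
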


\begin{proof} The proof given here is in the  spirit of \cite{Girault} as follows.  By Lax-Milgram's theorem, there exists a unique solution $p \in H^1(D)$ with $\dsp \int_{D} p = 0$ to the equation
	\begin{equation*}
	\int_{D}\sigma \nabla p\cdot \nabla q \, dx = \int_D v\cdot \nabla q\, dx \mbox{ for all } q \in H^1(D). 
	\end{equation*} 
Moreover, 
\begin{equation}\label{lem-Comp-1}
\|p\|_{H^1(D)}\leq C\|v\|_{L^2(D)}.
\end{equation}
Then
\begin{equation}\label{lem-Comp-2}
\dive(v - \sigma \nabla p) = 0  \mbox{ in } D \quad \mbox{ and } \quad  (v - \sigma \nabla p)\cdot \nu = 0 \mbox{ on } \partial D.
\end{equation}
By Lemma \ref{potential}, there  exists $\phi \in [H^1(D)]^3$ such that 
\begin{equation}\label{lem-Comp-3}
\begin{cases}\nabla\times \phi =  v - \sigma \nabla p &\text{ in } D,\\[6pt]
	\dive \phi = 0 & \text{ in } D,\\[6pt]
	\phi \times \nu = 0 & \text{ on } \partial D,
	\end{cases}
\quad \mbox{ and } \quad 
\|\phi\|_{H^1(D)}\leq C\|v-\sigma\nabla p\|_{L^2(D)}.
\end{equation}
Combining \eqref{lem-Comp-1}, \eqref{lem-Comp-2}, and  \eqref{lem-Comp-3}, 
we reach the conclusion for such a pair $(p, \phi)$.
\end{proof}

We next present two  lemmas concerning the uniqueness of the exterior problems for electro-static settings. They are used in  the study of the exterior problems in the low frequency regime, see Lemma \ref{lem:ex-12}. The first one is

\begin{lemma}\label{uniquenessstatic} Assume that $\R^3\setminus D$ is simply connected. Let $u \in H_{\loc}(\curl, \mR^3 \setminus D) \cap H_{\loc}(\dive, \mR^3 \setminus D)$ be such that 
\begin{equation*}
\left\{\begin{array}{cl}
\nabla\times u = 0 &\text{ in } \R^3\setminus D,\\[6pt]
\dive u = 0 &\text{ in } \R^3\setminus D,\\[6pt]
u\cdot \nu = 0 &\text{ on } \partial D,
\end{array} \right. 
\end{equation*}
and 
\begin{equation}\label{US-1}
|u(x)|  = O(|x|^{-2})\mbox{ for large $|x|$}.  
\end{equation}
Then $u = 0$ in $\mR^3 \setminus D$.  
\end{lemma}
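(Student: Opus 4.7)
The plan is to reduce the problem to showing a harmonic function with vanishing Neumann trace on $\partial D$ and $O(|x|^{-1})$ decay at infinity must be constant (hence zero after normalization).

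First, since $\nabla \times u = 0$ in $\R^3\setminus D$ which is simply connected, and since $|u(x)| = O(|x|^{-2})$, Lemma~\ref{lem:potential} provides a potential $\xi \in H^1_{\loc}(\R^3\setminus D)$ with
\[
\nabla \xi = u \quad \text{in } \R^3\setminus D, \qquad |\xi(x)| = O(|x|^{-1}) \text{ for large } |x|.
\]
Then $\Delta \xi = \dive u = 0$ in $\R^3\setminus D$, so $\xi$ is harmonic and hence smooth in the interior; moreover the boundary condition $u\cdot \nu = 0$ on $\partial D$ translates into $\partial_\nu \xi = 0$ on $\partial D$, interpreted as an equality in $H^{-1/2}(\partial D)$ via the trace of $u \in H_{\loc}(\dive,\R^3\setminus D)$.

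Next, I would integrate by parts over the bounded region $B_R \setminus \bar D$ for $R$ large enough that $D \subset B_R$. Using $\Delta \xi = 0$, Green's identity gives
\[
\int_{B_R\setminus \bar D} |\nabla \xi|^2\,dx \;=\; \int_{\partial B_R} \bar\xi\, \partial_\nu \xi \,dS \;-\; \int_{\partial D} \bar\xi\, \partial_\nu \xi \,dS,
\]
with $\nu$ the outward normal of $D$ (inward with respect to the annular region $B_R\setminus \bar D$). The boundary integral on $\partial D$ vanishes because $\partial_\nu \xi = u\cdot \nu = 0$ there, via the duality pairing $\langle \partial_\nu \xi, \bar\xi\rangle_{H^{-1/2}, H^{1/2}}$.

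The final step is to show the $\partial B_R$ term vanishes in the limit. Using the decay estimates $|\xi| \le C/R$ and $|\partial_\nu \xi| \le |u| \le C/R^2$ on $\partial B_R$ and the surface area $4\pi R^2$, we get
\[
\left| \int_{\partial B_R} \bar\xi\, \partial_\nu \xi \,dS \right| \;\le\; \frac{C}{R} \;\longrightarrow\; 0 \quad \text{as } R \to \infty.
\]
Letting $R \to \infty$ yields $\int_{\R^3\setminus D} |\nabla \xi|^2 \,dx = 0$, so $\nabla \xi = 0$ and therefore $u = 0$ in $\R^3\setminus D$. The only minor technical point is justifying the duality pairing on $\partial D$ (requiring $\bar\xi|_{\partial D} \in H^{1/2}(\partial D)$, which follows from $\xi \in H^1_{\loc}$), but this is standard and not a real obstacle.
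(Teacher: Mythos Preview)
Your proof is correct and follows essentially the same route as the paper: both invoke Lemma~\ref{lem:potential} to write $u = \nabla\xi$ with $\xi$ harmonic, $\partial_\nu\xi = 0$ on $\partial D$, and $\xi = O(|x|^{-1})$. The only difference is that the paper then cites \cite[Theorem~2.5.15]{Nedelec} for the uniqueness of the exterior Neumann problem, whereas you supply the elementary integration-by-parts argument directly; your version is more self-contained but otherwise identical in strategy.
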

\begin{proof}
By Lemma \ref{lem:potential}, there exists $\xi \in H^1_{\loc}(\R^3\setminus D)$ such that 
\begin{equation}
\nabla \xi = u \mbox{ in $\R^3\setminus D$} \quad \mbox{ and } \quad |\xi (x)| = O(|x|^{-1}) \mbox{ for large $|x|$}.
\end{equation}
Since $\dive u = 0$, we have 
\[\Delta \xi = 0 \text{ in } \R^3\setminus D.\]
Since $\nabla \xi \cdot \nu = u\cdot \nu = 0$ on $\partial D$, it follows that $\xi = 0$ in $\R^3 \setminus D$, see, e.g., \cite[Theorem 2.5.15]{Nedelec}.  Therefore,  $u = 0$. 
\end{proof}

The second lemma is

 \begin{lemma}\label{uniquenessstaticE} Assume that $\R^3\setminus D$ is simply connected and $u \in H_{\loc}(\curl, \mR^3 \setminus D) \cap H_{\loc}(\dive, \mR^3 \setminus D)$ is such that 
\begin{equation*}
\left\{\begin{array}{cl}
\nabla\times u = 0 &\text{ in } \R^3\setminus D,\\[6pt]
\dive u = 0 &\text{ in } \R^3\setminus D,\\[6pt]
u\times \nu = 0 &\text{ on } \partial D, 
\end{array} \right. \quad \int_{\Gamma_i} u \cdot \nu = 0 \mbox{  for all connected component $\Gamma_i$ of $\partial D$}, 
\end{equation*}
and 
\begin{equation}\label{US-2}
|u(x)| = O(|x|^{-2}) \mbox{ for large $|x|$.} 
\end{equation}
Then $u = 0$ in $\mR^3 \setminus D$.  
\end{lemma}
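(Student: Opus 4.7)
The plan is to mirror the proof of Lemma \ref{uniquenessstatic}, but the roles of the normal and tangential components on $\partial D$ are swapped, so the boundary term has to be handled differently, using the flux hypothesis together with the fact that the potential is locally constant on $\partial D$.

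First, since $\nabla \times u = 0$ in the simply connected domain $\R^3 \setminus D$ and $|u(x)| = O(|x|^{-2})$ for large $|x|$, Lemma \ref{lem:potential} gives a scalar potential $\xi \in H^1_{\loc}(\R^3 \setminus D)$ with $\nabla \xi = u$ in $\R^3 \setminus D$ and $|\xi(x)| = O(|x|^{-1})$ for large $|x|$. From $\dive u = 0$ we get $\Delta \xi = 0$ in $\R^3 \setminus D$. The boundary condition $u \times \nu = 0$ on $\partial D$ translates into the vanishing of the tangential gradient of $\xi$ on $\partial D$ (in the appropriate trace sense); hence $\xi$ is constant on each connected component $\Gamma_i$ of $\partial D$, say $\xi|_{\Gamma_i} = c_i$.

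Next, I would estimate $\int_{\R^3 \setminus D} |\nabla \xi|^2 \, dx$ via a standard Green identity on $\Omega_R := B_R \setminus \overline{D}$ for large $R$. Using $\dive(\xi \nabla \xi) = |\nabla \xi|^2$ together with $\Delta \xi = 0$, I obtain
\begin{equation*}
\int_{\Omega_R} |\nabla \xi|^2 \, dx = \int_{\partial B_R} \xi \, \nabla \xi \cdot \hat{x}/|\hat{x}| \, dS - \sum_i \int_{\Gamma_i} \xi \, \nabla \xi \cdot \nu \, dS.
\end{equation*}
The decay $|\xi| = O(|x|^{-1})$ and $|\nabla \xi| = |u| = O(|x|^{-2})$ together with $|\partial B_R| = O(R^2)$ force the boundary integral over $\partial B_R$ to be $O(R^{-1})$, which vanishes in the limit $R \to \infty$. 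On each $\Gamma_i$, since $\xi \equiv c_i$ is constant and $\nabla \xi \cdot \nu = u \cdot \nu$,
\begin{equation*}
\int_{\Gamma_i} \xi \, \nabla \xi \cdot \nu \, dS = c_i \int_{\Gamma_i} u \cdot \nu \, dS = 0
\end{equation*}
by the hypothesis $\int_{\Gamma_i} u \cdot \nu = 0$. Letting $R \to \infty$ yields $\int_{\R^3 \setminus D} |\nabla \xi|^2 \, dx = 0$, i.e.\ $u = \nabla \xi = 0$ in $\R^3 \setminus D$.

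The only delicate point I expect is making rigorous the statement that $\xi$ is constant on each $\Gamma_i$: one must argue via tangential traces that $u \times \nu = 0$ implies the $H^{-1/2}$ surface gradient of the $H^{1/2}$ trace of $\xi$ vanishes on each $\Gamma_i$; since each $\Gamma_i$ is connected, $\xi|_{\Gamma_i}$ is then a constant. Once this and the integrability of $|\nabla \xi|^2$ near $\partial D$ (from $u \in H_{\loc}(\curl) \cap H_{\loc}(\dive)$) are in place, the integration by parts and the passage to $R \to \infty$ are routine.
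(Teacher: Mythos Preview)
Your proof is correct, but it takes a genuinely different route from the paper's. The paper introduces, in addition to the scalar potential $\xi$ with $\nabla\xi = u$, a \emph{vector} potential $\psi\in [H^1_{\loc}(\R^3\setminus D)]^3$ with $\nabla\times\psi = u$ (this is where the paper uses the flux hypothesis $\int_{\Gamma_i} u\cdot\nu = 0$, together with $\dive u = 0$). It then pairs $\nabla\times(\theta_t\psi)$ with $\nabla(\theta_s\bar\xi)$ for cutoff functions $\theta_t,\theta_s$; integration by parts throws the boundary term onto $\partial D$, where it vanishes because $u\times\nu=0$, and letting $t\to\infty$ then $s\to\infty$ yields $\int|u|^2=0$.

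Your argument is more elementary: you use only the scalar potential $\xi$, translate $u\times\nu=0$ into the constancy of the trace $\xi|_{\Gamma_i}=c_i$, and apply Green's identity for $\Delta\xi=0$ on $B_R\setminus\bar D$; the flux hypothesis then kills each boundary contribution $c_i\int_{\Gamma_i} u\cdot\nu$ directly. Both arguments use all three boundary hypotheses, just in a different order: the paper spends the flux condition on constructing $\psi$ and the tangential condition on the boundary term, whereas you spend the tangential condition on making $\xi$ locally constant and the flux condition on the boundary term. Your version avoids the vector potential and the double cutoff, at the modest cost of the trace argument that $\nabla_\Gamma(\xi|_{\partial D})=0$ implies $\xi|_{\Gamma_i}$ constant --- which you correctly flag as the only point needing care, and which is standard once one notes $\xi\in H^1_{\loc}$ so that $\xi|_{\partial D}\in H^{1/2}(\partial D)$ and $(\nabla\xi)\times\nu$ agrees with $\nu\times\nabla_\Gamma(\xi|_{\partial D})$ in $H^{-1/2}$.
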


\begin{proof}
By Lemma \ref{lem:potential},  there exists $\xi \in H^1_{\loc}(\R^3\setminus D)$, such that 
\begin{equation}
\nabla \xi = u \mbox{ in } \R^3\setminus D
\quad \mbox{ and } \quad   |\xi(x)|  = O(|x|^{-1}) \quad \mbox{ for large } |x|.
\end{equation}
There exists $\psi \in [H^1_{\loc}(\R^3\setminus D)]^3$, such that 
 	\[\nabla\times \psi = u \mbox{ in } \R^3\setminus D.\]
	Fix $\theta \in C^1(\R^3)$ such that  $0 \le \theta \le 1$,  $\theta = 1$ in $B_1$ and $\supp \theta \subset B_2$.  For $r>0$, set $\theta_r (\cdot )  = \theta (\cdot / r)$ in $\R^3$. Let $t >  s> 0$ be large enough (arbitrary) such that $D \subset \subset B_s$. 
Since $u \times \nu = 0$ on $\partial D$, we obtain, by integration by parts, that 
\begin{equation*}
\int_{\R^3\setminus D} \nabla\times (\theta_t \psi)\cdot\nabla(\theta_s \bar \xi) \, dx 
 = -  \int_{\partial D} \theta_t \psi \cdot \nabla (\theta_s \bar  \xi) \times \nu \, ds = -\int_{\partial D} \psi \cdot \bar{u} \times \nu \, ds = 0. 
\end{equation*}
Letting $t \to + \infty$, we derive that 
\begin{equation}\label{lem-U2-p1}
\int_{\R^3\setminus D} u \cdot\nabla(\theta_s \bar \xi)dx = 0.
\end{equation}	
We have 
\begin{equation}\label{lem-U2-p2}
\int_{B_{2s} \setminus B_s} |u| |\xi| |\nabla \theta_s| \, dx \le C |B_{2s} \setminus B_s| s^{-2} s^{-1} s^{-1} \le C s^{-1} \to 0 \mbox{ as } s \to + \infty.  
\end{equation}
Using the fact that 
$$
u \cdot\nabla(\theta_s \bar \xi) = u \big(\theta_s \nabla \bar \xi + \bar \xi \nabla \theta_s) = \theta_s |u|^2 +  u \bar \xi \nabla \theta_s \mbox{ in } \mR^{3} \setminus D, 
$$
and combining  \eqref{lem-U2-p1} and \eqref{lem-U2-p2}, we obtain 
\begin{equation*}
\int_{\R^3\setminus D} |u|^2  \, dx  = 0, 
\end{equation*}
which yields  $u = 0$ in $\R^3\setminus D$.
\end{proof}

\medskip 
The following result is  a consequence of the Stratton - Chu formula. 

\begin{lemma}\label{lem-SC-1}Let $0< k\leq k_0$. Assume that $D \subset\subset B_1$ and  $(E, H) \in \big[H_{\loc}(\curl, \mR^3 \setminus D) \big]^2$ is a  radiating solution to the Maxwell equations
\[
\left\{\begin{array}{cl}
\nabla\times E = ikH & \mbox{ in }  \R^3\setminus \bar{D}, \\[6pt] 
\nabla\times H = -ikE & \mbox{ in } \R^3\setminus \bar{D}.
\end{array}\right.
\]
We have
\begin{equation}\label{SC-1}
	\Big|\big(E(x), H(x)\big) \Big| \le \frac{C}{|x|^2} \big(1 + k|x| \big) \| (E, H)\|_{L^2(B_3 \setminus D)} \mbox{ for } |x| > 3, 
	\end{equation}
for some positive constant $C$ independent of $x$ and $k$. 
\end{lemma}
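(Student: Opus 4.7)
The plan is to apply the Stratton--Chu integral representation to the solution $(E,H)$ on an intermediate sphere, estimate the Green's function and its gradient on this sphere, and finally bound the boundary traces of $(E,H)$ by the right-hand side using interior elliptic regularity for Maxwell's system.

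Since $D\subset\subset B_1$, the sphere $\Sigma:=\partial B_{5/2}$ lies strictly inside $\mathbb R^3\setminus\bar D$ and strictly inside $\{|x|>3\}^{c}$. For a radiating solution $(E,H)$ of the homogeneous Maxwell system in the exterior of a bounded domain, the Stratton--Chu formula gives, for every $x$ with $|x|>5/2$,
\[
E(x)=\int_{\Sigma}\Bigl[\,ik\,\Phi_{k}(x,y)\bigl(\nu\times H\bigr)(y)+\bigl(\nu(y)\times E(y)\bigr)\times\nabla_{x}\Phi_{k}(x,y)+\bigl(\nu(y)\cdot E(y)\bigr)\nabla_{x}\Phi_{k}(x,y)\Bigr]\,ds(y),
\]
with $\Phi_{k}(x,y)=e^{ik|x-y|}/(4\pi|x-y|)$, and an entirely analogous formula for $H$. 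The radiation conditions ensure that the contribution from the sphere at infinity vanishes, so no further boundary terms appear.

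For $|x|>3$ and $y\in\Sigma$ we have $|x-y|\ge|x|/6$, and a direct computation of $\nabla_{x}\Phi_{k}$ yields
\[
|\Phi_{k}(x,y)|\le\frac{C}{|x|},\qquad|\nabla_{x}\Phi_{k}(x,y)|\le\frac{C(1+k|x|)}{|x|^{2}}.
\]
Plugged into the Stratton--Chu formula, the first term contributes at most $Ck/|x|=Ck|x|/|x|^{2}$, while the second and third contribute at most $C(1+k|x|)/|x|^{2}$; all three are therefore bounded by $C(1+k|x|)/|x|^{2}$ times the $L^{\infty}(\Sigma)$ norm of $E$ and $H$, for $k\le k_{0}$.

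It remains to control these traces by $\|(E,H)\|_{L^{2}(B_{3}\setminus D)}$. The annulus $B_{3}\setminus\bar B_{2}$ is relatively compact in $\mathbb R^{3}\setminus\bar D$ and $(E,H)$ is a distributional solution of the homogeneous Maxwell system there; applying the curl to each equation shows that each component of $E$ and $H$ solves the scalar Helmholtz equation $(\Delta+k^{2})u=0$ in this annulus, so standard interior elliptic regularity (with constants depending only on $k_{0}$ via $0<k\le k_{0}$) gives
\[
\|(E,H)\|_{L^{\infty}(\Sigma)}\le C\,\|(E,H)\|_{L^{2}(B_{3}\setminus B_{2})}\le C\,\|(E,H)\|_{L^{2}(B_{3}\setminus D)}.
\]
Combining this with the kernel estimate and doing the same for $H$ yields \eqref{SC-1}. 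The only delicate point is making sure that the factor $(1+k|x|)$ in the bound is exactly the one generated by the gradient of $\Phi_{k}$ and that the interior regularity constant is controlled uniformly in $k\in(0,k_{0}]$; both facts are standard, but must be tracked explicitly since we want the estimate to be valid all the way down to $k\to 0$.
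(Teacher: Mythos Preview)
Your proof is correct and follows essentially the same approach as the paper's: the paper also applies the Stratton--Chu representation on a fixed sphere (it uses $\partial B_2$ rather than your $\partial B_{5/2}$), bounds the kernel by $C(1+k|x|)/|x|^2$, and controls the boundary traces via interior regularity for the Helmholtz equation satisfied componentwise by $E$ and $H$. The only cosmetic difference is the choice of radius and the precise form in which Stratton--Chu is written.
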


\begin{proof} Set
$$
G_k(x, y) = \frac{e^{ik|x-y|}}{4\pi|x-y|} \mbox{ for } x, y \in \mR^3, x\neq y.
$$ 
It is known that, see, e.g., \cite[Theorem 6.6 and (6.10)]{Kress},   the following variant of the Stratton-Chu formula holds,  for $x \in \mR^3 \setminus \bar D$,   
\begin{multline}\label{SC-E}
  E(x)=  \nabla_{x}\times \int_{\partial B_2} \nu(y) \times E (y)  G_k (x, y) dy \\[6pt]
    + ik\int_{\partial B_2}\nu(y)\times H(y)G_k(x,y)dy - \nabla_{x}\int_{\partial B_2}\nu(y)\cdot E(y)G_k(x, y)dy.
\end{multline}
Using the facts 
$$
|\nabla G_k(x, y)| \le \frac{C}{|x|^2}  (1 + k |x|) \mbox{ for } y \in \partial B_2, x \in \mR^3 \setminus B_3  
$$
and, since $\Delta E + k^2 E = 0$ in $\mR^3 \setminus D$, 
$$
\| E \|_{L^\infty(\partial B_2)} \le C \| E \|_{L^2(B_3 \setminus D)}, \mbox{ for some positive constant $C$ depending only on $k_0$, }
$$
we derive from \eqref{SC-E} that
	\begin{equation}\label{cor-SC-1}
	|E(x)| \le \frac{C}{|x|^2} \big(1 + k|x| \big) \| (E, H)\|_{L^2(B_3 \setminus D)} \mbox{ for } |x| > 3. 
	\end{equation}
	Similarly, we obtain 
	\begin{equation}\label{cor-SC-2}
	|H(x)| \le \frac{C}{|x|^2} \big(1 + k|x| \big) \| (E, H)\|_{L^2(B_3 \setminus D)} \mbox{ for } |x| > 3.
	\end{equation}
	The conclusion now follows from \eqref{cor-SC-1} and \eqref{cor-SC-2}. 
\end{proof}

We next recall compactness results related to $H(\curl, \cdot)$ and $H(\dive, \cdot)$.

 \begin{lemma} \label{lem:compact}
Let $\epsilon$ be a measurable symmetric uniformly elliptic matrix-valued function defined in $D$. Assume that one of the following two conditions holds 
\begin{enumerate}
\item[i)]  $(u_n)_{n\in \N}\subset H(\curl, D)$ is a bounded sequence in $H(\curl, D)$ such that 
\[\big(\dive(\epsilon u_n) \big)_{n\in \N}\text{ converges in }  H^{-1}(D) \text{ and } \big(u_n\times \nu \big)_{n\in \N} \text{ converges  in } H^{-1/2}(\partial D).\]

\item[ii)] $(u_n)_{n\in \N}\subset H(\curl, D)$ is a bounded sequence in $H(\curl, D)$ such that 
 \[\big(\dive(\epsilon u_n)\big)_{n\in \N} \text{ is bounded in }  L^2(D) \text{ and } \big(\epsilon u_n \cdot \nu \big)_{n\in \N}\text{ converges  in } H^{-1/2}(\partial D).\]
\end{enumerate}

There exists a subsequence of $(u_n)_{n\in \N}$ which converges in $[L^2(D)]^3$.
\end{lemma}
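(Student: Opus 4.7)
My plan is to treat both cases in parallel via a Helmholtz-type decomposition $u_n = \nabla p_n + w_n$ on $D$. The scalar $p_n$ is chosen so as to absorb the divergence information together with the prescribed boundary datum, and the residual $w_n$ will then be divergence-free in the $\epsilon$-sense, with controlled curl and the remaining boundary trace; convergence of each piece is then established separately.

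First I would construct $p_n$ and show $\nabla p_n$ is relatively compact in $[L^2(D)]^3$. In case (i), let $p_n \in H^1_0(D)$ be the weak solution of $\dive(\epsilon \nabla p_n) = \dive(\epsilon u_n)$ in $H^{-1}(D)$; the Lax--Milgram theorem yields existence, uniqueness and the continuity estimate $\|p_n - p_m\|_{H^1_0(D)} \le C\|\dive(\epsilon u_n - \epsilon u_m)\|_{H^{-1}(D)}$, so the assumed $H^{-1}$-convergence of $\dive(\epsilon u_n)$ gives $p_n \to p$ in $H^1_0(D)$. In case (ii), let $p_n \in H^1(D)$ normalized by $\int_D p_n = 0$ solve the Neumann problem $\dive(\epsilon \nabla p_n) = \dive(\epsilon u_n)$ in $D$, $\epsilon \nabla p_n \cdot \nu = \epsilon u_n \cdot \nu$ on $\partial D$, which is compatible by the divergence theorem. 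Here $\dive(\epsilon u_n)$ is bounded in $L^2(D)$ and thus precompact in $H^{-1}(D)$ via the compact embedding $L^2(D) \hookrightarrow H^{-1}(D)$, while the boundary datum converges in $H^{-1/2}(\partial D)$; continuous dependence then gives $p_n \to p$ in $H^1(D)$ along a subsequence.

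Next, setting $w_n := u_n - \nabla p_n$ gives $\dive(\epsilon w_n) = 0$ in $D$. In case (i), $p_n = 0$ on $\partial D$ forces $\nabla p_n \times \nu = 0$, so $w_n \times \nu$ still converges in $H^{-1/2}(\partial D)$. In case (ii), $\epsilon w_n \cdot \nu = 0$ on $\partial D$. In both cases Lemma~\ref{lem:stream} applied to $\epsilon w_n$ produces $\phi_n \in [H^1(D)]^3$ with $\curl \phi_n = \epsilon w_n$, $\dive \phi_n = 0$, $\|\phi_n\|_{H^1(D)} \le C$, and additionally $\phi_n \times \nu = 0$ in case (ii). By Rellich, a subsequence of $\phi_n$ converges in $[L^2(D)]^3$. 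Using ellipticity of $\epsilon$ together with integration by parts,
\begin{align*}
\int_D \epsilon(w_n - w_m) \cdot (w_n - w_m)\,dx
&= \int_D (w_n - w_m) \cdot \curl(\phi_n - \phi_m)\,dx \\
&= \int_D \curl(w_n - w_m) \cdot (\phi_n - \phi_m)\,dx \\
&\quad + \int_{\partial D} \bigl((w_n - w_m) \times \nu \bigr) \cdot (\phi_n - \phi_m)\,ds.
\end{align*}
The volume term tends to $0$ because $\curl w_n = \curl u_n$ is bounded in $L^2(D)$ while $\phi_n$ is Cauchy in $L^2(D)$. The boundary term vanishes identically in case (ii) thanks to $\phi_n \times \nu = 0$, and tends to $0$ in case (i) by pairing the Cauchy sequence $(w_n - w_m) \times \nu$ in $H^{-1/2}(\partial D)$ against the bounded sequence $\phi_n - \phi_m$ in $H^{1/2}(\partial D)$. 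Hence $w_n$ is Cauchy in $[L^2(D)]^3$, and combining with the convergence of $\nabla p_n$ yields the conclusion.

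The main delicate point will be the flux-cancellation hypothesis of Lemma~\ref{lem:stream}, namely $\int_{\Gamma_i} \epsilon w_n \cdot \nu = 0$ on each connected component $\Gamma_i$ of $\partial D$. This holds automatically in case (ii), and also in case (i) whenever $\partial D$ is connected; for multiply connected boundaries in case (i) one must first subtract from $w_n$ a finite-dimensional correction by harmonic vector fields carrying the residual component fluxes, which is bounded in $H^1(D)$ and can be reabsorbed into the decomposition without affecting the compactness argument.
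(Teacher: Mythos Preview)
Your argument is correct, and it is essentially the standard proof underlying the references the paper cites. The paper itself does not prove this lemma: it records that case~(i) is \cite[Lemma~1]{HM1} (with roots in \cite{Haddar,Costabel}) and asserts that case~(ii) follows by the same method. Your write-up supplies exactly that method---the $\epsilon$-weighted Helmholtz decomposition $u_n=\nabla p_n+w_n$ (this is the decomposition of Lemma~\ref{composition} applied to $\epsilon u_n$), compactness of $\nabla p_n$ via the elliptic problem absorbing the divergence/normal data, and the integration-by-parts estimate on the solenoidal remainder using the vector potential from Lemma~\ref{lem:stream}. Your remark that the flux hypothesis of Lemma~\ref{lem:stream} may require a finite-rank harmonic correction when $\partial D$ has several components is accurate and is the usual way to close the argument; in the applications of the present paper $D$ is in fact taken simply connected with connected boundary, so this subtlety does not arise there.
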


The conclusion of Lemma~\ref{lem:compact} under condition $i)$ is  \cite[Lemma 1]{HM1} and has its roots in \cite{Haddar} and \cite{Costabel}.  The conclusion of Lemma~\ref{lem:compact} under condition $ii)$ can be obtained in  the same way.
These compactness results play a similar role as the compact embedding of $H^1$ into $L^2$ in the acoustic setting and are basic ingredients in our  approach.  

\medskip 

The following trace results related to $H(\curl, \cdot)$ and $H(\dive, \cdot)$ are standard, see, e.g.,  \cite{Alonso, BC, Girault}.  
  
\begin{lemma}\label{lem:trace} Set $\Gamma = \partial D$. We have 
\begin{enumerate}
\item[i)]   
\[
 \|v \times \nu\|_{H^{-1/2}(\dive_{\Gamma},\Gamma)}\leq C\|v\|_{H(\curl, D)} \mbox{ for } v \in H(\curl, D).  
 \] 

\item[ii)] 
\[
 \|v \cdot \nu\|_{H^{-1/2}(\Gamma)}\leq C\|v\|_{H(\dive, D)} \mbox{ for } v \in H(\dive, D). 
\]
\end{enumerate}

Moreover, for any $h\in H^{-1/2}(\dive_{\Gamma}, \partial D)$,  there exists $\phi \in H(\curl, D)$ such that 
\[
\phi\times \nu = h \mbox{ on } \partial D, \mbox{ and } \|\phi\|_{H(\curl, D)}\leq C\|h\|_{H^{-1/2}(\dive_{\Gamma}, \partial D)}.
\]

Here $C$ denotes a positive constant depending only on $D$.
\end{lemma}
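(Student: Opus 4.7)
The plan is to invoke the classical trace theory for $H(\curl,\cdot)$ and $H(\dive,\cdot)$ on a Lipschitz domain, following the approach of Girault--Raviart, Alonso--Valli, and Buffa--Ciarlet (the references already supplied); since the authors themselves flag this lemma as standard, I only need to lay out the mechanism, not redo the entire construction.

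First I would treat part (ii), which is easier. Given $v\in H(\dive, D)$ and $\psi\in H^{1/2}(\partial D)$, pick any continuous lift $\tilde\psi\in H^1(D)$ with $\|\tilde\psi\|_{H^1(D)}\le C\|\psi\|_{H^{1/2}(\partial D)}$ and \emph{define} the normal trace by duality through
\[
\langle v\cdot \nu, \psi \rangle_{\partial D} := \int_D v\cdot \nabla\tilde\psi\, dx + \int_D (\dive v)\,\tilde\psi\, dx.
\]
Independence of the chosen lift follows by density of $C^\infty_c(D)$ in $H^1_0(D)$ together with the classical Stokes identity applied to smooth approximations. The estimate $\|v\cdot \nu\|_{H^{-1/2}(\partial D)}\le C\|v\|_{H(\dive, D)}$ is then just Cauchy--Schwarz combined with continuity of the lift. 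The tangential trace in part (i) is defined analogously via the vectorial Green identity
\[
\langle v\times\nu, \phi\rangle_{\partial D} := \int_D v\cdot(\nabla\times \tilde\phi)\, dx - \int_D (\nabla\times v)\cdot \tilde\phi\, dx,
\]
for $\tilde\phi\in [H^1(D)]^3$ extending the boundary datum $\phi$.

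The main obstacle, and the reason the lemma is genuinely nontrivial, is identifying the correct target space in (i): one must show that the above functional is continuous precisely with respect to the $H^{-1/2}(\dive_\Gamma, \Gamma)$ norm, and conversely that every element of this space arises as a tangential trace. This requires the surface integration-by-parts identity $\int_\Gamma (\nabla_\Gamma u)\cdot w\,ds = -\int_\Gamma u\,\dive_\Gamma w\,ds$ together with a Hodge-type decomposition of tangential fields on $\Gamma$ (scalar plus surface curl component), which is the technical content delivered by the Buffa--Ciarlet framework. For the surjectivity assertion in the ``moreover'' part, given $h\in H^{-1/2}(\dive_\Gamma, \partial D)$ I would build $\phi$ by solving an auxiliary elliptic problem, for instance $-\Delta\Phi+\Phi = 0$ in $D$ with tangential boundary condition $\Phi\times \nu = h$, and set $\phi := \Phi$; the bound $\|\phi\|_{H(\curl,D)}\le C\|h\|_{H^{-1/2}(\dive_\Gamma,\partial D)}$ then follows from continuity of the solution operator for this problem, which is itself a consequence of the trace space characterization. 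Once the tangential trace $\gamma_t:H(\curl,D)\to H^{-1/2}(\dive_\Gamma,\partial D)$ has been shown to be continuous with closed range, surjectivity reduces to injectivity of its dual, which is a direct computation using Green's identity.
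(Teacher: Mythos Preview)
The paper does not prove this lemma at all: it is stated as a standard result with citations to \cite{Alonso, BC, Girault}, and no argument is given. Your sketch is therefore more than the paper provides, and the duality approach you outline for parts (i) and (ii) is exactly the classical one found in those references.

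One remark on your surjectivity argument: proposing to solve $-\Delta\Phi+\Phi=0$ in $D$ with the tangential boundary condition $\Phi\times\nu=h$ is somewhat circular, since well-posedness of that boundary value problem presupposes precisely the trace surjectivity and continuous right inverse you are trying to establish. The route actually taken in \cite{Alonso} and \cite{BC} is more direct: one uses a Hodge-type decomposition of $h\in H^{-1/2}(\dive_\Gamma,\Gamma)$ into a surface-gradient part and a surface-curl part, lifts each piece explicitly to $H^1(D)$ or to a vector potential, and assembles $\phi$ from these lifts with controlled norms. Your closing sentence (closed range plus injectivity of the dual) is fine once continuity is known, but it does not by itself produce the bounded right inverse with the stated norm control; that still requires the explicit construction or an open-mapping argument after surjectivity is shown independently. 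Also, the reference you name as ``Buffa--Ciarlet'' is presumably Buffa--Costabel--Sheen \cite{BC}, which is what the paper cites.
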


Here and in what follows, we  denote 
\begin{equation*}
H^{-1/2}(\dive_\Gamma, \Gamma): = \Big\{ \phi \in [H^{-1/2}(\Gamma)]^3; \; \phi \cdot \nu = 0 \mbox{ and } \dive_\Gamma \phi \in H^{-1/2}(\Gamma) \Big\},
\end{equation*}
\begin{equation*}
\| \phi\|_{H^{-1/2}(\dive_\Gamma, \Gamma)} : = \| \phi\|_{H^{-1/2}(\Gamma)} +  \| \dive_\Gamma \phi\|_{H^{-1/2}(\Gamma)}.
\end{equation*}

\medskip 
We finally  recall the following  change of variables for the Maxwell equations. It is the basic ingredient for cloaking using transformation optics for electromagnetic fields. 
   
   \begin{lemma}
   	\label{pre:cha}
   	Let $D, D'$ be two open bounded connected subsets of $\R^3$ and  $F: D \rightarrow D'$ be a bijective map such that $F\in C^1(\bar{D}), F^{-1} \in C^1(\bar{D}')$. 
   	Let $\eps, \, \mu \in [L^{\infty}(D)]^{3\times 3}$, and $j \in [L^2(D)]^3$. Assume that $(E, H) \in [H(\curl, D)]^2$ is a solution of  the Maxwell equations
   	\begin{equation}
   	\begin{cases}
   	\nabla \times E = i\omega \mu H & \text{ in } D,\\[6pt]
   	\nabla \times H = -i\omega \eps E +j & \text{ in } D.
   	\end{cases}
   	\end{equation}
   	Set, in $D'$,  
   	$$
   	E': = F*E: = (DF^{-T}E)\circ F^{-1} \quad \mbox{ and } \quad H' := F*H: =   (DF^{-T}H)\circ F^{-1}.
   	$$
   	Then $(E', H') \in [H(\curl, D')]^2$ satisfies 
   	\begin{equation}
   	\begin{cases}
   	\nabla \times E' = i\omega\mu' H' &\text{ in } D',\\[6pt]
   	\nabla \times H' = -i\omega\eps' E' +j' & \text{ in } D',
   	\end{cases}
   	\end{equation}
   	where 
   	\[\eps'  := F_* \eps := \frac{DF \eps DF^{T}}{|\det F|}\circ F^{-1}, \quad  \mu' := F_* \mu := \frac{DF \mu DF^{T}}{|\det F|}\circ F^{-1}, \quad \mbox{and} \quad  j' : =F_*j= \frac{DF j}{|\det F|}\circ F^{-1}. \]
 \end{lemma}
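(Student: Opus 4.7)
The plan is to first verify that the pullbacks $(E',H') \in [H(\curl, D')]^2$ and then establish the central identity
\begin{equation*}
\nabla_y \times \bigl( DF^{-T}(x) \phi(x) \bigr) \big|_{x = F^{-1}(y)} = \frac{DF(x)}{\det DF(x)} \bigl( \nabla_x \times \phi(x) \bigr) \Big|_{x = F^{-1}(y)}
\end{equation*}
(with $|\det DF|$ in place of $\det DF$ in the orientation-reversing case), first for $\phi \in [C^1(\bar D)]^3$. Once this identity is in hand, substitution into the two Maxwell equations satisfied by $(E,H)$ immediately yields the transformed system for $(E',H')$ with the stated coefficients $\eps' = F_*\eps$, $\mu' = F_*\mu$, and the source $j' = F_*j$: the factor $DF$ on the left combines with $\mu$ (or $\eps$) and a second $DF^{-T}H$ (or $DF^{-T}E$) to produce exactly $DF \mu DF^T / |\det DF|$ applied to $H'$, matching $F_*\mu \cdot H'$.

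For the key pullback identity, I would proceed by a direct component-wise calculation: writing $\phi = (\phi_1, \phi_2, \phi_3)$ and applying the chain rule to $DF^{-T}(F^{-1}(y)) \phi(F^{-1}(y))$ produces $\nabla_y \times (DF^{-T}\phi \circ F^{-1})$ as a sum of terms; using the classical identities satisfied by the cofactors of $DF$ (equivalently, the algebraic identity that the columns of $\operatorname{cof}(DF)/\det DF$ are divergence-free rows of $DF^{-1}$), all terms involving second derivatives of the components of $F$ cancel and what remains is precisely $DF\,(\nabla_x \times \phi)\circ F^{-1}/\det DF$. A more conceptual route is to view $E$ as a $1$-form, note that $\omega_{E'} = (F^{-1})^* \omega_E$, and use that exterior differentiation commutes with pullback; the stated formulas then follow from the musical isomorphism identities relating $\flat$, $\sharp$ and the Hodge star under a $C^1$ change of coordinates.

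To pass from smooth $\phi$ to $\phi \in H(\curl, D)$, I would use the density of $[C^1(\bar D)]^3$ in $H(\curl, D)$: pick a sequence $E_n \to E$ in $H(\curl, D)$, apply the identity to each $E_n$, and pass to the limit, noting that $E \mapsto (DF^{-T}E)\circ F^{-1}$ is bounded from $[L^2(D)]^3$ to $[L^2(D')]^3$ (since $DF^{-T}$ is bounded and the change of variables $y = F(x)$ has bounded Jacobian), and similarly for the curl. This shows in one stroke that $E' \in H(\curl, D')$ with
\begin{equation*}
(\nabla \times E') = \tfrac{1}{|\det DF|} DF (\nabla \times E) \circ F^{-1},
\end{equation*}
interpreted distributionally on $D'$. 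The source term $j$ transforms like the curl (as it appears on the right-hand side of the second Maxwell equation), which is why $j' = F_* j = DF j /|\det DF| \circ F^{-1}$ rather than the $\eps$-type transformation.

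The main obstacle is not conceptual but bookkeeping: carrying out the component-wise chain rule computation cleanly and recognizing the cofactor identities that make the cancellation happen, or equivalently setting up the differential-forms language carefully enough to justify the commutation of $d$ and pullback under only $C^1$ regularity. Once that algebraic step is done, the rest is routine matrix manipulation and a density argument.
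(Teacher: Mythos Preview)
Your outline is correct and follows the standard route for this change-of-variables lemma: establish the curl transformation law first for smooth fields (via the cofactor/Piola identity or, equivalently, via pullback of $1$-forms and naturality of $d$), then extend by density, and finally substitute into the system. One small technical point: the density of $[C^1(\bar D)]^3$ in $H(\curl,D)$ requires some regularity of $\partial D$ (Lipschitz suffices); the hypothesis $F\in C^1(\bar D)$, $F^{-1}\in C^1(\bar{D}')$ makes this harmless here, but an alternative that sidesteps the issue entirely is to verify the identity $\nabla_y\times E' = \frac{DF}{|\det DF|}(\nabla_x\times E)\circ F^{-1}$ directly in the sense of distributions by testing against $\varphi\in[C_c^\infty(D')]^3$ and changing variables in the integral.

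As for the comparison: the paper does not prove this lemma at all. It is presented there as a recalled, well-known fact (``We finally recall the following change of variables for the Maxwell equations''), with no argument supplied. So there is nothing in the paper to compare your proof against; your sketch is exactly the kind of argument one would give to justify it.
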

   
   \begin{remark}\rm It is worth noting the difference of $F *$ in the definition of $E'$ and $H'$, and $F_*$ in the definition of $\eps', \, \mu',$ and $j'$.
   	
   \end{remark}

 \section{Proofs of the main results}\label{sec-main}

This section is devoted to the proof of Theorems~\ref{thm1}, \ref{thm1.1}, and \ref{thm1.2} and is organized as follows. In the first subsection, we establish various results related to $(\pbE_\rho, \pbH_\rho)$. The proof of Theorem~\ref{thm1} is given in the second subsection and the ones of Theorems~\ref{thm1.1} and \ref{thm1.2} are given in the third subsection. 

 \subsection{Some useful lemmas}  \label{sect-lemma}

In this section, $D\subset B_1$ denotes a smooth open bounded subset of $\R^3$, and $\eps$ and  $\mu$   denote two $3\times 3$ matrices $\eps, \mu$  defined in $D$ which are both real, symmetric,  and uniformly elliptic in $D$. We also assume that 
$D$ and $\mR^3 \setminus D$ are simply connected and  $\eps, \mu$ are piecewise $C^1$.  The following lemma provides the stability of the exterior problem in the  low frequency regime. 

\medskip

\begin{lemma}
\label{lem:ex-12}
Let  $0 < \rho< \rho_0$ and let  $(E_\rho, H_{\rho}) \in [H_{\loc}(\curl, \R^3 \setminus D)]^2$ be a radiating solution to the system
\begin{equation}\label{eq:ex}\begin{cases}
\nabla \times E_{\rho}  = i\rho H_{\rho} & \text{ in } \mathbb{R}^3\setminus D,\\[6pt]
\nabla \times H_{\rho} = -i\rho E_{\rho}  & \text{ in } \mathbb{R}^3\setminus D.
\end{cases}
\end{equation}
We have, for $R>1$, 
\begin{equation}\label{lem:ex-conclusion}
\|(E_{\rho}, H_{\rho})\|_{H(\curl, B_R\setminus D)} \leq C_R \Big( \|E_{\rho}\times \nu\|_{H^{-1/2}(\partial D)}+ \|H_{\rho}\cdot \nu\|_{H^{-1/2}(\partial D)}\Big)
\end{equation}
and 
\begin{equation}\label{lem:ex0-conclusion}
\|(E_{\rho}, H_{\rho})\|_{H(\curl, B_R\setminus D)} \leq C_R\Big(\|E_{\rho} \times \nu \|_{H^{-1/2}(\partial D)}+ \|H_{\rho}\times \nu\|_{H^{-1/2}(\partial D)}\Big),
\end{equation}
for some positive constant $C_R$ depending only on $\rho_0$, $D$, and $R$.  
\end{lemma}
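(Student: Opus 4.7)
My plan is to prove both estimates \eqref{lem:ex-conclusion} and \eqref{lem:ex0-conclusion} by a compactness--uniqueness argument, the only subtlety being to rule out a nontrivial limit in the degenerate low-frequency case $\rho_\ast = 0$.

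Suppose \eqref{lem:ex-conclusion} fails. Then there exist $\rho_n \in (0,\rho_0)$ and radiating solutions $(E_n, H_n)$ of \eqref{eq:ex} with $\rho = \rho_n$ such that
\[
\|(E_n, H_n)\|_{H(\curl, B_R \setminus D)} = 1, \qquad \|E_n \times \nu\|_{H^{-1/2}(\partial D)} + \|H_n \cdot \nu\|_{H^{-1/2}(\partial D)} \to 0,
\]
and, up to subsequence, $\rho_n \to \rho_\ast \in [0,\rho_0]$. Taking divergences in \eqref{eq:ex} (using $\rho_n > 0$) gives $\dive E_n = \dive H_n = 0$. The one piece of structure I will need in addition is a \emph{zero-flux identity}: on every closed component $\Gamma_i$ of $\partial D$, the integrand $(\nabla \times H_n)\cdot\nu$ depends only on intrinsic surface derivatives of the tangential trace of $H_n$, so Stokes' theorem on $\Gamma_i$ gives $\int_{\Gamma_i}(\nabla\times H_n)\cdot\nu\,dS = 0$; combined with \eqref{eq:ex} this yields $\int_{\Gamma_i} E_n\cdot\nu = \int_{\Gamma_i} H_n\cdot\nu = 0$ for every $n$.

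Next I extract a strong limit. On each $B_R\setminus D$ the hypotheses of Lemma \ref{lem:compact} are satisfied with $\epsilon = I$: divergences vanish (hence converge), $E_n\times\nu$ and $H_n\cdot\nu$ converge in $H^{-1/2}(\partial D)$, and on $\partial B_R$ the fields satisfy the vector Helmholtz equation $\Delta u + \rho_n^2 u = 0$, so interior elliptic regularity gives uniform smoothness in a neighbourhood of $\partial B_R$ and hence strong trace convergence there. A diagonal argument over $R_k \uparrow \infty$ produces a limit $(E_\ast, H_\ast)$ with $(E_n, H_n) \to (E_\ast, H_\ast)$ strongly in $L^2_{\loc}(\mR^3 \setminus D)$; combining this with $\nabla \times E_n = i\rho_n H_n$ upgrades convergence to strong $H(\curl, B_R \setminus D)$, so $\|(E_\ast, H_\ast)\|_{H(\curl, B_R \setminus D)} = 1$. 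The limit satisfies $\nabla\times E_\ast = i\rho_\ast H_\ast$, $\nabla\times H_\ast = -i\rho_\ast E_\ast$ in $\mR^3\setminus D$ with $E_\ast\times\nu = 0$, $H_\ast\cdot\nu = 0$ on $\partial D$, together with the inherited zero-flux identities on every $\Gamma_i$.

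I now derive a contradiction by showing $(E_\ast, H_\ast) \equiv 0$. If $\rho_\ast > 0$, passing to the limit in the Stratton--Chu representation used in the proof of Lemma \ref{lem-SC-1} shows $(E_\ast, H_\ast)$ is radiating at frequency $\rho_\ast$, and the classical Maxwell exterior uniqueness theorem applied with $E_\ast\times\nu = 0$ forces it to vanish. If $\rho_\ast = 0$, Lemma \ref{lem-SC-1} passed to the limit gives $|(E_\ast, H_\ast)(x)| = O(|x|^{-2})$ at infinity, and the system reduces to an electrostatic one: Lemma \ref{uniquenessstatic} applied to $H_\ast$ (using $H_\ast\cdot\nu = 0$) yields $H_\ast = 0$, while Lemma \ref{uniquenessstaticE} applied to $E_\ast$ (using $E_\ast\times\nu = 0$ together with the zero-flux identity on each $\Gamma_i$) yields $E_\ast = 0$. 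The second inequality \eqref{lem:ex0-conclusion} is handled identically, except that in the $\rho_\ast = 0$ step one applies Lemma \ref{uniquenessstaticE} to both $E_\ast$ and $H_\ast$ since now $E_\ast\times\nu = H_\ast\times\nu = 0$ on $\partial D$.

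The main obstacle in this plan is the degenerate case $\rho_\ast = 0$: the Maxwell system loses its ellipticity and boundary data like $E_\ast\times\nu = 0$ alone are insufficient for static uniqueness. The missing piece is precisely the zero-flux condition $\int_{\Gamma_i} E_\ast\cdot\nu = 0$ required by Lemma \ref{uniquenessstaticE}; establishing it before passing to the limit, via Stokes' theorem on the closed surface $\Gamma_i$ applied to $\nabla\times H_n$, is the subtle step that makes the whole scheme go through.
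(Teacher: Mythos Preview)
Your proof is correct and follows essentially the same route as the paper: a contradiction argument, compactness extraction via Lemma~\ref{lem:compact} (applying part~i) to $E_n$ and part~ii) to $H_n$, with elliptic regularity handling the traces on $\partial B_R$), Stratton--Chu for the $O(|x|^{-2})$ decay of the limit, and then Lemmas~\ref{uniquenessstatic} and~\ref{uniquenessstaticE} to kill the static limit. You have also correctly identified the subtle point---the zero-flux condition $\int_{\Gamma_i} E_\ast\cdot\nu = 0$ needed for Lemma~\ref{uniquenessstaticE}---and obtained it exactly as the paper does, by writing $E_n = \frac{1}{-i\rho_n}\nabla\times H_n$ and using that $\int_{\Gamma_i}(\nabla\times H_n)\cdot\nu = 0$ on a closed surface. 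The only minor omission is that the diagonal argument over $R_k\uparrow\infty$ presupposes uniform boundedness of $(E_n,H_n)$ on $B_{R_k}\setminus D$ for each $k$, which you should state follows from Lemma~\ref{lem-SC-1} before invoking elliptic regularity near $\partial B_{R_k}$.
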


\begin{proof} We begin with the proof of \eqref{lem:ex-conclusion}. 
Since $(E_{\rho}, H_{\rho})$ satisfies (\ref{eq:ex}), it suffices to prove that
\begin{equation}\label{goal}\|(E_{\rho}, H_{\rho})\|_{L^2(B_R\setminus D)} \leq C_R \Big(\|E_{\rho}\times \nu\|_{H^{-1/2}(\partial D)}+ \|H_{\rho}\cdot \nu\|_{H^{-1/2}(\partial D)} \Big)
 \end{equation}
for $R> 3$. Fixing $R> 3$, we prove \eqref{goal}  by contradiction. Suppose that there exist a sequence $(\rho_n)_{n \in \N} \subset (0, \rho_0)$ and  a sequence of radiating solutions $\big((E_n, H_n) \big)_{n \in \N} \subset [H(\curl, \R^3\setminus D)]^2$  of the system
 \begin{equation}\label{2.3} \begin{cases}
\nabla \times E_{n}  = i\rho_n H_{n} & \text{ in } \mathbb{R}^3\setminus D,\\[6pt]
\nabla \times H_{n} = -i\rho_n E_{n}  & \text{ in } \mathbb{R}^3\setminus D,
\end{cases}\end{equation}
such that 
 \begin{equation}\label{2.1}\|(E_n, H_n)\|_{L^2( B_R\setminus D)} = 1 \text{ for } n\in \N,\end{equation}
 and
\begin{equation}\label{2.2} \lim_{n \to 0} \Big( \|E_n\times \nu\|_{H^{-1/2}(\partial D)}+\|H_n\cdot \nu\|_{H^{-1/2}(\partial D)} \Big) = 0. \end{equation}
Without loss of generality, one might assume that $\rho_n \to \rho_\infty$ as $n\rightarrow \infty$ for some $\rho_{\infty}\in [0, \rho_0]$. We only consider the case $\rho_{\infty} = 0$. The case $\rho_{\infty}> 0$ can be proven similarly. 
From \eqref{2.3} and \eqref{2.1}, we have
\begin{equation} \|( E_n, H_n)\|_{H(\curl, B_R\setminus D)}\leq C.\end{equation}
Here and in what follows in this proof, $C$ and $C_r$ denote positive constants independent of $n$. Applying  Lemma~\ref{lem-SC-1}, we have 
\begin{equation} 
\label{2.1bis} \|( E_n, H_n)\|_{H(\curl, B_r \setminus D)}\leq C_r
\end{equation}
for  $r > 3$. Since 
$$
\Delta E_\rho + \rho^2 E_\rho = \Delta H_\rho + \rho^2 H_\rho = 0 \mbox{ in } \R^3 \setminus D, 
$$
it follows from \eqref{2.1bis} that, for $r >3$,  
$$
\|( E_n, H_n)\|_{H^1(B_{r+1} \setminus B_{r-1})}\leq C_r. 
$$
By the trace theory, we have 
$$
\|( E_n, H_n)\|_{H^{1/2}(\partial B_{r})}\leq C_r. 
$$
Since the embedding of $H^{1/2}(\partial B_r)$ into $H^{-1/2}(\partial B_r)$ is compact, by applying i) of Lemma~\ref{lem:compact} to $(E_n)$  and by applying ii) of Lemma~\ref{lem:compact} to $(H_n)$, without loss of generality, one might assume that $(E_n, H_n)$  converges in $[L^2_{\loc}(\R^3 \setminus D)]^6$. 
Moreover,  the limit $(E, H) \in [H_{\loc}(\R^3 \setminus D)]^2$ satisfies \begin{equation}\label{limitEH}
\left\{\begin{array}{cl}
\nabla\times H = 0 &\text{ in } \R^3\setminus D,\\[6pt]
\dive H = 0 &\text{ in } \R^3\setminus D,\\[6pt]
H\cdot \nu = 0 &\text{ on } \partial D,
\end{array} \right.
\quad \mbox{ and } \quad   \left\{\begin{array}{cl}
\nabla\times E = 0 \text{ in } \R^3\setminus D,\\[6pt]
\dive E = 0 \text{ in } \R^3\setminus D,\\[6pt]
E\times \nu = 0 \text{ on } \partial D.
\end{array} \right.
\end{equation}
Applying Lemma \ref{lem-SC-1} to $(E_n, H_n)$ and letting $n \to + \infty$ ($\rho_n \to 0$), we have
\begin{equation}\label{lem-ex-decay}
\big|\big(E(x), H(x) \big) \big| = O(|x|^{-2})\mbox{ for large } |x|. 
\end{equation}
On the other hand,  since $E_n = \dsp  -\frac{1}{i \rho_n} \nabla \times H_n$ in $\mR^3 \setminus D$, we have
\begin{equation}\label{lem-ex-p1}
\int_{ \Gamma_i} E_n \cdot \nu = 0 \mbox{ for all  connected component $\Gamma_i$ of $\partial D$}. 
\end{equation}
Since $(E_n)$ converges to $E$ in $[L^2_{\loc}(\R^3 \setminus D)]^3$ and $\dive E_n = \dive E = 0$ in $\R^3 \setminus D$, it follows that $(E_n)$ converges to $E$ in $H_{\loc}(\dive, \R^3 \setminus D)$. This in turn implies, by \eqref{lem-ex-p1}, 
\begin{equation}\label{lem-ex-p2}
\int_{ \Gamma_i} E \cdot \nu = 0 \mbox{ for all  connected component $\Gamma_i$ of $\partial D$}. 
\end{equation}
Applying Lemma~\ref{uniquenessstatic} to $H$, we derive from  \eqref{limitEH} and  \eqref{lem-ex-decay} that 
\begin{equation}\label{lem-ex-H=0}
H = 0 \mbox{ in } \R^3 \setminus D. 
\end{equation}
Similarly, applying Lemma~\ref{uniquenessstaticE} to $E$, from  \eqref{limitEH}, \eqref{lem-ex-decay}, and \eqref{lem-ex-p2},  we obtain 
\begin{equation}\label{lem-ex-E=0}
E = 0 \mbox{ in } \R^3 \setminus D. \footnote{When $\rho_\infty> 0$, instead of being a solution of \eqref{limitEH},  $(E, H)$ is the radiating solution of \eqref{eq:ex} with $\rho = \rho_\infty$ and $E \times \nu = 0$ on $\partial D$. This also implies  that $(E, H) = (0, 0)$ in $\R^3 \setminus D$.}
\end{equation}
From \eqref{2.1}, \eqref{lem-ex-H=0}, and \eqref{lem-ex-E=0} and the fact that $(E_n, H_n)$ converges to $(E, H)$ in $L^2_{\loc}(\R^3 \setminus D)$, we reach a contradiction. The proof of \eqref{lem:ex-conclusion} is complete.

\medskip 
We next deal with \eqref{lem:ex0-conclusion}.
The proof of \eqref{lem:ex0-conclusion} is similar to the one of \eqref{lem:ex-conclusion}. However, instead of obtaining \eqref{limitEH} and \eqref{lem-ex-p2}, we have 
\begin{equation*}
\left\{\begin{array}{cl}
\nabla\times H = 0 &\text{ in } \R^3\setminus D,\\[6pt]
\dive H = 0 &\text{ in } \R^3\setminus D,\\[6pt]
H \times \nu = 0 &\text{ on } \partial D, 
\end{array} \right.
\quad \mbox{ and } \quad   \left\{\begin{array}{cl}
\nabla\times E = 0 \text{ in } \R^3\setminus D,\\[6pt]
\dive E = 0 \text{ in } \R^3\setminus D,\\[6pt]
E\times \nu = 0 \text{ on } \partial D, 
\end{array} \right.
\end{equation*}
and 
\begin{equation*}
\int_{ \Gamma} H \cdot \nu = \int_{ \Gamma} E \cdot \nu = 0 \mbox{ for all  connected component $\Gamma$ of $\partial D$}. 
\end{equation*}
By the same arguments, we can derive that $(E, H) = (0, 0)$ in $\mR^3$, which also yields a contradiction. The details are left to the reader. \end{proof}

\begin{remark}\label{rem-div} \rm We have
$$
\dive_\Gamma (E_\rho \times \nu) = \nabla \times E_\rho \cdot \nu = i \rho H_\rho \cdot \nu \mbox{ on } \partial D. 
$$
It follows that 
$$
 \|E_{\rho}\times \nu\|_{H^{-1/2}(\dive_\Gamma, \partial D)} \le \|E_{\rho}\times \nu\|_{H^{-1/2}(\partial D)}+ \|H_{\rho}\cdot \nu\|_{H^{-1/2}(\partial D)} \le \frac{1}{\rho} \|E_{\rho}\times \nu\|_{H^{-1/2}(\dive_\Gamma, \partial D)}. 
$$
\end{remark}

\medskip 
The next lemma gives an estimate for solutions of Maxwell's equations in the low frequency regime,  which in turn implies an estimate for the effect of a small inclusion
after a change of variables. 

\begin{lemma}\label{lem-FF}
	Let $0 < \rho < 1/2$, $R>1/2$,  and let $(E_{\rho}, H_{\rho})\in [H_{\loc}(\curl, \R^3\setminus D)]^2$ be a   radiating solution to the system
	\begin{equation}
		\label{exterior}
		\begin{cases}
			\nabla\times E_{\rho} = i\omega \rho H_{\rho} &\mbox{ in } \R^3\setminus D,\\[6pt]
			\nabla\times H_{\rho} = -i\omega \rho E_{\rho}&\mbox{ in } \R^3\setminus D.
		\end{cases}
	\end{equation}
	We have  
	\[
	\Big|\Big(E_{\rho}(x), H_{\rho}(x)\Big)\Big|\leq C\rho^3\|(E_{\rho}, H_{\rho})\|_{L^2(B_2\setminus D)} \quad  \mbox{ for } x \in B_{3R/\rho}\setminus B_{2R/\rho}, 
	\]
	for some constant $C$ depending only  $ R$.
\end{lemma}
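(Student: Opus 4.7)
The strategy is to rescale by a factor of $\rho$ so that the low-frequency problem outside $D$ becomes a fixed-frequency problem outside the small inclusion $\rho D$, then apply Lemma~\ref{lem-SC-1} to the rescaled pair on the far shell. Set $\tilde E(z) := E_\rho(z/\rho)$ and $\tilde H(z) := H_\rho(z/\rho)$ for $z \in \R^3 \setminus \rho D$. A direct computation shows $(\tilde E, \tilde H)$ is a radiating solution to
\[
\nabla_z \times \tilde E = i\omega \tilde H, \qquad \nabla_z \times \tilde H = -i\omega \tilde E, \qquad \text{in } \R^3 \setminus \rho D,
\]
at the fixed frequency $\omega$. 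Since $\rho < 1/2$ gives $\rho D \subset\subset B_1$, Lemma~\ref{lem-SC-1} applies with $D$ replaced by $\rho D$ and $k$ by $\omega$, yielding
\[
\bigl|(\tilde E(z), \tilde H(z))\bigr| \le \frac{C}{|z|^2}(1+\omega|z|) \,\|(\tilde E, \tilde H)\|_{L^2(B_3 \setminus \rho D)} \qquad \text{for } |z|>3.
\]

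For $x \in B_{3R/\rho}\setminus B_{2R/\rho}$, set $z := \rho x \in B_{3R}\setminus B_{2R}$; for $R$ larger than a fixed absolute constant this ensures $|z|>3$ (the remaining case $R \in (1/2, R_0)$ is handled by choosing a smaller integration surface in the Stratton--Chu formula and adjusting constants accordingly). The displayed bound then gives $|(E_\rho(x), H_\rho(x))| = |(\tilde E(z), \tilde H(z))| \le C_R \|(\tilde E, \tilde H)\|_{L^2(B_3 \setminus \rho D)}$, so the task reduces to showing
\[
\|(\tilde E, \tilde H)\|_{L^2(B_3 \setminus \rho D)}^2 \le C \rho^3 \|(E_\rho, H_\rho)\|_{L^2(B_2 \setminus D)}^2.
\]
Split $B_3 \setminus \rho D = (B_{2\rho}\setminus \rho D) \cup (B_3 \setminus B_{2\rho})$. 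The inner shell is the $\rho$-dilate of $B_2 \setminus D$, so the change of variables $z = \rho y$ gives $\|(\tilde E, \tilde H)\|_{L^2(B_{2\rho}\setminus \rho D)}^2 = \rho^3 \|(E_\rho, H_\rho)\|_{L^2(B_2 \setminus D)}^2$, producing the target $\rho^3$ directly. For the outer annulus, which rescales to $B_{3/\rho}\setminus B_2$ in the original variables, a second application of Lemma~\ref{lem-SC-1} to $(E_\rho, H_\rho)$ at the small frequency $\omega\rho$ gives the pointwise bound $|(E_\rho, H_\rho)(x)| \le C|x|^{-2}(1+\omega\rho|x|) \|(E_\rho, H_\rho)\|_{L^2(B_3 \setminus D)}$ for $|x|>3$; squaring and integrating the weight $|x|^{-4}(1+\omega\rho|x|)^2$ over $B_{3/\rho}\setminus B_3$ gives a contribution of order $\|(E_\rho, H_\rho)\|_{L^2(B_3 \setminus D)}^2$, so after the change of variables $z = \rho y$ this outer contribution is again dominated by $\rho^3 \|(E_\rho, H_\rho)\|_{L^2(B_3 \setminus D)}^2$.

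The main obstacle is the uniformity in $\rho$ of the final comparison $\|(E_\rho, H_\rho)\|_{L^2(B_3 \setminus D)} \le C\|(E_\rho, H_\rho)\|_{L^2(B_2 \setminus D)}$: because the frequency $\omega\rho$ is small, one cannot invoke a standard elliptic/Helmholtz estimate on the shell $B_3 \setminus B_2$ without introducing a $\rho$-dependent loss. The resolution is to view $(E_\rho, H_\rho)$ as a div-curl pair (both fields are divergence-free in $\R^3 \setminus D$ because $\omega\rho>0$), and to use the exterior stability estimate \eqref{lem:ex0-conclusion} of Lemma~\ref{lem:ex-12} together with the trace bounds of Lemma~\ref{lem:trace}: the tangential traces $\|E_\rho \times \nu\|_{H^{-1/2}(\dive_\Gamma,\partial B_2)}$ and $\|H_\rho \times \nu\|_{H^{-1/2}(\dive_\Gamma,\partial B_2)}$ are controlled by $\|(E_\rho, H_\rho)\|_{H(\curl, B_2 \setminus D)} \le (1+\omega\rho_0)\|(E_\rho, H_\rho)\|_{L^2(B_2 \setminus D)}$ (since $\nabla\times E_\rho = i\omega\rho H_\rho$ and $\nabla\times H_\rho = -i\omega\rho E_\rho$), and Lemma~\ref{lem:ex-12} applied in the annulus $B_3 \setminus \overline B_2$ then transfers this bound to the whole of $B_3 \setminus D$ with a constant depending only on $\omega\rho_0$, closing the argument.
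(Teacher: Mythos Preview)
Your rescaling approach has a genuine gap that loses a factor of $\rho^{3/2}$. You correctly reduce to the bound
\[
\bigl|(E_\rho(x),H_\rho(x))\bigr| \le C_R\,\|(\tilde E,\tilde H)\|_{L^2(B_3\setminus\rho D)},
\]
and you then set out to prove $\|(\tilde E,\tilde H)\|_{L^2(B_3\setminus\rho D)}^2 \le C\rho^3\|(E_\rho,H_\rho)\|_{L^2(B_2\setminus D)}^2$. But this is a bound on the \emph{squared} norm; taking square roots yields only $\|(\tilde E,\tilde H)\|_{L^2}\le C\rho^{3/2}\|(E_\rho,H_\rho)\|_{L^2}$, and hence only $|(E_\rho,H_\rho)(x)|\le C_R\rho^{3/2}\|(E_\rho,H_\rho)\|_{L^2(B_2\setminus D)}$. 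Indeed the inner-shell change of variables gives \emph{exactly} $\|(\tilde E,\tilde H)\|_{L^2(B_{2\rho}\setminus\rho D)}^2=\rho^3\|(E_\rho,H_\rho)\|_{L^2(B_2\setminus D)}^2$, so there is no way to upgrade the exponent to $\rho^6$ on the squared norm, and your scheme cannot do better than $\rho^{3/2}$.

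The reason is that Lemma~\ref{lem-SC-1} is a generic far-field bound and does not see the low-frequency cancellations that are responsible for the full $\rho^3$. The paper's argument works directly with the Stratton--Chu representation at the small wavenumber $k=\omega\rho$ on $\partial B_1$, and exploits three cancellations: (i) the exact vanishing $\int_{\partial B_1}\nu\cdot E_\rho\,ds=0$; (ii) the smallness $\bigl|\int_{\partial B_1}E_\rho\times\nu\,ds\bigr|+\bigl|\int_{\partial B_1}H_\rho\times\nu\,ds\bigr|\le C\rho\|(E_\rho,H_\rho)\|_{L^2(B_2\setminus D)}$, obtained via an auxiliary absorbing interior problem; and (iii) the kernel expansions $|G_k(x,y)-G_k(x,0)|\le C\rho^2$ and $|\nabla_xG_k(x,y)-\nabla_xG_k(x,0)|\le C\rho^3$ for $|x|\sim R/\rho$, $y\in\partial B_1$. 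Subtracting the leading multipole terms (which vanish by (i)--(ii)) and estimating the remainders by (iii) is precisely what produces $\rho^3$; any approach that treats the boundary data on $\partial B_1$ as generic $O(1)$ data, as yours does through Lemma~\ref{lem-SC-1}, will miss this.
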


\begin{proof} We only deal with small $\rho$, since otherwise the conclusion is just a consequence of Stratton-Chu's formula. We have,  for $x \in \mR^3 \setminus \bar B_1$,  (see \eqref{SC-E}) 
\begin{multline}\label{SC-E-FF}
E_{\rho}(x)=   \int_{\partial B_1} \nabla_x G_k (x, y) \times \big( \nu(y) \times E_{\rho} (y) \big)  dy \\[6pt]
    + i \omega \rho \int_{\partial B_1}\nu(y)\times H_{\rho}(y)G_k(x,y)dy - \int_{\partial B_1}\nu(y)\cdot E_{\rho}(y) \nabla_x G_k(x, y)dy,
\end{multline}
where $k = \omega\rho$.
We claim that 
\begin{equation}\label{FF-1-p1}
\left| \int_{\partial B_1} E_{\rho} \times \nu \right|  \le C \rho \| (E_{\rho}, H_{\rho}) \|_{L^2(B_2\setminus D)},
\end{equation}
and 
\begin{equation}\label{FF-1-p2}
\left| \int_{\partial B_1} H_{\rho} \times \nu \right|  \le C \rho \| (E_{\rho}, H_{\rho}) \|_{L^2(B_2\setminus D)}. 
\end{equation}
Assuming this, we continue the proof. We have 
\begin{equation}\label{FF-2}
\int_{\partial B_1} \nu \cdot E_{\rho}\, ds = \frac{1}{i \omega \rho} \int_{\partial B_1} \nu \cdot \nabla \times H_{\rho} \,ds= 0. 
\end{equation}
Rewrite  \eqref{SC-E-FF} under the form  
\begin{align*}
& E_{\rho}(x)= \\[6pt] 
&   \int_{\partial B_1} \nabla_x G_k (x, 0) \times \big( \nu(y) \times E_{\rho} (y) \big)  dy  +  \int_{\partial B_1} \big(\nabla_x G_k (x, y) -  \nabla_x G_k (x, 0) \big) \times \big( \nu(y) \times E_{\rho} (y) \big)  dy  \\[6pt]
    & + i \omega \rho \int_{\partial B_1}\nu(y)\times H_{\rho}(y) G_k(x,0)dy  + i \omega \rho \int_{\partial B_1}\nu(y)\times H_{\rho}(y) \big( G_k(x,y) - G_k(x,0) \big)dy \\[6pt]
  & - \int_{\partial B_1}\nu(y)\cdot E_{\rho}(y) \nabla_x  G_k(x, 0) dy -  \int_{\partial B_1}\nu(y)\cdot E_{\rho}(y) \big( \nabla_x  G_k(x, y) - \nabla_x G(x, 0)  \big) dy. 
\end{align*}
Using the facts, for $|x| \in (2R/ \rho, 3R / \rho)$ and $y \in \partial B_1$,  
$$
|G_{k}(x, y)  -  G_k(x, 0)|  \le C \rho^2, \quad |\nabla G_{k}(x, y)  - \nabla G_k(x, 0)| \le C \rho^3,
$$
and
$$
\|(E_{\rho}, H_{\rho})\|_{L^2(\partial B_1)} \leq C\|(E_{\rho}, H_{\rho})\|_{L^2(B_2\setminus D)},
$$
we derive from \eqref{FF-1-p1}, \eqref{FF-1-p2}, and  \eqref{FF-2} that
\begin{equation}\label{lem-FF-cl1}
|E_{\rho}(x)| \le C \rho^3 \| (E_{\rho}, H_{\rho}) \|_{L^2(B_2\setminus D)} \mbox{ for }  x \in B_{3R/ \rho} \setminus B_{2R/ \rho}. 
\end{equation}
Similarly, we have 
\begin{equation}\label{lem-FF-cl2}
|H_{\rho}(x)| \le C \rho^3 \| (E_{\rho}, H_{\rho}) \|_{L^2(B_2\setminus D)} \mbox{ for }  x \in B_{3R/ \rho} \setminus B_{2R/ \rho}. 
\end{equation}
The conclusion now follows from \eqref{lem-FF-cl1} and \eqref{lem-FF-cl2}. 

\medskip 
It remains to prove Claims \eqref{FF-1-p1} and \eqref{FF-1-p2}. We only prove \eqref{FF-1-p1}, the proof of \eqref{FF-1-p2} is similar. Let $(\tilde E_{\rho}, \tilde H_{\rho})\in [H(\curl, B_1)]^2$ be the unique solution to the system
\begin{equation}
\label{FF-3}
\begin{cases}
\nabla\times \tilde E_{\rho} = i\omega\rho (1 + i) \tilde H_{\rho} & \mbox{ in } B_1,\\[6pt]
\nabla\times \tilde H_{\rho} = -i\omega\rho (1 + i) \tilde E_{\rho} & \mbox{ in } B_1, \\[6pt]
\tilde E_{\rho}\times \nu = E_{\rho}\times \nu & \mbox{ on } \partial B_1. 
\end{cases}
\end{equation}
The well-posedness of \eqref{FF-3} follows immediately from Lax-Milgram's theorem. From a standard contradiction argument which involves   the  fact that the following  systems
\[
\begin{cases}
\nabla\times E = 0 &\mbox{ in } B_1,\\[6pt]
\dive E = 0 &\mbox{ in } B_1,\\[6pt]
E\times\nu  = 0 &\mbox{ on } \partial B_1,
\end{cases}\quad \mbox{ and } \quad 
\begin{cases}
\nabla\times H = 0 &\mbox{ in } B_1,\\[6pt]
\dive H = 0 &\mbox{ in } B_1,\\[6pt]
H\cdot\nu  = 0 &\mbox{ on } \partial B_1,
\end{cases} 
\]
only have trivial zero solutions, 
 we obtain 
\[ \|(\tilde E_{\rho}, \tilde H_{\rho})\|_{L^2(B_1)} \leq C\big(\|E_{\rho}\times \nu|_{\ext}\|_{H^{-1/2}(\partial B_1)} + \|H_{\rho}\cdot\nu|_{\ext}\|_{H^{-1/2}(\partial B_1)}\big).\]
This implies
\begin{equation}\label{FF-4}\|(\tilde E_{\rho}, \tilde H_{\rho})\|_{L^2(B_1)} \leq C\|(E_{\rho}, H_{\rho})\|_{L^2(B_2\setminus D)}.\end{equation}
Since
\[\left|\int_{\partial B_1}E_{\rho}\times \nu \,ds\right|= \left|\int_{\partial B_1}\tilde E_{\rho}\times \nu \,ds\right| = \left|\int_{B_1}\nabla\times \tilde E_{\rho} \, dx\right| = \left|\int_{B_1} \omega\rho (1 + i) \tilde H_{\rho} dx\right|, \]
Claim \eqref{FF-1-p1} follows from \eqref{FF-4}.

\medskip 
The proof is complete. \end{proof}

\medskip The following compactness result is used in  the proof of Theorems~\ref{thm1}, \ref{thm1.1}, and \ref{thm1.2}. 


\begin{lemma}\label{lem-compact-couple}
Let $\big(E_n, H_n) \big)_n$ be a bounded sequence in $[H(\curl, D)]^2$ and let $\big( (\theta_{1, n}, \theta_{2, n}) \big)_n$ be a convergent sequence in $[L^2(D)]^6$. Assume that 
\begin{equation}\label{lem-CC-EH}
\left\{\begin{array}{cl}
\nabla \times E_n = i \mu H_n + \theta_{1, n} & \mbox{ in } D, \\[6pt]
\nabla \times H_n = - i \eps E_n + \theta_{2, n} & \mbox{ in } D, 
\end{array} \right. 
\end{equation}
and
\begin{equation}\label{lem-CC-EH-bdry}
\big((\nabla  \times E_n \cdot \nu, \nabla \times H_n \cdot \nu ) \big)_n \mbox{ converges in } [H^{-1/2} ( \partial D)]^2.
\end{equation}
Then, up to a  subsequence, $\big( (E_n, H_n) \big)_n$ converges in $[H(\curl, D)]^2$.
\end{lemma}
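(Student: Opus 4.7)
The plan is to extract weak $[H(\curl, D)]^2$-limits, reduce to the zero-limit case by subtraction, then introduce potential corrections that bring the system into the precise form needed to apply Lemma~\ref{lem:compact}(ii), and finally upgrade strong $L^2$-convergence to strong $H(\curl)$-convergence using the equations themselves. In the first step, since $(E_n, H_n)$ is bounded in $[H(\curl, D)]^2$ we pass to a subsequence converging weakly to some $(E, H) \in [H(\curl, D)]^2$; the limit satisfies the analogous Maxwell system with source $(\theta_1, \theta_2) := \lim(\theta_{1,n}, \theta_{2,n})$ and normal-trace data obtained from the hypothesis. Replacing $(E_n, H_n)$ by the differences $(E_n - E, H_n - H)$ and adjusting the sources and boundary data, we may assume $(E_n, H_n) \rightharpoonup 0$ in $[H(\curl, D)]^2$, $(\theta_{1,n}, \theta_{2,n}) \to 0$ in $[L^2(D)]^6$, and $\big((\nabla\times E_n)\cdot \nu, (\nabla\times H_n)\cdot \nu\big) \to 0$ in $[H^{-1/2}(\partial D)]^2$.

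The core of the argument is then to prove strong $L^2$-convergence of the reduced sequence to zero. Define $\psi_n, \phi_n \in H^1(D)/\R$ as the unique weak solutions (by Lax--Milgram) of
\begin{equation*}
\int_D \eps\nabla\psi_n \cdot \nabla\chi \, dx = -i\int_D \theta_{2,n}\cdot \nabla\chi \, dx, \quad
\int_D \mu\nabla\phi_n \cdot \nabla\chi \, dx = i\int_D \theta_{1,n}\cdot \nabla\chi \, dx, \quad \chi \in H^1(D).
\end{equation*}
This choice guarantees that $z_n := \eps\nabla\psi_n + i\theta_{2,n}$ and $w_n := \mu\nabla\phi_n - i\theta_{1,n}$ lie in $H(\dive, D)$ with $\dive z_n = \dive w_n = 0$ and vanishing normal traces on $\partial D$, and that $\nabla\psi_n, \nabla\phi_n \to 0$ strongly in $[L^2(D)]^3$ as $\theta_{i,n} \to 0$. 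Setting $\tilde E_n := E_n - \nabla\psi_n$ and $\tilde H_n := H_n - \nabla\phi_n$, we have $\nabla\times\tilde E_n = \nabla\times E_n$ and $\nabla\times\tilde H_n = \nabla\times H_n$, so $(\tilde E_n, \tilde H_n)$ is bounded in $[H(\curl, D)]^2$. Using the reduced form of \eqref{lem-CC-EH} one computes $\eps\tilde E_n = i\nabla\times H_n - z_n$ and $\mu\tilde H_n = -i\nabla\times E_n - w_n$; consequently $\dive(\eps\tilde E_n) = 0$, $\dive(\mu\tilde H_n) = 0$ in $L^2(D)$, and the normal traces $\eps\tilde E_n\cdot\nu = i(\nabla\times H_n)\cdot\nu$ and $\mu\tilde H_n\cdot\nu = -i(\nabla\times E_n)\cdot\nu$ converge to zero in $H^{-1/2}(\partial D)$. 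Lemma~\ref{lem:compact}(ii) applied to $\tilde E_n$ with matrix $\eps$ and to $\tilde H_n$ with matrix $\mu$ yields subsequences such that $\tilde E_n, \tilde H_n \to 0$ in $[L^2(D)]^3$, and combining with $\nabla\psi_n, \nabla\phi_n \to 0$ we get $(E_n, H_n) \to 0$ in $[L^2(D)]^6$.

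To conclude, the equations \eqref{lem-CC-EH} give $\nabla\times E_n = i\mu H_n + \theta_{1,n} \to 0$ and $\nabla\times H_n = -i\eps E_n + \theta_{2,n} \to 0$ in $[L^2(D)]^3$, which yields $(E_n, H_n) \to 0$ in $[H(\curl, D)]^2$, as required. The decisive point is the introduction of the potential corrections $\nabla\psi_n, \nabla\phi_n$: they absorb the distributional source divergences $\dive\theta_{i,n} \in H^{-1}$, so that the adjusted fields $\eps\tilde E_n, \mu\tilde H_n$ become exactly divergence-free in $L^2$ and their normal traces reduce (up to a factor of $\pm i$) to the boundary data provided by hypothesis, thereby placing us precisely in the setting where condition (ii) of Lemma~\ref{lem:compact} applies. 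Without this correction, the bare hypotheses yield only $H^{-1}(D)$-convergence of the divergence and only weak trace information, which is not directly covered by the compactness lemma, making this the main obstacle that the potential construction overcomes.
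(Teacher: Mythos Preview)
Your proof is correct and takes a genuinely different route from the paper's. The paper applies the Helmholtz-type decomposition of Lemma~\ref{composition} directly to $\eps E_n$, writing $\eps E_n=\eps\nabla q_n+\nabla\times\phi_n$ with $\phi_n\times\nu=0$, and then shows by hand that both $(\nabla q_n)_n$ and $(\nabla\times\phi_n)_n$ are Cauchy in $L^2(D)$: the gradient part via an integration-by-parts identity that uses the convergence of $(\nabla\times H_n)\cdot\nu$ in $H^{-1/2}$, and the curl part via the compact embedding $H^1\hookrightarrow L^2$ applied to $\phi_n$. Your argument instead reduces to the zero-weak-limit case, then builds gradient corrections $\nabla\psi_n,\nabla\phi_n$ from the sources $\theta_{i,n}$ (rather than from $E_n,H_n$ themselves) so that the corrected fields have $\eps\tilde E_n$, $\mu\tilde H_n$ exactly divergence-free in $L^2$ with normal traces equal to $\pm i(\nabla\times H_n)\cdot\nu$, $\mp i(\nabla\times E_n)\cdot\nu$; this places you squarely in the hypotheses of Lemma~\ref{lem:compact}(ii), which you invoke as a black box. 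Your approach is arguably more modular---it reuses the existing compactness lemma rather than reproving a Cauchy property---while the paper's is more self-contained and avoids the preliminary weak-limit subtraction. One small point worth making explicit: after Lemma~\ref{lem:compact}(ii) yields an $L^2$-convergent subsequence of $(\tilde E_n)$, the limit is identified as $0$ because $\tilde E_n\rightharpoonup 0$ weakly (you arranged $E_n\rightharpoonup 0$ and $\nabla\psi_n\to 0$); this is implicit in your write-up but could be stated.
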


\begin{remark} \rm A comparison with Lemma~\ref{lem-compact-couple}  is necessary. The difference between Lemma~\ref{lem-compact-couple} and part $i)$ Lemma \ref{lem:compact} is that  the sequence $(E_n \times \nu)_n$ or $(H \times \nu)_n$ is not required to be convergent in $H^{-1/2}(\partial D)$. The difference between Lemma~\ref{lem-compact-couple} and part $ii)$ Lemma \ref{lem:compact} is that  the sequence $\big( \dive (\eps E_n) \big)_n$ or $\big(\dive (\mu H_n) \big)_n$ is not required to be bounded in $L^2(D)$. Nevertheless, in Lemma~\ref{lem-compact-couple},   \eqref{lem-CC-EH} is assumed.
\end{remark}

\begin{proof}
It suffices to prove that,  up to a  subsequence, $\big((E_n, H_n) \big)_n$ converges in $[L^2(D)]^6$. By Lemma \ref{composition}, there exist $(q_n)_{n }\subset H^1(D)$ and  $(\phi_n)_{n}\subset [H^1(D)]^3$ such that, for all $n$,  
\begin{equation}\label{lem-C-C-1}
\eps E_n = \eps \nabla q_n + \nabla\times \phi_n \mbox{ in D}, \quad  \dive \phi_n = 0  \text{ in } D, \quad \mbox{ and } \quad \phi_n \times \nu = 0  \text{ on } \partial D.
\end{equation}
Moreover, we have 
\begin{equation}\label{lem-compact-couple-estimate1}\|q_n\|_{H^1(D)}+\|\phi_n\|_{[H^1(D)]^3}\leq C\|E_n\|_{L^2(D)} \leq C,\end{equation}
 for some positive constant $C$ independent of $n$. 
From \eqref{lem-compact-couple-estimate1}, without loss of generality, one might assume that 
\begin{equation}
\label{lem-compact-couple-converge1}
(q_n)_n \mbox{ and } (\phi_n)_n  \mbox{ converge in $L^2(D)$ and $[L^2(D)]^3$ respectively}. 
\end{equation}
From \eqref{lem-C-C-1} and an integration by parts, we derive that, for all $n$,  
\begin{equation}\label{lem-compact-couple-equation1}
\int_D \eps \nabla q_n \cdot \nabla p \, dx = \int_D \eps E_n \cdot \nabla p\, dx  \mbox{ for } p \in H^1(D).  
\end{equation}
This implies,  by \eqref{lem-CC-EH},  for $m, n \in \N$, 
\begin{align*}
\int_D \eps \nabla (q_n-q_m) \cdot \nabla (\bar{q}_n-\bar{q}_m) \, dx & = \int_D \eps (E_n-E_m) \cdot \nabla (\bar q_n-\bar q_m)\, dx,\\[6pt]
&=i\int_D \Big(\nabla\times\big( H_n-H_m\big) - (\theta_{2,n} - \theta_{2,m})\Big) \cdot \nabla (\bar q_n- \bar q_m)\, dx.\end{align*}
An integration by parts yields
\begin{multline*}
\int_D \eps \nabla (q_n-q_m) \cdot \nabla  (\bar q_n- \bar q_m) \, dx \\=i\int_{\partial D}\nabla\times\big( H_n-H_m\big)\cdot \nu \; \;  (\bar q_n-\bar q_m)\, ds - i\int_D(\theta_{2,n} - \theta_{2,m}) \cdot \nabla (\bar q_n-\bar q_m)\, dx.
\end{multline*}
By \eqref{lem-CC-EH-bdry} and the convergence of $(\theta_{1, n}, \theta_{2, n})$ in $[L^2(D)]^6$,  the LHS of the above identity converges to $0$ as $m, n \to \infty$. Hence, by the ellipticity of $\eps$,  $(\nabla q_n)_{n }$ is a Cauchy sequence and thus converges in $[L^2(D)]^3$. From \eqref{lem-C-C-1}, we have 
\begin{equation*}
\int_D \eps^{-1}\nabla\times (\phi_n - \phi_m)\cdot\nabla\times (\bar \phi_n - \bar \phi_m)\, dx  = \int_D \nabla\times (E_n-E_m) \cdot(\bar\phi_n - \bar\phi_m)\, dx.
\end{equation*}
By the ellipticity of $\eps$ and the convergence of $(\phi_n)$ in $L^2(D)$,  we derive that  $\big(\nabla \times \phi_n\big)_{n} $ is a Cauchy sequence in $[L^2(D)]^3$ and thus converges in $[L^2(D)]^3$. 
Since 
$$
E_n = \nabla q_n + \eps^{-1} \nabla \times \phi_n, 
$$
$(E_n)_n$ converges in $[L^2(D)]^3$. 

\medskip 
Similarly, up to a subsequence,  $(H_n)_n$ converges in $[L^2(D)]^3$. 
\end{proof}

Using Lemma~\ref{lem-compact-couple} and applying the Fredhom theory, one can prove  the well-posedness of $(E_0, H_0)$ in Definitions~\ref{limit-1} and \ref{limit-2}.  
The first result in this direction is

\begin{lemma}
\label{lem:fredholm}
Let $\theta_1, \theta_2 \in [L^2(D)]^3$. The system
\begin{equation}
\begin{cases}
\label{maxwell}
\nabla\times E = i\mu H + \theta_1 & \text{ in } D,\\[6pt]
\nabla\times H = -i\eps E + \theta_2 & \text{ in } D,\\[6pt]
\nabla\times  E\cdot \nu = \nabla\times H\cdot \nu = 0 & \text{ on } \partial D, 
\end{cases}
\end{equation}
has a solution $(E, H)$ in $[H(\curl, D)]^2$ if and only if 
\begin{equation}\label{condition}\int_{D}\theta_2\cdot \bar{\bE}\, dx - \int_{D}\theta_1 \cdot\bar{\bH}\,dx = 0 ~~ \text{ for all } (\bE, \bH) \in \mathcal N(D).\end{equation}
In particular,  system (\ref{maxwell}) has a unique solution $(E, H)\in\mathcal N(D)^{\perp}$ if and only if 
(\ref{condition}) holds.
\end{lemma}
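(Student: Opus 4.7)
For the necessary direction, let $(E,H)$ solve (\ref{maxwell}) and $(\bE,\bH)\in\mathcal N(D)$. I would test the two equations of (\ref{maxwell}) against $\bar{\bH}$ and $\bar{\bE}$ respectively, apply the integration-by-parts identity
\[
\int_D \nabla\times A\cdot B\,dx-\int_D A\cdot\nabla\times B\,dx = \int_{\partial D}(A\times B)\cdot\nu\,ds,
\]
and use $\nabla\times\bar{\bE}=-i\mu\bar{\bH}$, $\nabla\times\bar{\bH}=i\eps\bar{\bE}$ (from (\ref{def:resonance})) to cancel the interior terms $\int_D\mu H\cdot\bar{\bH}$ and $\int_D\eps E\cdot\bar{\bE}$. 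This yields
\[
\int_D(\theta_2\cdot\bar{\bE}-\theta_1\cdot\bar{\bH})\,dx=\int_{\partial D}\big[(H\times\bar{\bE})-(E\times\bar{\bH})\big]\cdot\nu\,ds.
\]
To kill the boundary integral I would exploit the identity $\nabla\times u\cdot\nu=\dive_{\partial D}(u\times\nu)$, so the conditions in (\ref{maxwell}) and (\ref{def:resonance}) make each of $E\times\nu, H\times\nu, \bE\times\nu, \bH\times\nu$ surface-divergence-free on $\partial D$. Under the standing hypothesis that $D$ and $\R^3\setminus D$ are simply connected, $\partial D$ is topologically a sphere, so each such tangential field has a representation $\nu\times\nabla_{\partial D}\phi$; surface integration by parts combined with $\dive_{\partial D}(\nu\times\nabla_{\partial D}f)=0$ then forces both boundary integrals to be zero.

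For sufficiency I would apply the Fredholm alternative to the operator
\[
A:D(A)\subset[L^2(D)]^6\to[L^2(D)]^6,\qquad A(E,H)=(\nabla\times E-i\mu H,\;\nabla\times H+i\eps E),
\]
with $D(A)=\{(E,H)\in[H(\curl,D)]^2: \nabla\times E\cdot\nu=\nabla\times H\cdot\nu=0 \text{ on }\partial D\}$, so that $\ker A=\mathcal N(D)$. Lemma~\ref{lem-compact-couple} supplies the key compactness: given $(E_n,H_n)\in D(A)$ bounded in $[L^2(D)]^6$ with $A(E_n,H_n)$ convergent in $[L^2(D)]^6$, the Maxwell relations automatically force $(E_n,H_n)$ bounded in $[H(\curl,D)]^2$ and the normal traces $\nabla\times E_n\cdot\nu,\nabla\times H_n\cdot\nu$ vanish identically, so a subsequence converges in $[L^2(D)]^6$. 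Applied to sequences inside $\mathcal N(D)$ this yields $\dim\mathcal N(D)<\infty$; applied by the usual normalize-and-extract contradiction to a sequence in the $L^2$-orthogonal complement of $\mathcal N(D)$ with $A(E_n,H_n)$ Cauchy, it yields closed range for $A$. Equipping $[L^2(D)]^6$ with the sesquilinear pairing
\[
\langle(\theta_1,\theta_2),(v_1,v_2)\rangle_{*}:=\int_D\theta_2\cdot\bar v_1\,dx-\int_D\theta_1\cdot\bar v_2\,dx,
\]
the boundary-term computation of Step~1 extended to arbitrary $(v_1,v_2)\in D(A)$ gives $\langle A(E,H),(v_1,v_2)\rangle_{*}=\langle(E,H),A(v_1,v_2)\rangle_{*}$, so $A$ is formally self-adjoint with respect to $\langle\cdot,\cdot\rangle_{*}$. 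By the Fredholm alternative, $\operatorname{Range}(A)=(\ker A)^{\perp_{*}}$, which is precisely condition (\ref{condition}). Uniqueness in $\mathcal N(D)^\perp$ then follows because the difference of two solutions of (\ref{maxwell}) lies in $\ker A=\mathcal N(D)$, and the $L^2$-orthogonal projection onto $\mathcal N(D)^\perp$ selects a unique representative.

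The main obstacle I anticipate is identifying the correct adjoint pairing: the non-standard duality $\langle\cdot,\cdot\rangle_{*}$ is what makes $A$ formally self-adjoint and what produces the exact sign structure $\int_D\theta_2\cdot\bar{\bE}-\int_D\theta_1\cdot\bar{\bH}$ of the compatibility condition, rather than the naive $\int_D\theta_1\cdot\bar v_1+\int_D\theta_2\cdot\bar v_2$ coming from the standard $L^2$ pairing. The surface integration by parts underpinning both Step~1 and the verification $A^{*}=A$ likewise requires care, as it relies on the topological simplicity of $\partial D$; fortunately this is guaranteed by the standing hypothesis that both $D$ and $\R^3\setminus D$ are simply connected.
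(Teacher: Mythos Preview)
Your argument is correct and reaches the same conclusion, but the route differs substantially from the paper's. The paper first uses the Helmholtz-type decomposition of Lemma~\ref{composition} to reduce to the case where $\theta_1,\theta_2$ are divergence-free with vanishing normal trace, then eliminates $H$ and works variationally in the constrained space
\[
\mathbb V=\{\varphi\in H(\curl,D):\dive(\eps\varphi)=0,\ \eps\varphi\cdot\nu=0,\ \nabla\times\varphi\cdot\nu=0\},
\]
recasting the problem as $(\mathrm{Id}+A)E=g$ with $A$ compact and self-adjoint for the \emph{standard} inner product on $\mathbb V$; Fredholm theory then applies in its textbook form. Your approach keeps the pair $(E,H)$, treats $A$ as an unbounded operator on $[L^2(D)]^6$, and extracts finite-dimensional kernel and closed range directly from Lemma~\ref{lem-compact-couple}. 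This avoids both the preliminary reduction on $(\theta_1,\theta_2)$ and the passage to the subspace $\mathbb V$, at the cost of handling an indefinite pairing rather than a genuine inner product.

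The boundary-integral vanishing that you obtain via the surface Hodge decomposition on the topological sphere $\partial D$ is exactly what the paper proves (see the key identity singled out in the Remark after the proof, $\int_D H\cdot\nabla\times\bar E=\int_D\nabla\times H\cdot\bar E$ when both curls have zero normal trace), only the paper derives it via Lemma~\ref{potential} in the bulk rather than Hodge theory on the boundary. The one point where you should be explicit is the step from ``$A$ formally self-adjoint for $\langle\cdot,\cdot\rangle_*$'' to ``$\operatorname{Range}(A)=(\ker A)^{\perp_*}$'': the closed-range theorem gives $\operatorname{Range}(A)=(\ker A^{*})^{\perp}$ for the \emph{Hilbert} adjoint, so you need to observe that $\langle x,y\rangle_*=\langle Jx,y\rangle_{L^2}$ with $J(\theta_1,\theta_2)=(\theta_2,-\theta_1)$ a unitary preserving $D(A)$, whence $A^{*}=JAJ^{-1}$ and $\ker A^{*}=J(\ker A)$, which unwinds to condition~(\ref{condition}). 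Once that translation is written out, your argument is complete.
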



\begin{proof} Lemma~\ref{lem:fredholm} is derived from  the Fredholm theory.  Since $\eps$ and $ \mu$ are uniformly elliptic, by Lemma \ref{composition}, there exist $p_1, p_2 \in H^1(D)$ and $\phi_1, \phi_2 \in [H^1(D)]^3$ such that
\begin{equation}\label{equ:de}\theta_1 = \mu\nabla p_1 + \nabla\times \phi_1,\quad \theta_2 = \eps \nabla p_2 + \nabla\times \phi_2 \mbox{ in }  D, 
\end{equation}
and \begin{equation}\label{equ:de:bou}\nabla\times \phi_1\cdot \nu =\nabla\times \phi_2\cdot \nu = 0 \text{ on } \partial D .\end{equation}
Set  $(E_0, H_0):= (-i\nabla p_2, i\nabla p_1 )$ in $D$. Then $(E_0, H_0)\in [H(\curl, D)]^2$ is a solution to
\begin{equation}
\begin{cases}\label{onesol}
\nabla\times E_0 = i\mu H_0 + \mu\nabla p_1 & \text{ in } D,\\[6pt]
\nabla\times H_0 = -i\eps E_0 + \eps\nabla p_2 & \text{ in } D,\\[6pt]
\nabla\times  E_0\cdot \nu = \nabla\times H_0\cdot \nu = 0 & \text{ on } \partial D. 
\end{cases}
\end{equation} 
We have 
\begin{equation}\label{condd}\int_{D}\eps \nabla p_2\cdot \bar{\bE}\,dx - \int_{D}\mu \nabla p_1\cdot \bar{\bH}\,dx = 0 ~~ \text{ for all } (\bE, \bH) \in \mathcal N(D).\end{equation} 
From \eqref{equ:de}, \eqref{equ:de:bou}, \eqref{onesol}, and \eqref{condd}, by considering $(E- E_0, H - H_0)$ instead of $(E, H)$, one might assume that $(\theta_1, \theta_2)\in H(\dive, D)$,  
\begin{equation}\label{assumption-01} 
\dive (\theta_1) = \dive (\theta_2 )  = 0   \mbox{ in } D \quad \mbox{ and } \quad   \theta_1\cdot \nu = \theta_2 \cdot \nu = 0 \mbox{ on } \partial D.
\end{equation}
This is assumed from now on. 

\medskip

Set
\[\mathbb{V} = \Big\{\varphi \in H(\curl, D): \dive (\eps \varphi) = 0, \ \eps \varphi\cdot \nu = 0 \text{ on } \partial D,\  \nabla\times \varphi\cdot \nu = 0 \text{ on } \partial D \Big\}.\]
Since $\eps$ and $\mu$ are real, symmetric and uniformly elliptic, $\VV$ is a Hilbert space equipped with the scalar product 
\begin{equation}\label{maxwell-S}
<E, \varphi>_{\VV, \VV} = \int_{D}\mu^{-1}\nabla\times E \cdot \nabla\times \bar{\varphi}  \, dx + \int_{D}\eps E \cdot \bar{\varphi} \,  dx \quad  \mbox{for $E, \varphi \in \VV$.}
\end{equation}
Let $A: \VV \to  \VV$ be defined by 
\begin{equation}\label{maxwell-AE}
<AE, \varphi>_{<\VV, \VV>} =  - 2 \int_{D}\eps E\cdot \bar{\varphi}\,dx \text{ for all } \varphi \in \VV.
\end{equation}
Since $\eps$ is symmetric,  one can easily check that $A$ is self-adjoint. Since $\eps$ and $\mu$ are symmetric and uniformly elliptic, by  Lemma~\ref{lem:compact},  $A$ is compact.

Let $g \in \VV$ be such that 
\begin{equation}\label{maxwell-g}
<g, \varphi>_{<\VV, \VV>} =  \int_{D}i\theta_2\cdot \bar{\varphi} + \int_{D}\mu^{-1}\theta_1\cdot\nabla\times \bar{\varphi} \text{ for all } \varphi \in \VV. 
\end{equation}
We claim that 
\begin{multline} \label{maxwell-Fredholm}
\mbox{system (\ref{maxwell}) has a solution in $[H(\curl, D)]^2$} \\[6pt] \mbox{ if and only if the equation } 
u + Au = g \mbox{ in } \VV \mbox{ has a solution in $\VV$} 
\end{multline}
and 
\begin{multline} \label{maxwell-Fredholm-1}
\mbox{ $(E, H)$ is a solution of \eqref{maxwell} if and only if} \\[6pt]
\mbox{  $E + AE = g$ in $\VV$ and $H = - i \mu^{-1} (\nabla \times E -  \theta_1) $.}
\end{multline}
Assuming this, we continue the proof. By \eqref{maxwell-Fredholm}  and the Fredholm theory, see, e.g., \cite[Chapter 6]{Brezis}, system (\ref{maxwell})  has a solution if and only if 
\begin{equation}\label{maxwell-F-1}
\langle g, \varphi \rangle_{\VV, \VV} = 0 \mbox{ for all } \varphi \in \VV \mbox{ such that } \varphi + A \varphi = 0 \mbox{ in } \VV, 
\end{equation} 
since $A$ is self-adjoint. Applying \eqref{maxwell-Fredholm-1} with $g = \theta_1 = \theta_2 = 0$ and using \eqref{maxwell-S}, \eqref{maxwell-AE}, and \eqref{maxwell-g}, we derive that  condition  \eqref{maxwell-F-1} is equivalent to the fact that 
\begin{equation*}
\int_{D}\theta_2\cdot \bar{\bE}\,dx - \int_{D}\theta_1 \cdot\bar{\bH}\,dx = 0 ~~ \text{ for all } (\bE, \bH) \in \mathcal N(D), 
\end{equation*}
which is \eqref{condition}.

\medskip
The rest of the proof is devoted to establishing Claims \eqref{maxwell-Fredholm} and \eqref{maxwell-Fredholm-1}. 
Let $(E, H)\in [H(\curl, D)]^2$ be a solution to (\ref{maxwell}). From \eqref{assumption-01}, we derive   that $E\in \VV$. Fix $\varphi \in \VV$. Then  $\nabla\times \varphi \cdot\nu = 0 \mbox{ on } \partial D$.  By Lemma~\ref{potential}, there exists $\varphi_0\in [H^1(D)]^3$ such that 
\begin{equation}\label{maxwell-varphi0}
\nabla\times \varphi_0 = \nabla\times \varphi \mbox{ in } D,\quad  \dive \varphi_0 = 0 \mbox{ in } D,\quad \mbox{ and } \quad 
\varphi_0\times \nu = 0 \mbox{ on } \partial D.
\end{equation}
Since $\nabla\times (\varphi_0-\varphi) = 0$ and $D$ is simply connected, there exists $\xi\in H^1(D)$ such that 
\begin{equation}\label{maxwell-xi}
\mbox{ $\varphi_0 - \varphi= \nabla\xi$ in $D$.}
\end{equation}
We have, for $\varphi \in \VV$,  
\begin{equation}\label{maxwell-1}
\int_{D}\mu^{-1}\nabla\times E \cdot \nabla\times \bar{\varphi} \, dx=   i \int_{D}H \cdot \nabla \times \bar{\varphi} + \mu^{-1}\theta_1\cdot\nabla\times \bar{\varphi}\,dx. 
\end{equation}
Using \eqref{maxwell-varphi0} and an integration by parts, we obtain  
\begin{equation}\label{maxwell-2}
 \int_{D}H \cdot \nabla \times \bar{\varphi} \, dx  =    \int_{D}H \cdot \nabla \times \bar{\varphi_0} \, d x =   \int_{D}\nabla \times H \cdot\bar{\varphi_0}\,dx. 
\end{equation}
Using \eqref{maxwell-xi} and the fact $\nabla \times H \cdot \nu = 0$ on $\partial D$, we also get, by an integration by parts, 
\begin{equation*}
 \int_{D}\nabla \times H \cdot\bar{\varphi_0}\,dx 
=  \int_{D}\nabla \times H \cdot\bar{\varphi}\,dx. 
\end{equation*}
This implies, by \eqref{maxwell-2}, 
\begin{equation}\label{maxwell-3}
 \int_{D}H \cdot \nabla \times \bar{\varphi} \, dx  =      \int_{D}\nabla \times H \cdot\bar{\varphi}\,dx. 
\end{equation}
A combination of \eqref{maxwell-1} and  \eqref{maxwell-3} yields 
\begin{equation}\label{maxwell-4}
\int_{D}\mu^{-1}\nabla\times E \cdot \nabla\times \bar{\varphi} \, dx=   i \int_{D} \nabla \times H  \cdot \bar{\varphi} + \mu^{-1}\theta_1\cdot\nabla\times \bar{\varphi}\,dx. 
\end{equation}
We derive from \eqref{maxwell} and \eqref{maxwell-4} that 
\begin{equation}\label{weak}
\int_{D}\mu^{-1}\nabla\times E \cdot \nabla\times \bar{\varphi} \, dx =  \int_{D} \eps E \cdot\bar{\varphi}\,dx +i\int_{D} \theta_2\cdot \bar{\varphi}\,dx +  \int_{D}\mu^{-1}\theta_1\cdot\nabla\times \bar{\varphi}\,dx.
\end{equation}
It follows  from  \eqref{maxwell-S}, \eqref{maxwell-AE}, and \eqref{maxwell-g} that 
$$
E + A E = g \mbox{ in } \VV. 
$$ 

\medskip
Conversely, assume that there exists $u\in \VV$ such that $u + A u  = g$. Set 
$$
E = u \mbox{ and }  H = - i \mu^{-1} (\nabla \times E - \theta_1) \mbox{ in } D. 
$$ 
Using  \eqref{weak}, one can check that $(E, H)$ satisfies the first two equations of \eqref{maxwell}. It is clear that $\nabla\times  E \cdot \nu = 0$ on $\partial D$ by the definition of $\VV$. Since $\nabla \times H = - i \eps E + \theta_2$ in $D$, $\eps E \cdot \nu = 0$ on $\partial D$ ($E \in \VV$), and  $\theta_2 \cdot \nu = 0 $ on $\partial D$ by \eqref{assumption-01}, we obtain 
$$
\nabla \times H \cdot \nu = 0 \mbox{ on } \partial D. 
$$
The proof is complete. 
\end{proof}

\begin{remark} \rm One of the  key points in the proof of Lemma~\ref{lem:fredholm} is the identity
$$
\int_{D} H  \cdot \nabla \times \bar E \, dx  = \int_{D} \nabla \times H \cdot  \bar E \, dx, 
$$
if $E, H \in H(\curl, D)$ is such that  $\nabla \times E \cdot \nu = \nabla \times H \cdot \nu = 0$ on $\partial D$, see \eqref{maxwell-3}. This ensures the variational character of 
system \eqref{maxwell}.
\end{remark}

The following lemma yields the uniqueness of $(E_0, H_0)$ in Definition~\ref{limit-2}. 
 \begin{lemma}
   \label{lem:uniqueness}
  Let $[(E, H) ,(\tilde{E}, \tilde{H})]\in [H_{\loc}(\curl, \R^3)]^2\times \mathcal N(D)^{\perp}$ be such that 
  \begin{equation}\label{equ:mix}
  \begin{cases}\nabla\times E = \nabla\times H = 0 & \text{ in } \R^3\setminus D,\\[6pt]
  \dive E = \dive H = 0 & \text{ in } \R^3\setminus D,\\[6pt]
  \nabla\times E = i\mu H &\text{ in } D,\\[6pt]
  \nabla\times H = -i\eps E &\text{ in } D,
  \end{cases} \quad  \mbox{ and }  \quad 
  \begin{cases}
  \nabla\times \tilde{E} = i\mu \tilde{H} &\text{ in } D,\\[6pt]
  \nabla\times \tilde{H} = -i\eps \tilde{E} &\text{ in } D,\\[6pt]
  \eps \tilde{E}\cdot \nu = E\cdot \nu|_{\ext}  &\text{ on } \partial D,\\[6pt]
  \mu \tilde{H}\cdot \nu = H\cdot \nu|_{\ext} &\text{ on } \partial D,
  \end{cases}\end{equation}
and
\begin{equation}\label{decaymix} \Big|\big(E(x), H(x)\big)\Big| = O(|x|^{-2})\mbox{ for large } |x|. 
\end{equation} 
Then $(E, H) = (0, 0) \mbox{ in } \R^3$ and $(\tilde{E}, \tilde{H}) = (0, 0) \mbox{ in } D$. 
\end{lemma}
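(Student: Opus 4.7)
Since $(\tilde E,\tilde H)$ satisfies the Maxwell system in $D$, we have $\nabla\times\tilde E\cdot\nu=i\omega\mu\tilde H\cdot\nu$ and $\nabla\times\tilde H\cdot\nu=-i\omega\eps\tilde E\cdot\nu$ on $\partial D$; combined with the coupling in \eqref{equ:mix}, this reduces the assertion $(\tilde E,\tilde H)\in\mathcal N(D)$ to proving $E\cdot\nu|_{\ext}=H\cdot\nu|_{\ext}=0$ on $\partial D$. Granted this, $(\tilde E,\tilde H)=(0,0)$ in $D$ follows from $\mathcal N(D)\cap\mathcal N(D)^{\perp}=\{0\}$. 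Then $E,H$ are curl- and divergence-free in $\R^3\setminus D$ with $O(|x|^{-2})$ decay and vanishing normal traces on $\partial D$, so Lemma~\ref{uniquenessstatic} applied to $E$ and to $H$ yields $(E,H)=(0,0)$ in $\R^3\setminus D$. The tangential continuity of $(E,H)\in[H_{\loc}(\curl,\R^3)]^2$ then forces $E\times\nu|_{\inte}=H\times\nu|_{\inte}=0$ on $\partial D$, so that extending $(E,H)$ by zero outside $D$ produces a compactly supported Maxwell solution in $\R^3$, which must vanish in $D$ by the unique continuation principle (valid under the piecewise $C^1$ regularity of $\eps,\mu$ imposed in the paper).

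\textbf{Energy identity.} To prove that the exterior normal traces vanish, I combine the gradient structure of $(E,H)$ in $\R^3\setminus D$ with the $\mathcal N(D)^{\perp}$-orthogonality of $(\tilde E,\tilde H)$. By Lemma~\ref{lem:potential}, there exist scalar potentials $\phi_E,\phi_H\in H^1_{\loc}(\R^3\setminus D)$ with $\nabla\phi_E=E$, $\nabla\phi_H=H$ and $|\phi_E(x)|+|\phi_H(x)|=O(|x|^{-1})$. Integrating by parts in $\R^3\setminus D$ against $\bar E=\nabla\bar\phi_E$ and $\bar H=\nabla\bar\phi_H$ respectively, and using $\dive E=\dive H=0$ together with the decay at infinity, one obtains
\[
\int_{\R^3\setminus D}|E|^2\,dx = -\int_{\partial D}\phi_E\,\overline{E\cdot\nu|_{\ext}}\,ds,\qquad \int_{\R^3\setminus D}|H|^2\,dx = -\int_{\partial D}\phi_H\,\overline{H\cdot\nu|_{\ext}}\,ds.
\]
The coupling in \eqref{equ:mix} and the reality of $\eps,\mu$ give $\overline{E\cdot\nu|_{\ext}}=\eps\bar{\tilde E}\cdot\nu$ and $\overline{H\cdot\nu|_{\ext}}=\mu\bar{\tilde H}\cdot\nu$ on $\partial D$. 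Choosing any $H^1(D)$ extensions $\phi_E^D,\phi_H^D$ of the boundary traces and using $\dive(\eps\bar{\tilde E})=\dive(\mu\bar{\tilde H})=0$ in $D$ (immediate from Maxwell for $(\tilde E,\tilde H)$), one translates the boundary integrals into interior ones to get
\[
\int_{\R^3\setminus D}(|E|^2+|H|^2)\,dx = -\int_D\bigl[\eps\nabla\phi_E^D\cdot\bar{\tilde E}+\mu\nabla\phi_H^D\cdot\bar{\tilde H}\bigr]\,dx.
\]

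\textbf{The main obstacle.} The delicate step is to show that the right-hand side equals $-\int_{\R^3\setminus D}(|E|^2+|H|^2)\,dx$, so that both sides vanish. My plan is to choose $\phi_E^D,\phi_H^D$ as the weighted harmonic extensions satisfying $\dive(\eps\nabla\phi_E^D)=\dive(\mu\nabla\phi_H^D)=0$ in $D$, and then to apply the Helmholtz decomposition of Lemma~\ref{composition} to $\tilde E$ and $\tilde H$ with weights $\eps$ and $\mu$ respectively. Combined with the Maxwell relations $\eps\bar{\tilde E}=-(i\omega)^{-1}\nabla\times\bar{\tilde H}$ and $\mu\bar{\tilde H}=(i\omega)^{-1}\nabla\times\bar{\tilde E}$ and the tangential-trace continuities $E\times\nu|_{\ext}=E\times\nu|_{\inte}$, $H\times\nu|_{\ext}=H\times\nu|_{\inte}$ at $\partial D$ (which tie $\phi_E,\phi_H$ on $\partial D$ to the interior Maxwell data), this allows one to re-express the right-hand side as an $L^2(D)$-pairing of $(\tilde E,\tilde H)$ against an element of $\mathcal N(D)$; the hypothesis $(\tilde E,\tilde H)\in\mathcal N(D)^{\perp}$ then makes this pairing vanish. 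Carrying out the bookkeeping of gradient versus solenoidal parts (and matching the boundary condition $\phi\times\nu=0$ of Lemma~\ref{composition} with the normal-zero condition in the definition of $\mathcal N(D)$) is the technical heart of the argument.
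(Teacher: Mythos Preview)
Your energy identity and overall reduction are correct and match the paper: write $E=\nabla\phi_E$, $H=\nabla\phi_H$ in $\R^3\setminus D$, integrate by parts, and use the coupling $E\cdot\nu|_{\ext}=\eps\tilde E\cdot\nu$, $H\cdot\nu|_{\ext}=\mu\tilde H\cdot\nu$ to land on the interior integral you display. The endgame (once $(E,H)=0$ outside, unique continuation inside, then $(\tilde E,\tilde H)\in\mathcal N(D)\cap\mathcal N(D)^{\perp}$) is also the paper's.

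The gap is in your ``main obstacle'' paragraph. You propose to show the right-hand side vanishes by recognizing it as an $L^2(D)$-pairing of $(\tilde E,\tilde H)$ against an element of $\mathcal N(D)$ and invoking orthogonality. This does not work as stated: if you carry the computation through with the Maxwell relations you mention, the volume terms that arise are \emph{weighted} pairings $\int_D\eps E\cdot\bar{\tilde E}$ and $\int_D\mu H\cdot\bar{\tilde H}$, not the plain $L^2$ inner products defining $\mathcal N(D)^{\perp}$, so the orthogonality hypothesis is inapplicable. In fact the hypothesis $(\tilde E,\tilde H)\in\mathcal N(D)^{\perp}$ plays \emph{no role} in proving $(E,H)=0$; it is used only at the very last step, exactly as in your first paragraph.

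What actually closes the identity---and what the paper does---is a direct algebraic cancellation. After substituting $\eps\bar{\tilde E}$ and $\mu\bar{\tilde H}$ by curls and integrating by parts (using $\nabla\times\nabla\phi=0$ and the tangential-trace matching you note), the two contributions become the boundary integrals $-i\int_{\partial D}\tilde H\cdot(\bar E\times\nu)$ and $i\int_{\partial D}\tilde E\cdot(\bar H\times\nu)$. One further integration by parts \emph{back into} $D$, this time using the Maxwell equations for the interior pair $(E,H)$ as well as for $(\tilde E,\tilde H)$, produces four terms that cancel pairwise by the symmetry of $\eps$ and $\mu$. No Helmholtz decomposition, no weighted harmonic extension, and no $\mathcal N(D)^{\perp}$ orthogonality is needed at this stage; the Maxwell structure of both pairs closes the identity on its own. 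Your plan omits this second integration by parts against $(E,H)|_D$, and the machinery you invoke does not substitute for it.
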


\begin{proof}
Applying Lemma~\ref{lem:potential} to $\bar{E}$, there exists a function $\theta \in H^1_{\loc}(\R^3\setminus D)$ such that 
 \begin{equation}\label{decaytheta}
 \mbox{ $\nabla \theta = \bar{E}$ in $\R^3 \setminus D$ and } |\theta(x)| = O(|x|^{-1})\mbox{ for large } |x|.
 \end{equation}
 For $R>0$ large, since $\dive E = 0 \mbox{ in } \R^3\setminus D$, we have
 \begin{equation*}
 \int_{B_R\setminus D}|E|^2dx   =  \int_{B_R\setminus D}E\cdot\nabla \theta dx = \int_{\partial B_R}(E\cdot \nu) \theta ds - \int_{\partial D}(E\cdot \nu)  \big|_{\ext} \theta ds. 
 \end{equation*}
Letting $R$ tend to $+ \infty$ and using \eqref{decaymix} and \eqref{decaytheta}, we obtain
 \begin{equation}\label{lem-uniqueness-p1}
 \int_{\R^3\setminus D}|E|^2\, dx = -\int_{\partial  D}(E\cdot \nu) \big|_{\ext} \theta \, ds.
 \end{equation}
Extend $\theta$ in $D$ so that the extension belongs to $H^1_{\loc}(\mR^3)$ and  still denote this extension by $\theta$. 
 We derive from the system of $(\tilde E, \tilde H)$ in  \eqref{equ:mix} that 
 \begin{align} \label{lem-uniqueness-p2}
 -\int_{\partial  D}(E\cdot \nu)|_{\ext} \theta \, ds = &  -\int_{\partial  D}(\eps \tilde{E}\cdot \nu) \theta \, ds =   -\int_{D}\eps \tilde{E}\cdot\nabla \theta \, dx - \int_D \dive (\eps \tilde E) \theta  \, dx \nonumber \\[6pt]
 = & \int_{D}-i\nabla\times \tilde{H}\cdot\nabla \theta \, dx = -i\int_{\partial D}\tilde{H}\cdot(\nabla  \theta \times \nu)\, ds  = -i\int_{\partial D}\tilde{H}\cdot(\bar{E} \times \nu)\, ds.  \end{align}
Combining \eqref{lem-uniqueness-p1} and \eqref{lem-uniqueness-p2} yields 
 \begin{equation}\label{lem-uniqueness-p3}
 \int_{\R^3\setminus D}|E|^2\, dx =  -i\int_{\partial D}\tilde{H}\cdot(\bar{E} \times \nu)\, ds. 
 \end{equation}
Similarly, we have 
 \begin{equation}\label{lem-uniqueness-p4}
 \int_{\R^3\setminus D}|H|^2\, dx =  i\int_{\partial D}\tilde{E}\cdot(\bar{H} \times \nu)\, ds. 
 \end{equation}
An integration by parts implies 
\begin{multline*}
\int_{\partial D}\tilde{H}\cdot(\bar{E} \times \nu) \, ds - \int_{\partial D}\tilde{E}\cdot(\bar{H} \times \nu)\, ds \\[6pt]
= \int_{D} \nabla \times  \tilde H \cdot \bar E \, dx - \int_{D} \nabla  \times \bar E \cdot \tilde H \, dx - \int_{D} \nabla \times  \tilde E \cdot \bar H \, dx + \int_{D} \nabla \times \bar H \cdot \tilde E \, dx.
\end{multline*}
Using the equations of $(E, H)$ and $(\tilde E, \tilde H)$ in $D$ in \eqref{equ:mix}, we obtain 
\begin{equation}\label{lem-uniqueness-p5}
\int_{\partial D}\tilde{H}\cdot(\bar{E} \times \nu)\, ds - \int_{\partial D}\tilde{E}\cdot(\bar{H} \times \nu)\, ds =0. 
\end{equation}
A combination of \eqref{lem-uniqueness-p3},  \eqref{lem-uniqueness-p4},  and  \eqref{lem-uniqueness-p5} yields 
\[\int_{\R^3\setminus D}\big(|E|^2+ |H|^2\big)dx = 0.\] 
We derive that  $E = H = 0$ in $\R^3\setminus D$.  This implies, by the unique continuation principle see, e.g., \cite[Theorem 1]{Nguyen}, 
$$ 
E = H = 0 \mbox{ in } D 
$$ 
and,  since $(\tilde E, \tilde H) \in {\mathcal N(D)}^\perp$,  
$$
 \tilde{E} = \tilde{H} = 0  \mbox{ in } D.
$$ 
The proof is complete.
 \end{proof}

\subsection{Approximate cloaking in  the non-resonant case - Proof of Theorem \ref{thm1}}

The key ingredient in the proof of Theorem~\ref{thm1} is the following lemma whose proof uses various results in Section~\ref{sec-pre} and Section~\ref{sect-lemma}

 \begin{lemma}
 	\label{lem:jump1} Let $0 < \rho < \rho_0$, $\theta_\rho = (\theta_{1, \rho}, \theta_{2, \rho}) \in [L^2(D)]^6$, and $h_\rho = (h_{1, \rho}, h_{2, \rho}) \in [H^{-1/2}(\dive_{\Gamma}, \partial D)]^2$. Let $(E_{\rho},H_{\rho})\in [\bigcap_{R > 1}H(\curl, B_R \setminus \partial D)]^2$ be the unique  radiating solution to the system
 	\[
 	\begin{cases}
 	\nabla\times E_{\rho} = i\rho H_{\rho} & \text{ in } \R^3\setminus D,\\[6pt]
 	\nabla\times H_{\rho} = -i\rho E_{\rho} & \text{ in } \R^3\setminus D,\\[6pt]
 	\nabla\times E_{\rho} = i\mu H_{\rho} + \theta_{1, \rho} & \text{ in } D,\\[6pt]
 	\nabla\times H_{\rho} = -i\eps E_{\rho} + \theta_{2, \rho} & \text{ in } D,\\[6pt]
 	[E_{\rho}\times \nu] = h_{1, \rho},  [H_{\rho}\times \nu] = h_{2, \rho} & \text{ on } \partial D. 
 	\end{cases}
 	\]
Assume that $\mathcal N(D) = \{(0, 0)\}$. We have
 	\begin{equation}\label{jump-eq}\|(E_{\rho}, H_{\rho})\|_{L^2(B_5)}\leq C\Big(\| \theta_\rho \|_{L^2(D)} + \|h_\rho \|_{H^{-1/2}(\dive_{\Gamma}, \partial D)}\Big),\end{equation}
 	for some positive constant $C$ depending only on $\rho_0, \,  \eps,  \, \mu$. Assume in addition that 
 	\[ \lim_{\rho \to 0 } \| h_\rho \|_{H^{-1/2}(\dive_{\Gamma}, \partial D)}  = 0 \quad \mbox{ and }  \quad 
  \lim_{\rho \to 0} \theta_\rho =  \theta \mbox{ in } [L^2(D)]^6, \]
for some $\theta = (\theta_1, \theta_2) \in [L^2(D)]^6$. 
 	We have 
 	\begin{equation}\label{jump-convergence}
 	\lim_{\rho \to 0 }(E_{\rho}, H_{\rho}) =  Cl(\theta_1, \theta_2) \mbox{ in } [H(\curl, D)]^2.
 	\end{equation}
 \end{lemma}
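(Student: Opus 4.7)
My plan is to prove \eqref{jump-eq} by a contradiction argument in the spirit of Lemma \ref{lem:ex-12}. Suppose the estimate fails; then there exist sequences $\rho_n \in (0, \rho_0)$ with $\rho_n \to \rho_\infty \in [0, \rho_0]$, data $\theta_n = (\theta_{1,n}, \theta_{2,n}) \to 0$ in $[L^2(D)]^6$ and $h_n \to 0$ in $[H^{-1/2}(\dive_\Gamma, \partial D)]^2$, together with corresponding radiating solutions $(E_n, H_n)$ normalized by $\|(E_n, H_n)\|_{L^2(B_5)} = 1$. I will treat the delicate case $\rho_\infty = 0$; the case $\rho_\infty > 0$ is similar. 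From the PDE, the curls of $E_n$ and $H_n$ are controlled in $L^2$ on each side of $\partial D$, so $(E_n, H_n)$ is bounded in $H(\curl, D)$ and $H(\curl, B_5 \setminus \bar D)$, while Lemma \ref{lem-SC-1} supplies uniform decay for $|x| > 3$.

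The first technical step is to upgrade weak convergence to strong $L^2$ convergence on $B_5$. In $D$, the equations yield $\dive(\eps E_n) = i\,\dive \theta_{2,n}$ and $\dive(\mu H_n) = -i\,\dive \theta_{1,n}$, which converge in $H^{-1}(D)$, while in the exterior $\dive E_n = \dive H_n = 0$. The jumps $[E_n \times \nu] = h_{1,n} \to 0$ and $[H_n \times \nu] = h_{2,n} \to 0$ in $H^{-1/2}(\partial D)$ link the tangential traces on the two sides. Applying Lemma \ref{lem:compact} (or alternatively Lemma \ref{lem-compact-couple}) separately inside and outside $D$, after passing to a subsequence I obtain $(E_n, H_n) \to (E, H)$ strongly in $[L^2(B_5)]^6$.

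The limit $(E, H)$ is a distributional solution satisfying: in $D$, $\nabla \times E = i\mu H$ and $\nabla \times H = -i\eps E$; in $\mathbb{R}^3 \setminus D$, $\nabla \times E = \nabla \times H = 0$ and $\dive E = \dive H = 0$; and $E \times \nu$, $H \times \nu$ are continuous across $\partial D$. Using $\nabla \times E \cdot \nu = \dive_\Gamma(E \times \nu)$, tangential continuity forces $\nabla \times E \cdot \nu|_{\mathrm{int}} = \nabla \times H \cdot \nu|_{\mathrm{int}} = 0$ on $\partial D$, so $(E|_D, H|_D) \in \mathcal N(D)$; the hypothesis $\mathcal N(D) = \{(0,0)\}$ then yields $E = H = 0$ in $D$. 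Consequently $E \times \nu|_{\mathrm{ext}} = H \times \nu|_{\mathrm{ext}} = 0$ on $\partial D$. Since the exterior equations give $\int_{\Gamma_i} E_n \cdot \nu = \int_{\Gamma_i} H_n \cdot \nu = 0$ on each component $\Gamma_i$ of $\partial D$ by Stokes, the same holds for the limit, and the decay $|(E, H)(x)| = O(|x|^{-2})$ follows from Lemma \ref{lem-SC-1}. Lemma \ref{uniquenessstaticE} applied to both $E$ and $H$ then forces $(E, H) = 0$ on $\mathbb{R}^3 \setminus D$, contradicting the normalization and establishing \eqref{jump-eq}.

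For \eqref{jump-convergence}, the estimate \eqref{jump-eq} bounds $(E_\rho, H_\rho)$ uniformly, and the same compactness machinery produces, along any subsequence, a strong $L^2(B_5)$-limit $(E_*, H_*)$. The exterior limit again solves a curl-free, divergence-free, $|x|^{-2}$-decaying system with vanishing flux on each component of $\partial D$, so Lemmas \ref{uniquenessstatic}--\ref{uniquenessstaticE} kill it; in $D$, $(E_*, H_*)$ satisfies the inhomogeneous Maxwell system with source $(\theta_1, \theta_2)$, and the vanishing exterior forces $\nabla \times E_* \cdot \nu = \nabla \times H_* \cdot \nu = 0$ on $\partial D$. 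Hence $(E_*, H_*) = Cl(\theta_1, \theta_2)$ by Lemma \ref{lem:fredholm}, and uniqueness of the limit promotes subsequential convergence to convergence of the full family. Strong $H(\curl, D)$ convergence follows since $\nabla \times E_\rho = i\mu H_\rho + \theta_{1,\rho}$ and $\nabla \times H_\rho = -i\eps E_\rho + \theta_{2,\rho}$ converge in $L^2(D)$. I expect the main obstacle to be the compactness step across $\partial D$: even though the jumps $h_n$ vanish in the limit, they are genuine $H^{-1/2}(\dive_\Gamma)$ jumps for each $n$, and tracking the tangential and normal traces on both sides of $\partial D$ simultaneously is what links the interior application of Lemma \ref{lem:fredholm} to the exterior application of the electrostatic uniqueness results.
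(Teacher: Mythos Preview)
Your overall strategy matches the paper's: contradiction, compactness, identify the limit, invoke $\mathcal N(D)=\{0\}$ inside and electrostatic uniqueness outside. Two points deserve sharpening.

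First, the compactness step is not parallel but sequential. You cannot apply Lemma~\ref{lem:compact} i) inside $D$ directly, since you have no a priori convergence of $E_n\times\nu|_{\inte}$; and part ii) needs $\dive(\eps E_n)\in L^2$, which fails for general $\theta_{2,n}$. The paper first reduces to $h_\rho=0$ via the trace extension of Lemma~\ref{lem:trace}, then observes that $\nabla\times E_n\cdot\nu|_{\inte}=\nabla\times E_n\cdot\nu|_{\ext}=i\rho_n H_n\cdot\nu|_{\ext}\to 0$ in $H^{-1/2}(\partial D)$ (and likewise for $H_n$), which is exactly the hypothesis of Lemma~\ref{lem-compact-couple}. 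That lemma then gives strong $H(\curl,D)$ convergence, hence convergence of the interior tangential traces in $H^{-1/2}(\dive_\Gamma,\partial D)$, and only then can Lemma~\ref{lem:compact} i) be invoked on $B_R\setminus D$. Your phrase ``separately inside and outside'' hides this dependence.

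Second, in the proof of \eqref{jump-convergence} you assert that the exterior limit vanishes by Lemmas~\ref{uniquenessstatic}--\ref{uniquenessstaticE}. This is false in general: once $(E_*,H_*)|_D=Cl(\theta_1,\theta_2)\neq 0$, the tangential traces $E_*\times\nu|_{\ext}$, $H_*\times\nu|_{\ext}$ are typically nonzero, so those uniqueness lemmas do not apply. Fortunately you do not need the exterior to vanish: the boundary conditions $\nabla\times E_*\cdot\nu|_{\inte}=\nabla\times H_*\cdot\nu|_{\inte}=0$ follow immediately from tangential continuity and the fact that $\nabla\times E_*=\nabla\times H_*=0$ in the exterior, and uniqueness of $Cl(\theta_1,\theta_2)$ (Lemma~\ref{lem:fredholm} with $\mathcal N(D)=\{0\}$) then pins down the interior limit. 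The paper argues exactly this way and never claims the exterior limit is zero.
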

 
 Here and in what follows on $\partial D$, $[u]$ denotes the jump of $u$ across $\partial D$ for an appropriate (vectorial) function $u$, i.e., $[u] = u|_{\ext} - u|_{\inte}$ on $\partial D$.

 \begin{proof}  By Lemma~\ref{lem:trace},  without loss of generality, one might assume that $h_{1, \rho} = h_{2, \rho} = 0$ on $\partial D$. This is assumed from now on.  
 
 \medskip 
 We first prove \eqref{jump-eq}  by contradiction. Assume that there exist  sequences $(\rho_n)_{n} \subset (0, \rho_0)$, $\big((E_n, H_n)\big)_{n}\subset [H_{\loc}(\curl, \R^3)]^2$, $\big((\theta_{1, n}, \theta_{2, n})\big)_{n}\subset [L^2(D)]^6$ such that 
 	\begin{equation}\label{equ:contra:main}
 	\begin{cases}
 	\nabla\times E_{n} = i\rho_{n} H_n & \text{ in } \R^3\setminus D,\\[6pt]
 	\nabla\times H_{n} = -i\rho_{n} E_n & \text{ in } \R^3\setminus D,\\[6pt]
 	\nabla\times E_{n} = i\mu H_n + \theta_{1, n} & \text{ in } D,\\[6pt]
 	\nabla\times H_{n} = -i\eps E_n + \theta_{2, n} & \text{ in } D,
 	\end{cases}
 	\end{equation}
 	\begin{equation}\label{unit-jump}   \|(E_n, H_n)\|_{L^2(B_5)} = 1 \text{ for all } n\in \mathbb{N}, \end{equation}
 	and
 	\begin{equation}\label{dive-jump}  \lim_{n \to + \infty }\|(\theta_{1, n}, \theta_{2, n})\|_{L^2(D)}= 0.
	\end{equation}
%
 	Without loss of generality, one might assume that $\rho_n \rightarrow \rho_{\infty}\in [0, \rho_0]$. We only consider the case $\rho_{\infty} = 0$. The case $\rho_{\infty} > 0$ can be proved similarly.  
 	
 	We have 
 	\begin{equation}\label{lem:jump-p1}
 	\nabla \times E_n  \cdot \nu \big|_{\inte}=  \nabla  \times  E_n \cdot \nu \big|_{\ext} = i \rho_n H_n \cdot \nu \big|_{\ext} \to 0 \mbox{ in } H^{-1/2}(\partial D) \mbox{ as } n \to \infty. 
 	\end{equation}
 	Similarly, we obtain 
 	\begin{equation}\label{lem:jump-p2}
 	\nabla \times H_n  \cdot \nu \big|_{\inte} \to 0 \mbox{ in } H^{-1/2}(\partial D) \mbox{ as } n \to \infty. 
 	\end{equation}
 	Applying Lemma~\ref{lem-compact-couple} to $\big((E_n, H_n) \big)_n$ in $D$, without loss of generality, one might assume that 
 	\begin{equation}\label{lem:jump-strong-convergence}
 	\big( (E_n, H_n) \big)_n \mbox{ converges in } [H(\curl, D)]^2 \mbox{ as } n \to \infty. 
 	\end{equation}
 	Applying i) of  Lemma \ref{lem:trace},  we derive that 
 	$$
 	\big( (E_n\times \nu , H_n\times \nu) \big)_n \mbox{ converges in } [H^{-1/2}(\dive_\Gamma, \partial D)]^2 \mbox{ as } n \to \infty. 
 	$$
 	It follows from \eqref{unit-jump}, Lemma~ \ref{lem-SC-1}, and i) of Lemma \ref{lem:compact} that  
 	\begin{equation}\label{lem:jump-strong-convergence-1}
 	\big((E_n, H_n) \big)_n \mbox{ converges in $[L^2_{\loc}(\mR^3 \setminus D)]^6$} \mbox{ as } n \to \infty. 
 	\end{equation}
	
 	Let $(E, H)$ be the limit of $(E_n, H_n)$ in $[L^2_{\loc}(\mR^3)]^6$. Then $(E, H)\in [H_{\loc}(\curl, \R^3)]^2$ and \footnote{In the case $\rho_{\infty} > 0$, the limit $(E, H)$ satisfies the radiating condition and is a solution to Maxwell equations in $\R^3$ with vanished data. It follows that $(E, H) = (0, 0)$, which also gives a  contradiction.}
 	\begin{equation}\label{equ:contra:limit}
 	\begin{cases}
 	\nabla\times E = \nabla\times H = 0  & \text{ in } \R^3\setminus D,\\[6pt]
 	\dive E = \dive H = 0 & \text{ in }  \R^3\setminus D,\\[6pt]
 	\nabla\times E = i\mu H  & \text{ in } D,\\[6pt]
 	\nabla\times H = -i\eps E  & \text{ in } D.
 	\end{cases}
 	\end{equation}
 	We derive from \eqref{lem:jump-p1} and \eqref{lem:jump-p2} that 
 	\begin{equation}\label{normal-jump-3}
 	\nabla\times E\cdot \nu|_{\inte}  = \nabla\times H\cdot \nu|_{\inte}  = 0 \mbox{ on } \partial D.
 	\end{equation}
 	Applying Lemma \ref{lem-SC-1}, we have  
 	\begin{equation}\label{decay-jump}
 	|\big(E(x), H(x)\big)| \leq \frac{C}{|x|^2} \mbox{ for } |x| > 3,
 	\end{equation} 
 	for some positive constant $C$.  Combining \eqref{equ:contra:limit} and \eqref{normal-jump-3} yields  that $(E, H) \big|_{D} \in \mathcal N(D)$. Since $\mathcal N(D) = \{(0, 0)\}$, it follows that $E = H = 0$ in $D$. Hence 
 	\begin{equation}\label{normal-jump-4}
 	E \times  \nu  =  H \times  \nu  = 0 \mbox{ on } \partial D.
 	\end{equation}
 	We have, for each connected component $\Gamma$ of $\partial D$,  
 	$$
 	\int_{\Gamma} E \cdot \nu|_{\ext} = \lim_{n \to \infty}\int_{\Gamma} E_n|_{\ext} \cdot \nu = \lim_{n \to \infty} \frac{1}{- i \rho_n }\int_{\Gamma} (\nabla \times H_n)  \cdot \nu|_{\ext} =0 
 	$$
 	and similarly 
 	$$
 	\int_{\Gamma} H \cdot \nu|_{\ext}  = 0.  
 	$$
 	Using  \eqref{equ:contra:limit}, \eqref{decay-jump}, and \eqref{normal-jump-4},  and applying Lemma 
 	\ref{uniquenessstaticE} to $(E, H)$ in $\mR^3 \setminus D$, we obtain 
 	$$
 	E = H = 0 \mbox{ in } \mR^3 \setminus D. 
 	$$
 	Thus $E = H = 0$ in $\R^3$, which, by using \eqref{lem:jump-strong-convergence} and \eqref{lem:jump-strong-convergence-1}, contradicts  \eqref{unit-jump}. Therefore, \eqref{jump-eq} is proved. 
	
	\medskip 
 	
 	We next establish \eqref{jump-convergence}. Fix an arbitrary sequence $(\rho_n)_n$ converging to $0$. From \eqref{jump-eq}, one obtains that 
 	\[\|\big(E_{\rho_n}, H_{\rho_n}\big)\|_{L^2(B_5)}\leq C\Big(\|\theta_{\rho_n}\|_{L^2(D)}+\|h_{\rho_n}\|_{H^{-1/2}(\dive_{\tau}, \partial D)}\Big) \leq C.\]
 	Using the same argument as above, one obtains that, up to a subsequence, $(E_{\rho_n}, H_{\rho_n})$
 	converges in  $[H(\curl, \mR^3)]^2$ to $(E, H)$, the unique solution of 
 	\begin{equation}\label{equ:res-contra:limit1}
 	\begin{cases}
 	\nabla\times E = \nabla\times H = 0  & \text{ in } \R^3\setminus D,\\[6pt]
 	\dive E = \dive H = 0 & \text{ in }  \R^3\setminus D,\\[6pt]
 	\nabla\times E = i\mu H  + \theta_1 & \text{ in } D,\\[6pt]
 	\nabla\times H = -i\eps E + \theta_2 & \text{ in } D.
 	\end{cases}
 	\end{equation}
 	This system implies $\nabla\times E\cdot \nu|_{\inte} = \nabla\times H \cdot \nu|_{\inte} = 0$ on $\partial D$. Since $\mathcal N(D) = \{(0, 0)\}$, we have  $(E, H)|_{D} = Cl(\theta_1, \theta_2)$. Since $(\rho_n) \to 0$ arbitrarily, assertion \eqref{jump-convergence} follows. The proof is complete.
 \end{proof}

We are ready to give the 

\medskip
\noindent{\bf Proof of Theorem~\ref{thm1}.} Let  $(E_{1, \rho}, H_{1, \rho}) \in [H_{\loc}(\curl, \R^3 \setminus B_{\rho})]^2$ be the unique    radiating solution to the system
\begin{equation}
\label{equ:aux2}
\begin{cases}
\nabla \times E_{1, \rho} = i\omega H_{1, \rho}  &\text{ in } \mathbb{R}^3\setminus B_{\rho},\\[6pt]
\nabla \times H_{1, \rho} = -i\omega E_{1, \rho} + J_{\ext} & \text{ in } \mathbb{R}^3\setminus B_{\rho},\\[6pt]
E_{1, \rho} \times \nu= 0 &\text{ on } \partial B_{\rho}, 
\end{cases}\end{equation}
extend $(E_{1, \rho}, H_{1, \rho})$ by $(0, 0)$ in $B_{\rho}$, and still denote this extension by $(E_{1, \rho}, H_{1, \rho})$. Define
\[
(E_{2, \rho}, H_{2, \rho}) := (E, H) - (E_{1, \rho}, H_{1, \rho})\quad \mbox{ and } \quad  (E_{3, \rho}, H_{3, \rho}): = (\pbE_{\rho}, \pbH_{\rho}) - (E_{1, \rho}, H_{1, \rho})  \quad \mbox{ in } \mR^3.
\]
Then  $(E_{2, \rho}, H_{2, \rho})\in [H_{\loc}(\curl, \R^3 \setminus B_{\rho})]^2$ is the unique  radiating solution to the system
\begin{equation*}
\begin{cases}
\nabla \times E_{2, \rho} = i\omega H_{2, \rho}  &\text{ in } \mathbb{R}^3\setminus B_{\rho},\\[6pt]
\nabla \times H_{2, \rho} = -i\omega E_{2, \rho} & \text{ in } \mathbb{R}^3\setminus B_{\rho},\\[6pt]
E_{2, \rho} \times \nu= E\times \nu &\text{ on } \partial B_{\rho},
\end{cases}
\end{equation*}
and $(E_{3, \rho}, H_{3, \rho})\in [\bigcap_{R> 1}H(\curl, B_R \setminus \partial B_{\rho})]^2$ is the unique  radiating solution to the system 
\begin{equation}\label{continue3-???}
\begin{cases}
\nabla \times E_{3, \rho} = i\omega \mu_\rho H_{3, \rho}  &\text{ in } \mathbb{R}^3\setminus \partial B_\rho,\\[6pt]
\nabla \times H_{3, \rho} = -i\omega \eps_\rho E_{3, \rho} + J_{\rho} \chi_{B_{\rho}} & \text{ in } \mathbb{R}^3\setminus \partial B_\rho,\\[6pt]
[E_{3, \rho} \times \nu] = 0,  \, [H_{3, \rho} \times \nu] = -H_{1, \rho}\times \nu |_{\ext}  &\text{ on } \partial B_\rho, 
\end{cases}
\end{equation}
where $\chi_D$ denotes the characteristic function of a subset $D$ of $\R^3$.  
Recall that $J_\rho$ is defined in  \eqref{Jrho}.  Set 
\[
\tilde E_{2, \rho}(x) = E_{\rho}(\rho x) \mbox{ and } \tilde H_{2, \rho}(x) = H_{\rho}(\rho x) \mbox{ for }x \in \R^3\setminus B_1.
\]
Then $(\tilde E_{2, \rho}, \tilde H_{2, \rho}) \in [H(\curl, \R^3\setminus B_1)]^2$ is the 
unique radiating solution to the system
\begin{equation}
\begin{cases}
\nabla \times \tilde E_{2, \rho} = i\omega\rho \tilde H_{2, \rho}  &\text{ in } \mathbb{R}^3\setminus B_{1},\\[6pt]
\nabla \times \tilde H_{2, \rho} = -i\omega\rho \tilde E_{2, \rho} & \text{ in } \mathbb{R}^3\setminus B_1,\\[6pt]
\tilde E_{2, \rho} \times \nu= E(\rho \, \cdot )\times \nu &\text{ on } \partial B_1.
\end{cases}
\end{equation}
By Lemmas \ref{lem:ex-12} and \ref{lem-FF} (also Remark~\ref{rem-div}), we have, for $R > 1/2$ and for $x\in B_{3R}\setminus B_{2R}$, 
\begin{align*}
\left|\Big(\tilde E_{2, \rho}\Big(\frac{x}{\rho}\Big), \tilde H_{2, \rho}\Big(\frac{x}{\rho}\Big)\Big)\right|&\leq C\rho^3\|(\tilde E_{2, \rho} , \tilde H_{2, \rho})\|_{L^2(B_2\setminus B_1)}\\[6pt]
& \leq C\rho^3(\|E(\rho .)\times \nu\|_{H^{-1/2}(\partial B_1)} + \rho^{-1}\|\dive_{\partial B_1} (E(\rho .)\times\nu)\|_{H^{-1/2}(\partial B_1)})\\[6pt]
& \leq C\rho^3(\|E(\rho .)\times\nu\|_{H^{-1/2}(\partial B_1)} +\|H(\rho .)\cdot\nu\|_{H^{-1/2}(\partial B_1)}). 
\end{align*}
Here and in what follows in this proof, $C$ denotes a positive constant depending only on  $\rho_0$, $R_0$, and $R$.  It follows from the definition of $(\tilde E_{2, \rho}, \tilde H_{2, \rho})$ that
\begin{equation}
\label{inequ1}
\|(E_{2, \rho}, H_{2, \rho})\|_{L^2(B_{3R}\setminus B_{2R})}\leq C\rho^3\|J_{\ext}\|_{L^2(\R^3\setminus B_2)}.
\end{equation}

From now on in this proof, for any vector field $v$, we denote \footnote{With this notation, one has $(E_c, H_c)(x) = (\hat \pbE_\rho, \hat \pbH_\rho)$ in $B_1$. It is worth noting that $\hat{v}(\cdot) \neq v(\rho \,  \cdot)$.}
\begin{equation}\label{hatvv}
\hat{v}(\cdot):= \rho v(\rho \,  \cdot). 
\end{equation}
We claim that
\begin{equation}\label{equ:es-1}
  \|\hat{H}_{1, \rho}\times \nu|_{\ext} \|_{H^{-1/2}(\partial B_1)} + \|\hat{E}_{1, \rho}\cdot \nu|_{\ext} \|_{H^{-1/2}(\partial B_1)}  \le C \rho \| J_{\ext} \|_{L^2(\R^3\setminus B_2)}
 \end{equation}
and, for $R > 1/2$, 
\begin{equation}
\label{inequ2}
\|(E_{3, \rho}, H_{3, \rho})\|_{L^2(B_{3R}\setminus B_{2R})}\leq C\big(\rho^3\|J_{\ext}\|_{L^2(\R^3\setminus B_2)} + \rho^2 \|J_{\inte}\|_{L^2(B_1)} \big).
\end{equation}
It is clear that  \eqref{thm1-est1}  follows from  (\ref{inequ1}) and  (\ref{inequ2}). Moreover, 
by Lemma~\ref{lem:jump1}, assertion  \eqref{thm1.1-CV} now follows from \eqref{equ:es-1}  and the fact that $(E_c, H_c) = (\hat E_{3, \rho}, \hat H_{3, \rho})$ in $B_1$. 

\medskip

It remains to establish  \eqref{equ:es-1} and (\ref{inequ2}).
It is clear that  $(\hat{E}_{3, \rho}, \hat{H}_{3, \rho})\in [\bigcap_{R> 0} H(\curl, B_R\setminus \partial B_1)]^2$ is the unique radiating solution to the system
\begin{equation}\label{continue3}
\begin{cases}
\nabla \times \hat{E}_{3, \rho} = i\omega\rho \hat{H}_{3, \rho}  &\text{ in } \mathbb{R}^3\setminus B_1,\\[6pt]
\nabla \times \hat{H}_{3, \rho} = -i\omega\rho \hat{E}_{3, \rho} & \text{ in } \mathbb{R}^3\setminus B_1,\\[6pt]
\nabla \times \hat{E}_{3, \rho} = i\omega\mu \hat{H}_{3, \rho}  &\text{ in }  B_1,\\[6pt]
\nabla \times \hat{H}_{3, \rho} = -i\omega\eps \hat{E}_{3, \rho}  + J_{\inte} & \text{ in } B_1,\\[6pt]
[\hat{E}_{3, \rho} \times \nu] =0, [\hat{H}_{3, \rho} \times \nu] = -\hat{H}_{1, \rho}\times \nu |_{\ext}  &\text{ on } \partial B_1.
\end{cases}
\end{equation}
By Lemma \ref{lem:jump1},  we have 
\begin{equation}\label{equ:es}\|(\hat{E}_{3, \rho},\hat{H}_{3, \rho})\|_{H(\curl, B_5)} \leq C \Big(\|J_{\inte}\|_{L^2(B_1)} + 
 \|\hat{H}_{1, \rho}\times \nu|_{\ext} \|_{H^{-1/2}(\dive_{\Gamma}, \partial B_1)}\Big). 
 \end{equation}
Applying Lemma~\ref{lem:ex-12} to $(\hat E_{2, \rho}, \hat H_{2, \rho})$, by \eqref{hatvv}, we obtain
$$
  \|\hat{H}_{2, \rho}\times \nu|_{\ext} \|_{H^{-1/2}(\partial B_1)} + \|\hat{E}_{2, \rho}\cdot \nu|_{\ext} \|_{H^{-1/2}(\partial B_1)}  \le C \rho \| J_{\ext} \|_{L^2(\R^3\setminus B_2)}. 
$$
Since 
$$
(E_{2, \rho}, H_{2, \rho}) = (E, H) - (E_{1, \rho}, H_{1, \rho}) \mbox{ in } \mR^3 \setminus B_1, 
$$
it follows that
\begin{equation*}
  \|\hat{H}_{1, \rho}\times \nu|_{\ext} \|_{H^{-1/2}(\partial B_1)} + \|\hat{E}_{1, \rho}\cdot \nu|_{\ext} \|_{H^{-1/2}(\partial B_1)}  \le C \rho \| J_{\ext} \|_{L^2(\R^3\setminus B_2)}, 
 \end{equation*}
 which is \eqref{equ:es-1}. 
 
 \medskip
 Combining \eqref{equ:es-1} and \eqref{equ:es} yields
\begin{equation}\label{continue}
\| \big(\hat{E}_{3, \rho}, \hat{H}_{3, \rho} \big)\|_{H(\curl, B_5)} \leq C\left(\|J_{\inte}\|_{L^2(B_1)}+ \rho\|J_{\ext}\|_{L^2(\R^3\setminus B_2)}\right). \end{equation}
Applying Lemma \ref{lem-FF}, and using \eqref{continue}, we obtain
\begin{equation*}
\left|\left(\hat{E}_{3, \rho}\left(\frac{x}{\rho}\right), \hat{H}_{3, \rho}\left(\frac{x}{\rho}\right)\right)\right|\leq C\rho^3\left(\|J_{\inte}\|_{L^2(B_1)}+ \rho\|J_{\ext}\|_{L^2(\R^3\setminus B_2)}\right) \mbox{ for $x\in B_{3R}\setminus B_{2R}$}.
\end{equation*}
This implies (\ref{inequ2}). The proof is complete. \qed

\subsection{Approximate cloaking in the resonant case - Proofs of Theorems \ref{thm1.1} and \ref{thm1.2}} The key ingredient in the proof of Theorems~\ref{thm1.1} and \ref{thm1.2} is the following variant of Lemma~\ref{lem:jump1}. 

 \begin{lemma}
 	\label{lem:jump2}
 	Let $0 < \rho < \rho_0$, $\theta_{\rho } = (\theta_{1, \rho}, \theta_{2, \rho}) \in [L^2(D)]^6$, and $h_{\rho} = (h_{1, \rho}, h_{2, \rho}) \in [H^{-1/2}(\dive_{\Gamma}, \partial D)]^2$, and  let $(E_{\rho},H_{\rho})\in [\bigcap_{R > 1}H(\curl, B_R \setminus \partial D)]^2$   be the unique   radiating solution to the system
 	\[
 	\begin{cases}
 	\nabla\times E_{\rho} = i\rho H_{\rho} & \text{ in } \R^3\setminus D,\\[6pt]
 	\nabla\times H_{\rho} = -i\rho E_{\rho} & \text{ in } \R^3\setminus D,\\[6pt]
 	\nabla\times E_{\rho} = i\mu H_{\rho} + \theta_{1, \rho} & \text{ in } D,\\[6pt]
 	\nabla\times H_{\rho} = -i\eps E_{\rho} + \theta_{2, \rho} & \text{ in } D,\\[6pt]
 	[E_{\rho}\times \nu] = h_{1, \rho},  [H_{\rho}\times \nu] = h_{2, \rho} & \text{ on } \partial D.
 	\end{cases}
 	\]
Assume that $\mathcal N(D) \neq \{(0, 0)\}$. We have
 \begin{equation}
 \label{jump-eq-p2}
\|(E_{\rho}, H_{\rho})\|_{L^2(B_5)}\leq C\Big(\rho^{-1}\|\theta_{\rho}\|_{L^2(D)} + \|h_{\rho}\|_{H^{-1/2}(\partial D)} + \rho^{-1}\|\dive_{\Gamma}h_{\rho}\|_{H^{-1/2}(\partial D)}\Big).\end{equation}
Assume in addition that, for all $\rho \in (0, \rho_0)$,  
 \begin{equation} \label{cond}
 \int_{D} \big( \theta_{2, \rho} \cdot\bar{\bE} - \theta_{1, \rho} \cdot\bar{\bH} \big) \, dx = 0 \mbox{ for all } (\bE, \bH) \in \mathcal N(D). 
 \end{equation}
Then 
 	\begin{equation}\label{res-jump-eq-p1}\|(E_{\rho}, H_{\rho})\|_{L^2(B_5)}\leq C\Big(\|\theta_{\rho}\|_{L^2(D)} + \|h_{\rho}\|_{H^{-1/2}(\partial D)} + \rho^{-1}\|\dive_{\Gamma}h_{\rho}\|_{H^{-1/2}(\partial D)}\Big).\end{equation}
Here $C$ denotes a  positive constant depending only on $\rho_0, \,  \eps$,  and $\mu$.  Moreover, if 
 	\[ 
 	\lim_{\rho \to 0 } \Big( \|h_{\rho}\|_{H^{-1/2}(\partial D)} + \rho^{-1}\|\dive_{\Gamma}h_{\rho}\|_{H^{-1/2}(\partial D)} \Big) =  0 \quad \mbox{ and } \quad \lim_{\rho \to 0} \theta_\rho  =  \theta \mbox{ in } [L^2(D)]^6, 
	\]
	for some $\theta = (\theta_1, \theta_2) \in [L^2(D)]^6$, then 
 \begin{equation}\label{jump-convergence-p1}
 	\lim_{\rho \to 0} (E_{\rho}, H_{\rho}) =  Cl(\theta_1, \theta_2) \mbox{ in } [H(\curl, D)]^2.
 \end{equation}
\end{lemma}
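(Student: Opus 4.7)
The argument follows the contradiction strategy of Lemma~\ref{lem:jump1}, adapted to the presence of the non-trivial kernel $\mathcal N(D)$. For \eqref{jump-eq-p2}, suppose the bound fails: there exist $\rho_n\in(0,\rho_0)$, data $(\theta_n,h_n)$, and radiating solutions $(E_n,H_n)$ with $\|(E_n,H_n)\|_{L^2(B_5)} = 1$ and
\[
\rho_n^{-1}\|\theta_n\|_{L^2(D)} + \|h_n\|_{H^{-1/2}(\partial D)} + \rho_n^{-1}\|\dive_{\Gamma} h_n\|_{H^{-1/2}(\partial D)} \to 0,
\]
with $\rho_n \to \rho_\infty \in [0,\rho_0]$ (I treat $\rho_\infty = 0$, the case $\rho_\infty > 0$ being easier). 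Lemma~\ref{lem-compact-couple} inside $D$ and Lemma~\ref{lem-SC-1} combined with Lemma~\ref{lem:compact} outside $D$ produce a subsequence converging in $[L^2_{\loc}(\R^3)]^6$ to some $(E,H)\in[H_{\loc}(\curl,\R^3)]^2$ with $[E\times\nu] = [H\times\nu] = 0$ on $\partial D$. The hypothesis $\rho_n^{-1}\|\dive_\Gamma h_n\|_{H^{-1/2}}\to 0$, combined with the identity $\nabla\times E_n\cdot\nu|_{\inte} = i\rho_n H_n\cdot\nu|_{\ext} - \dive_\Gamma h_{1,n}$ and its analogue for $H_n$, forces $\nabla\times E\cdot\nu|_{\inte} = \nabla\times H\cdot\nu|_{\inte} = 0$, so $(E,H)|_D \in\mathcal N(D)$. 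In the exterior, $(E,H)$ is curl- and divergence-free with $O(|x|^{-2})$ decay (Lemma~\ref{lem-SC-1}), tangentially matched to $(E,H)|_D$.

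Unlike in the non-resonant case, this limit need not vanish, so the argument of Lemma~\ref{lem:jump1} must be refined. For every $(\bE,\bH)\in\mathcal N(D)$, I test the system satisfied by $(E_n,H_n)$ against the global extension $(\tilde\bE,\tilde\bH)$ which coincides with $(\bE,\bH)$ in $D$ and solves the exterior curl-free, divergence-free problem with the prescribed tangential trace and $O(|x|^{-2})$ decay (existence via Lemmas~\ref{lem:potential}, \ref{potential} and \ref{composition}; uniqueness via Lemmas~\ref{uniquenessstatic} and \ref{uniquenessstaticE}). Integrating by parts in $D$ and in $B_R\setminus D$ and letting $R\to\infty$ using the decay yields a duality identity of the form
\begin{equation}\label{key-id}
\int_D\bigl(\theta_{2,n}\cdot\bar{\bE} - \theta_{1,n}\cdot\bar{\bH}\bigr)\,dx
\;=\; i\rho_n\!\int_{\R^3\setminus D}\!\bigl(H_n\cdot\bar{\tilde\bH} - E_n\cdot\bar{\tilde\bE}\bigr)\,dx + \mathcal B_n[h_n,\bE,\bH],
\end{equation}
where the boundary contribution $\mathcal B_n$ is controlled by $\|h_n\|_{H^{-1/2}(\dive_\Gamma,\partial D)}$. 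Dividing \eqref{key-id} by $\rho_n$ and using the $O(|x|^{-2})$ decay of $(\tilde\bE,\tilde\bH)$ to absorb the exterior integral into $\|(E_n,H_n)\|_{L^2(B_5)}$, one sees that the projection of $(E_n|_D,H_n|_D)$ onto $\mathcal N(D)$ is bounded by $\rho_n^{-1}\|\theta_n\| + \|h_n\| + \rho_n^{-1}\|\dive_\Gamma h_n\|$, hence tends to zero. The complementary part lies in $\mathcal N(D)^\perp$ and is controlled by the Lemma~\ref{lem:jump1}-type argument applied to the projected system, so $(E,H)|_D = 0$, and then $(E,H)\equiv 0$ in $\R^3$ by Lemma~\ref{lem:uniqueness}, contradicting $\|(E_n,H_n)\|_{L^2(B_5)}=1$.

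Under the additional compatibility condition \eqref{cond}, the left-hand side of \eqref{key-id} vanishes identically for every $n$ and every $(\bE,\bH)\in\mathcal N(D)$. The $\mathcal N(D)$-projection of $(E_n,H_n)$ is therefore bounded by $\|\theta_n\|_{L^2}$ instead of $\rho_n^{-1}\|\theta_n\|_{L^2}$, and this is precisely the improvement needed to obtain \eqref{res-jump-eq-p1}. The convergence \eqref{jump-convergence-p1} is then standard: along any $\rho_n\to 0$, the uniform bound \eqref{res-jump-eq-p1} together with the compactness lemmas extracts a subsequential $[H(\curl, D)]^2$-limit solving the full-space system of Definition~\ref{limit-2}; by the uniqueness in Lemma~\ref{lem:uniqueness} this limit is precisely $Cl(\theta_1,\theta_2)$, so the whole family converges.

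The principal technical difficulty is the rigorous derivation of the duality identity \eqref{key-id}: one must construct the global admissible extension $(\tilde\bE,\tilde\bH)$ of every resonant pair with sufficient decay to make the boundary integral at infinity vanish, track the tangential jumps $[E_n\times\nu]$ and $[H_n\times\nu]$ carefully when integrating by parts across $\partial D$, and justify that the exterior integral on the right-hand side of \eqref{key-id} is $O(\rho_n)$ times $\|(E_n,H_n)\|_{L^2(B_5)}$. Once \eqref{key-id} is in place, the dichotomy between the pre-factors $\rho^{-1}\|\theta\|$ in \eqref{jump-eq-p2} and $\|\theta\|$ in \eqref{res-jump-eq-p1} is an immediate consequence of whether \eqref{cond} is assumed.
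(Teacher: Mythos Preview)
Your strategy diverges from the paper's, and the central step has a genuine gap. The duality identity \eqref{key-id} you propose, even if carefully derived, relates the \emph{source} $\theta_n$ to boundary terms and an exterior volume integral; it does \emph{not} produce the $L^2(D)$ inner product $\int_D(E_n\cdot\bar{\bE}+H_n\cdot\bar{\bH})$ that you would need in order to bound the $\mathcal N(D)$-projection of $(E_n,H_n)$. Indeed, when you integrate by parts in $D$ against $(\bE,\bH)\in\mathcal N(D)$, the terms $\int_D\eps E_n\cdot\bar{\bE}$ and $\int_D\mu H_n\cdot\bar{\bH}$ cancel exactly, leaving only $\int_D(\theta_{2,n}\cdot\bar{\bE}-\theta_{1,n}\cdot\bar{\bH})$ equal to boundary data. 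So \eqref{key-id} gives no direct handle on the projection, and the claimed conclusion ``the projection of $(E_n|_D,H_n|_D)$ onto $\mathcal N(D)$ tends to zero'' does not follow. Separately, the exterior integral $\int_{\R^3\setminus D}(H_n\cdot\bar{\tilde\bH}-E_n\cdot\bar{\tilde\bE})\,dx$ is not obviously absolutely convergent: radiating solutions are only $O(|x|^{-1})$ at infinity, your static extensions are $O(|x|^{-2})$, and the resulting $O(|x|^{-3})$ integrand is logarithmically borderline in $\R^3$; making this rigorous would require exploiting the tangential structure of the far field against the radial leading term of the static extension, which you do not mention.

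The paper's argument is structurally different and avoids both issues. After extracting the limit $(E,H)$, it works \emph{entirely inside} $D$: using the compatibility condition and Lemma~\ref{lem:fredholm}, it produces a particular solution $(E_{1,n},H_{1,n})\in\mathcal N(D)^\perp$ absorbing $\theta_n$, lets $(E_{2,n},H_{2,n})$ be the $\mathcal N(D)$-projection of the remainder, and then shows that the rescaled residual $(\tilde E_n,\tilde H_n):=\rho_n^{-1}\big((E_n,H_n)-(E_{1,n},H_{1,n})-(E_{2,n},H_{2,n})\big)\in\mathcal N(D)^\perp$ has \emph{normal} traces $\eps\tilde E_n\cdot\nu,\ \mu\tilde H_n\cdot\nu$ converging to $E\cdot\nu|_{\ext},\ H\cdot\nu|_{\ext}$. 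A dedicated stability estimate (Lemma~\ref{stabilityinterior}) then bounds $(\tilde E_n,\tilde H_n)$, and the pair of limits $\big((E,H),(\tilde E,\tilde H)\big)$ satisfies exactly the coupled system in Lemma~\ref{lem:uniqueness}, which forces both to vanish. For \eqref{jump-eq-p2} one simply drops $(E_{1,n},H_{1,n})$ (compatibility is not available) and uses $\rho_n^{-1}\theta_n\to 0$ instead. No global test-function extension and no exterior volume integral are needed.
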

 
\begin{remark} \rm In comparison with  \eqref{jump-eq} in Lemma~\ref{lem:jump1}, in the resonant case $\mathcal N(D) \neq  \{(0, 0)\}$, estimate \eqref{jump-eq-p2} is weaker. Under the compatibility condition~\eqref{cond}, estimate \eqref{res-jump-eq-p1} is stronger than 
\eqref{jump-eq-p2}. Note that the term $\| \dive_\Gamma h_\rho\|_{H^{-1/2}(\partial D)}$ in \eqref{jump-eq} of Lemma~\ref{lem:jump1} is replaced by $\rho^{-1} \| \dive_\Gamma h_\rho\|_{H^{-1/2}(\partial D)}$ in  \eqref{res-jump-eq-p1}. However, this does not affect the estimate for the degree of visibility in the compatible resonant case (in comparison with the non-resonant case) since in the proof of Theorem 1.2, we apply Lemma~\ref{lem:jump2} to the situation where  $\|h_{\rho}\|_{H^{-1/2}(\partial D)} $ and $\rho^{-1} \| \dive_\Gamma h_\rho\|_{H^{-1/2}(\partial D)}$ are of the same order. It is worth noting that the estimates in Lemma~\ref{lem:jump2} are somehow sharp because of the optimality of the estimates in Theorems~\ref{thm1.1} and \ref{thm1.2}; this is  discussed in Section~\ref{sect-opt}. 

\end{remark} 
 
\begin{proof} We will give the proof of \eqref{res-jump-eq-p1} and \eqref{jump-convergence-p1} and explain how to modify the proof of  \eqref{res-jump-eq-p1}  to obtain \eqref{jump-eq-p2}. 

\medskip 

We prove \eqref{res-jump-eq-p1} by contradiction. Assume that there exist  sequences $(\rho_n)_{n} \subset (0, \rho_0)$, $\big((E_n, H_n)\big)_{n} \subset [\bigcap_{R > 0}H(\curl, B_R \setminus \partial D)]^2$, $(\theta_{n})_{n} = \big((\theta_{1, n}, \theta_{2, n})\big)_{n}\subset [L^2(D)]^6$ such that \eqref{cond} holds for $(\theta_{1, n}, \theta_{2, n})$, 
 	\begin{equation}\label{equ:res-contra:main}
 	\begin{cases}
 	\nabla\times E_{n} = i\rho_{n} H_n & \text{ in } \R^3\setminus D,\\[6pt]
 	\nabla\times H_{n} = -i\rho_{n} E_n & \text{ in } \R^3\setminus D,\\[6pt]
 	\nabla\times E_{n} = i\mu H_n + \theta_{1, n} & \text{ in } D,\\[6pt]
 	\nabla\times H_{n} = -i\eps E_n + \theta_{2, n} & \text{ in } D,\\[6pt]
	[E_{n}\times \nu] = h_{1, n},  [H_{n}\times \nu] = h_{2, n} \text{ on } \partial D, 
 	\end{cases}
 	\end{equation}
 \begin{equation}\label{res-unit-jump}  
	 \|(E_n, H_n)\|_{L^2(B_5)} = 1 \text{ for all } n\in \mathbb{N},
\end{equation}
and
\begin{equation}\label{res-dive-jump12} 
	\lim_{n \to + \infty } \Big(  \|\theta_{n}\|_{L^2(D)} + \|h_{n}\|_{H^{-1/2}(\partial D)} + \rho_n^{-1}\|\dive_{\Gamma} h_n\|_{H^{-1/2}(\partial D)} \Big) = 0. 
\end{equation}
 	Without loss of generality, we assume that $\rho_n \rightarrow \rho_{\infty}\in [0, \rho_0]$. We will only consider the case $\rho_{\infty} = 0$. The proof in the case $\rho_{\infty} > 0$ follows similarly and is omitted.  
 	
 	Similar to \eqref{lem:jump-p1} and \eqref{lem:jump-p2}, we  have, by \eqref{res-dive-jump12},
	\begin{equation}\label{coucou1-jp}
	\lim_{n \to + \infty } \nabla \times E_n |_{\inte}  \cdot \nu  = 0  \quad  \mbox{ and } \quad \lim_{n \to + \infty } \nabla \times H_n  \cdot \nu   |_{\inte}  = 0 \mbox{ in } H^{-1/2}(\partial D). 
	\end{equation}
	Applying Lemma \ref{lem-compact-couple} and using \eqref{res-unit-jump}, without loss of generality, one might assume that $\big((E_n, H_n)\big)_n$ converges  in $[L^2(D)]^6$ 
	and hence also in $[L^2_{\loc}(\mR^3 \setminus  D)]^6$ by applying	 \eqref{lem:ex0-conclusion} of Lemma~\ref{lem:ex-12} and i) of Lemma~\ref{lem:compact} to $B_R \setminus D$.  Moreover, the limit $(E, H) \in [H_{\loc}(\curl, \R^3)]^2$ satisfies 
 	\begin{equation}\label{equ:res-contra:limit}
 	\begin{cases}
 	\nabla\times E = \nabla\times H = 0  & \text{ in } \R^3\setminus D,\\[6pt]
 	\dive E = \dive H = 0 & \text{ in }  \R^3\setminus D,\\[6pt]
 	\nabla\times E = i\mu H  & \text{ in } D,\\[6pt]
 	\nabla\times H = -i\eps E  & \text{ in } D,
 	\end{cases}
 	\end{equation}
 	and, by applying Lemma~\ref{lem-SC-1} and letting $\rho_n \to 0$, 
 	\begin{equation}\label{res-decay-jump}
 	|\big(E(x), H(x)\big)| = O(|x|^{-2}) \mbox{ for large } |x|. 
 	\end{equation} 
Since
 	\begin{equation*}\int_{D} \big( \theta_{2, n}\cdot\bar{\bE} - \theta_{1, n}\cdot\bar{\bH} \big) \,dx = 0  \mbox{ for all } (\bE, \bH) \in \mathcal N(D), \end{equation*}
 	by Lemma \ref{lem:fredholm},  there exists a unique $(E_{1, n}, H_{1, n}) \in \mathcal N(D)^{\perp}$ solving
 	\[
 	\begin{cases}
 	\nabla\times E_{1, n} = i\mu H_{1, n} + \theta_{1, n} & \text{ in } D,\\[6pt]
 	\nabla\times H_{1, n} = -i\eps E_{1, n} + \theta_{2, n} & \text{ in } D,\\[6pt]
 	\nabla\times E_{1, n}\cdot \nu = \nabla\times H_{1, n}\cdot \nu = 0 & \text{ on } \partial D.
 	\end{cases}
 	\]
 	Denote by $(E_{2, n}, H_{2, n})$ the projection 
 	of $(E_n, H_n) - (E_{1, n}, H_{1, n})$ onto $\mathcal N(D)$ and define 
 	\[\tilde{E}_n = \rho^{-1}_{n}(E_n - E_{1, n} - E_{2, n})  \quad \mbox{ and } \quad \tilde{H}_n = \rho^{-1}_{n}(H_n - H_{1, n} - H_{2, n}) \quad \mbox{ in } D.\]
 	Then 
 	\begin{equation}\label{lem:jump-perp}
 	(\tilde{E}_n, \tilde{H}_n )\in \mathcal N(D)^{\perp}
 	\end{equation}
 	and 
 	\begin{equation}\label{equ:contra:ortho}
 	\begin{cases}
 	\nabla\times \tilde{E}_n = i\mu \tilde{H}_n &\text{ in } D,\\[6pt]
 	\nabla\times \tilde{H}_n = -i\eps \tilde{E}_n &\text{ in } D,\\[6pt]
 	\nabla\times \tilde{E}_n \cdot \nu = \rho^{-1}_n\nabla\times E_n\cdot \nu |_{\inte} &\text{ on } \partial D,\\[6pt]
 	\nabla\times \tilde{H}_n\cdot \nu = \rho^{-1}_n\nabla\times H_n\cdot \nu |_{\inte} &\text{ on } \partial D.\\
 	\end{cases}
 	\end{equation}
 	We have  
 	\begin{equation*} \rho^{-1}_n\nabla\times E_n\cdot \nu |_{\inte} = \rho^{-1}_n  \nabla\times E_n\cdot \nu|_{\ext} + \rho_n^{-1}\dive_{\Gamma}h_{1, n} =  iH_n\cdot \nu|_{\ext} + \rho_n^{-1}\dive_{\Gamma}h_{1, n}\mbox{ on } \partial D.
 	\end{equation*}
 	This implies, by \eqref{equ:contra:ortho},  
 	\begin{equation}\label{lem:jump-t1}
 	\mu\tilde{H}_n\cdot\nu =   H_n\cdot \nu|_{\ext} - i\rho_n^{-1}\dive_{\Gamma}h_{1, n}\text{ on } \partial D.
 	\end{equation}
Similarly, we have
 	\begin{equation}\label{lem:jump-t2}
 	\eps \tilde{E}_n \cdot\nu =   E_n\cdot \nu|_{\ext} - i\rho_n^{-1}\dive_{\Gamma}h_{2, n}\text{ on } \partial D.
 	\end{equation}
Using \eqref{res-dive-jump12}, we derive from \eqref{coucou1-jp}, \eqref{lem:jump-t1}, and \eqref{lem:jump-t2} that 
 	\begin{equation}
 	\label{boun-jump}
 	(\eps\tilde{E}_n\cdot\nu, \mu\tilde{H}_n\cdot \nu)  \rightarrow (E\cdot\nu|_{\ext}, H\cdot \nu|_{\ext}) \mbox{ in } H^{-1/2}(\partial D) \mbox{ as $n\rightarrow \infty$.}
 	\end{equation}
 	It follows from Lemma \ref{stabilityinterior} below that $\Big((\tilde E_n, \tilde H_n) \Big)_{n}$ is bounded in $[L^2(D)]^6$. Applying Lemma \ref{lem-compact-couple} to $(\tilde E_n, \tilde H_n)$, one can assume that
 	 \begin{equation}\label{res-convergence-aux}(\tilde{E}_n, \tilde{H}_n) \mbox{ converges to some } (\tilde{E}, \tilde{H})\in \mathcal N(D)^{\perp} \mbox{ in }  [H(\curl, D)]^2.\end{equation}
 	 Moreover,  from \eqref{equ:contra:ortho} and \eqref{boun-jump}, we have 
 	\begin{equation}\label{equ:contra:limit2}
 	\begin{cases}
 	\nabla\times \tilde{E} = i\mu \tilde{H} &\text{ in } D,\\[6pt]
 	\nabla\times \tilde{H} = -i\eps \tilde{E} &\text{ in } D,\\[6pt]
 	\eps\tilde{E}\cdot \nu = E\cdot \nu |_{\ext} &\text{ on } \partial D,\\[6pt]
 	\mu\tilde{H}\cdot \nu = H\cdot \nu |_{\ext} &\text{ on } \partial D.\\
 	\end{cases}
 	\end{equation}
 	Applying Lemma \ref{lem:uniqueness} to $(E, H)$ defined in $\mR^3$ and $(\tilde E, \tilde H)$ defined in $D$ and 
 	using \eqref{equ:res-contra:limit}, \eqref{res-decay-jump}, and \eqref{equ:contra:limit2}, we deduce   that $E= H = 0$ in $\R^3$, which contradicts  \eqref{res-unit-jump}. The proof of \eqref{res-jump-eq-p1} is complete.
 	
 	\medskip 
 	We next establish \eqref{jump-convergence-p1}.  Fix a sequence $(\rho_n)$ converging to $0$. From \eqref{res-jump-eq-p1}, one obtains that 
 	\[\|\big(E_{\rho_{n}}, H_{\rho_n}\big)\|_{L^2(B_5)}\leq C\Big(\|\theta_{\rho_n}\|_{L^2(D)}+\|h_{\rho}\|_{H^{-1/2}(\partial D)} + \rho_n^{-1}\| \dive_{\Gamma}h_{\rho_n}\|_{H^{-1/2}(\partial D)}\Big)\leq C.\]
 	Define $(\tilde E_{\rho_n}, \tilde H_{\rho_n})$ in $D$ from $(E_{\rho_n}, H_{\rho_n})$  as in the definition of $(\tilde E_{n}, \tilde H_{n})$ from $(E_{n}, H_{n})$.
 	Using the same arguments to obtain \eqref{res-convergence-aux}, we have 
 	\begin{equation}\label{unit-jump-4-1}
 	(\tilde{E}_{\rho_n}, \tilde{H}_{\rho_n}) \mbox{ converges to } (\tilde{E}, \tilde{H})\in \mathcal N(D)^{\perp} \mbox{ in }  [H(\curl, D)]^2.\end{equation}
 	Up to a subsequence,  $(E_{\rho_n}, H_{\rho_n})$ converges to $(E, H)$ in  $\big[H_{\loc}(\curl, \mR^3) \big]^2$ and 
 	\begin{equation}\label{decay-cond}
 	|\big(E(x), H(x)\big)| = O(|x|^{-2}) \mbox{ for large } |x|. 
 	\end{equation}
 	Moreover, as in \eqref{equ:contra:limit2}, one can show that \eqref{defcase2.1} holds. Since the limit is unique,  assertion~ \eqref{jump-convergence-p1} follows. 

\medskip 

We finally show how to modify the proof of \eqref{res-jump-eq-p1} to obtain \eqref{jump-eq-p2}.  The proof is also based on a  contradiction argument and is similar to the one of \eqref{res-jump-eq-p1}. However,  we denote by $(E_{2, n}, H_{2, n})$ the projection of $(E_n, H_n) $ onto $\mathcal N$ (note that $E_{1, n}$ and $H_{1, n}$ might not exist in this case)) and define 
 	\[\tilde{E}_n = \rho^{-1}_{n}(E_n - E_{2, n}) \mbox{ in } D \quad \mbox{ and } \quad \tilde{H}_n = \rho^{-1}_{n}(H_n - H_{2, n}) \mbox{ in } D.\]
 	Then 
 	\begin{equation}
 	\begin{cases}
 	\nabla\times \tilde{E}_n = i\mu \tilde{H}_n + \rho_n^{-1}\theta_{1, n} &\text{ in } D,\\[6pt]
 	\nabla\times \tilde{H}_n = -i\eps \tilde{E}_n + \rho_n^{-1}\theta_{2, n} &\text{ in } D,\\[6pt]
 	\nabla\times \tilde{E}_n \cdot \nu = \rho^{-1}_n\nabla\times E_n\cdot \nu |_{\inte} &\text{ on } \partial D,\\[6pt]
 	\nabla\times \tilde{H}_n\cdot \nu = \rho^{-1}_n\nabla\times H_n\cdot \nu |_{\inte} &\text{ on } \partial D.\\
 	\end{cases}
 	\end{equation}
 	 Since $(\rho_n^{-1} \theta_n)_n \to (0, 0)$ in $[L^2(D)]^6$, the sequence $\big((\tilde E_n, \tilde H_n) \big)_n$ converges to $(\tilde E, \tilde H)$ in $[L^2(D)]^6$. Similar to the proof of \eqref{res-jump-eq-p1}, one also derives that $(E, H) = (0, 0) \mbox{ in } \R^3$. This yields  a contradiction. The proof is complete. 
 \end{proof}

In the proof of Lemma~\ref{lem:jump2}, we used the following lemma: 

\begin{lemma}\label{stabilityinterior}
Assume that $D$ is simply connected and $(E, H)\in \mathcal N(D)^{\perp}$ satisfies
\begin{equation}
\nabla\times E = i\mu H \mbox{ in } D \quad \mbox{ and } \quad 
\nabla\times H = -i\eps E \mbox{ in } D. 
\end{equation}
We have 
\[\|(E, H)\|_{H(\curl, D)}\leq C \|(\eps E\cdot \nu, \, \mu H\cdot\nu)\|_{H^{-1/2}(\partial D)},\]
for some positive constant $C$ depending only on $D$, $\eps, \mu$. 
\end{lemma}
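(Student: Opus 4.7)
The plan is to argue by contradiction using a compactness argument based on Lemma~\ref{lem:compact}~ii), together with the fact that $\mathcal{N}(D)$ and its $L^{2}$-orthogonal complement intersect trivially.

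Suppose the desired inequality fails. Then I can produce a sequence $(E_{n}, H_{n}) \in \mathcal{N}(D)^{\perp}$ solving $\nabla\times E_{n} = i\mu H_{n}$ and $\nabla\times H_{n} = -i\eps E_{n}$ in $D$, normalized so that
\[
\|(E_{n}, H_{n})\|_{H(\curl, D)} = 1 \qquad \text{while} \qquad \|(\eps E_{n}\cdot\nu,\, \mu H_{n}\cdot\nu)\|_{H^{-1/2}(\partial D)} \to 0.
\]
The Maxwell equations automatically give $\dive(\eps E_{n}) = \dive(\mu H_{n}) = 0$ in $D$ (since $\dive\circ\nabla\times = 0$), so both sequences fall under the hypotheses of part~ii) of Lemma~\ref{lem:compact}: $H(\curl)$-bounded, with $L^{2}$-bounded (in fact vanishing) divergences and convergent normal traces in $H^{-1/2}(\partial D)$. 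Extracting a subsequence, I would obtain $E_{n} \to E$ and $H_{n} \to H$ in $[L^{2}(D)]^{3}$.

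The equations then upgrade this convergence: since $\nabla\times E_{n} = i\mu H_{n}$ converges in $[L^{2}(D)]^{3}$, $(E_{n})$ converges in $H(\curl, D)$, and symmetrically for $(H_{n})$. Passing to the limit, $(E, H)\in [H(\curl, D)]^{2}$ satisfies the Maxwell system and $\eps E\cdot\nu = \mu H\cdot\nu = 0$ on $\partial D$; by the equations these are equivalent to $\nabla\times E\cdot\nu = \nabla\times H\cdot\nu = 0$, hence $(E, H) \in \mathcal{N}(D)$. Since $L^{2}$-convergence preserves $\mathcal{N}(D)^{\perp}$ (which is $L^{2}$-closed), also $(E, H)\in \mathcal{N}(D)^{\perp}$. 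Therefore $(E, H) = (0, 0)$, contradicting $\|(E, H)\|_{H(\curl, D)} = \lim \|(E_{n}, H_{n})\|_{H(\curl, D)} = 1$.

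The only delicate step is verifying the hypotheses of Lemma~\ref{lem:compact}~ii) for both $(E_{n})$ and $(H_{n})$; once the divergence-free condition and normal-trace convergence are noted, everything else (passage to the limit, the boundary identification $\nabla\times E\cdot\nu = i\mu H\cdot\nu$, and trivial intersection $\mathcal{N}(D)\cap \mathcal{N}(D)^{\perp} = \{(0,0)\}$) is routine. I do not expect the simple-connectedness of $D$ to play any role beyond what is already encoded in how $\mathcal{N}(D)$ has been set up.
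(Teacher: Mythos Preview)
Your proof is correct and follows essentially the same contradiction--compactness scheme as the paper. The only cosmetic differences are that the paper normalizes in $L^{2}$ rather than $H(\curl)$, and invokes the coupled compactness result (Lemma~\ref{lem-compact-couple}) instead of applying Lemma~\ref{lem:compact}~ii) separately to $E_{n}$ and $H_{n}$; since here $\dive(\eps E_{n})=\dive(\mu H_{n})=0$ is automatic from the equations, your direct use of Lemma~\ref{lem:compact}~ii) is perfectly adequate and arguably a touch more elementary.
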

\begin{proof}
It suffices to prove that
\begin{equation}
\|(E, H)\|_{L^2(D)}\leq C \|(\eps E\cdot \nu, \mu H\cdot\nu)\|_{H^{-1/2}(\partial D)}.
\end{equation}
The proof is via a standard contradiction argument. Assume that there exists a sequence
$\big((E_n, H_n)\big)_{n}\subset \mathcal N(D)^{\perp}$ such that 
\begin{equation}
\nabla\times E_n = i\mu H_n \mbox{ in } D \quad  \mbox{ and } \quad 
\nabla\times H_n = -i\eps E_n \mbox{ in } D,
\end{equation}
\begin{equation}\label{estimate-unit}
\|(E_n, H_n)\|_{L^2(D)} = 1 \mbox{ for all $n$,}
\end{equation}	
and 
\begin{equation}
\big(\eps E_n\cdot \nu, \, \mu H_n\cdot \nu \big)\to 0 \mbox{ in $ [H^{-1/2}(\partial D)]^2$.}
\end{equation}
Applying Lemma \ref{lem-compact-couple}, one might assume that $(E_n, H_n)$ converges to some  $(E_0, H_0)$  in $[H(\curl, D)]^2$. Then $(E_0, H_0) \in {\mathcal N}(D)^{\perp}$ and
\begin{equation}
\begin{cases}
\nabla\times E_0 = i\mu H_0 &\mbox{ in } D, \\[6pt]
\nabla\times H_0 = -i\eps E_0 &\mbox{ in } D,\\[6pt]
\nabla\times E_0\cdot \nu = \nabla\times H_0\cdot \nu = 0 &\text{ on } \partial D. 
\end{cases}
\end{equation}
It follows that  $(E_0, H_0)\in {\mathcal N(D)}^{\perp} \cap \mathcal N(D)$. Hence $(E_0, H_0) = (0, 0) \mbox{ in } D$, which contradicts \eqref{estimate-unit}.
\end{proof}

\medskip 
We are ready to give the 

\medskip

\noindent{\bf Proof of Theorem \ref{thm1.1}.} In this proof, we use the same notations as in the one of Theorem~\ref{thm1}. 
Similar to the proof of Theorem~\ref{thm1}, using Lemmas \ref{lem:ex-12} and \ref{lem-FF}, we have, for $R > 1/2$,  
\begin{equation}
\label{inequ1-thm1.1}
\|(E_{2, \rho}, H_{2, \rho})\|_{L^2(B_{3R}\setminus B_{2R})}\leq C\rho^3\|J_{\ext}\|_{L^2(\R^3\setminus B_2)}.
\end{equation}
Involving the same method used to prove \eqref{equ:es-1} and \eqref{inequ2}, however, applying \eqref{res-jump-eq-p1} in Lemma~\ref{lem:jump2} instead of Lemma~\ref{lem:jump1}, we have 
\begin{equation}\label{equ:es-1-thm1.1}
  \|\hat{H}_{1, \rho}\times \nu|_{\ext} \|_{H^{-1/2}(\partial B_1)} + \|\hat{E}_{1, \rho}\cdot \nu|_{\ext} \|_{H^{-1/2}(\partial B_1)}  \le C \rho \| J_{\ext} \|_{L^2(\R^3\setminus B_2)}
 \end{equation}
and
\begin{equation}
\label{inequ2-thm1.1}
\|(E_{3, \rho}, H_{3, \rho})\|_{L^2(B_{3R}\setminus B_{2R})}\leq C\big(\rho^3\|J_{\ext}\|_{L^2(\R^3\setminus B_2)} + \rho^2 \|J_{\inte}\|_{L^2(B_1)}\big).
\end{equation}
It is clear that  \eqref{thm1.1-est1}  follows from  (\ref{inequ1-thm1.1}) and  (\ref{inequ2-thm1.1}). Moreover, by Lemma~\ref{lem:jump1}, assertion  \eqref{thm1.1-CV} now follows from \eqref{equ:es-1-thm1.1}  and the fact that $(E_c, H_c) = (\hat E_{3, \rho}, \hat H_{3, \rho})$ in $B_1$.  \qed

\bigskip 
\noindent{\bf Proof of Theorem \ref{thm1.2}.} In this proof, we use the same notations as in the one of Theorem~\ref{thm1}. 
Similar to the proof of Theorem~\ref{thm1}, using Lemmas \ref{lem:ex-12} and \ref{lem-FF}, we have, for $R > 1/2$,  
\begin{equation}
\label{inequ1-thm1.2}
\|(E_{2, \rho}, H_{2, \rho})\|_{L^2(B_{3R}\setminus B_{2R})}\leq C\rho^3\|J_{\ext}\|_{L^2(\R^3\setminus B_2)}.
\end{equation}
Using the same method used to prove \eqref{inequ2}, however, applying \eqref{jump-eq-p2} in Lemma~\ref{lem:jump2} instead of Lemma~\ref{lem:jump1}, we have 
\begin{equation}\label{equ:es-1-thm1.2}
  \|\hat{H}_{1, \rho}\times \nu|_{\ext} \|_{H^{-1/2}(\partial B_1)} + \|\hat{E}_{1, \rho}\cdot \nu|_{\ext} \|_{H^{-1/2}(\partial B_1)}  \le C \rho \| J_{\ext} \|_{L^2(\R^3\setminus B_2)}
 \end{equation}
and
\begin{equation}
\label{inequ2-thm1.2}
\|(E_{3, \rho}, H_{3, \rho})\|_{L^2(B_{3R}\setminus B_{2R})}\leq C\big( \rho^3\|J_{\ext}\|_{L^2(\R^3\setminus B_2)} + \rho \|J_{\inte}\|_{L^2(B_1)}\big).
\end{equation}
It is clear that  \eqref{thm1.2-est1}  follows from  (\ref{inequ1-thm1.2}) and  (\ref{inequ2-thm1.2}). 

\medskip 

It remains to prove   \eqref{explosion}.  Using the linearity of the system and applying Theorem~\ref{thm1.1}, one can assume that $J_{\ext} = 0$, and 
   $J_{\inte} = \bE_0 $ for some $(\bE_0, \bH_0)\in \mathcal N \setminus \{(0, 0)\}$. From the definition of ${\mathcal N}$, we have 
   $$
   \bE_0 \not \equiv 0 \quad \mbox{ and } \quad \bH_0 \not \equiv 0 \mbox{ in } B_1.  
   $$  
Note that $(\hat{E}_c, \hat{H}_c)\in [H_{\loc}(\curl, \R^3)]^2$  is the unique  radiating solution to the system
   \begin{equation}\label{eqEc}
   \begin{cases}
   \nabla \times \hat{E}_c = i\omega \rho \hat{H}_c  & \text{ in } \R^3\setminus B_1,\\[6pt]
   \nabla \times \hat{H}_c = -i\omega \rho \hat{E}_c  & \text{ in } \R^3\setminus B_1,\\[6pt]
   \nabla \times \hat{E}_c = i\omega \mu \hat{H}_c  & \text{ in }  B_1,\\[6pt]
   \nabla \times \hat{H}_c = -i\omega \eps \hat{E}_c  + \bE_0 & \text{ in }  B_1.
   \end{cases}
   \end{equation}
   We prove \eqref{explosion} by contradiction. Assume that there exists a sequence $\big(\rho_n\big)_{n }\subset (0, 1/2)$ converging to $0$ such that 
   \begin{equation}
   \label{explosion:contr}
   \lim\limits_{n\rightarrow \infty} \rho_n\|(E_n, H_n)\|_{L^2(B_1)} = 0,
   \end{equation}
   where $(E_n, H_n) \in [H_{\loc}(\curl, \R^3)]^2$ is the unique  radiating solution to the system
   \begin{equation}\label{equ:contra:case2}
   \begin{cases}
   \nabla \times E_n = i\omega \rho_n H_n  & \text{ in } \R^3\setminus B_1,\\[6pt]
   \nabla \times H_n = -i\omega \rho_n E_n  & \text{ in } \R^3\setminus B_1,\\[6pt]
   \nabla \times E_n = i\omega \mu H_n  & \text{ in }  B_1,\\[6pt]
   \nabla \times H_n = -i\omega \eps E_n  + \bE_0 & \text{ in }  B_1.
   \end{cases}
   \end{equation}
   Applying Lemma \ref{lem:trace} to $(E_n, H_n)$ in $B_1$ and  using (\ref{explosion:contr}) and \eqref{equ:contra:case2}, we obtain
   \begin{equation}
   \lim \limits_{n\rightarrow \infty} \rho_n \| \big(E_n\times \nu, H_n\times \nu \big) \|_{H^{-1/2}( \partial B_1)} = 0.
   \end{equation}
   By Lemma \ref{lem:ex-12}, we have 
   \begin{equation}\label{equ:explo:1} \lim \limits_{n\rightarrow \infty} \rho_n \| \big(E_n, H_n\big)\|_{L^2( B_2\setminus B_1)} = 0.\end{equation}
   Since $\dive{E_n} = \dive{H_n} = 0$ in $\R^3\setminus B_1$, we have,  by Lemma \ref{lem:trace} and \eqref{equ:explo:1}, 
   \[\lim \limits_{n\rightarrow \infty} \rho_n \| \big(E_n \cdot \nu,   H_n\cdot \nu \big) \|_{H^{-1/2}(\partial B_1)} = 0.\]
  It follows that 
   \begin{equation}
   \label{con:blow}
   \lim \limits_{n\rightarrow \infty} \|\big( \dive_\Gamma (E_n \times \nu ), \dive_\Gamma (H_n\times \nu ) \big)\|_{H^{-1/2}(\partial B_1)} = \lim \limits_{n\rightarrow \infty} \|\big( \nabla \times E_n \cdot \nu, \nabla \times H_n \cdot  \nu ) \big)\|_{H^{-1/2}(\partial B_1)} = 0.
   \end{equation}
Using  the fact that $(\bE_0, \bH_0)\in \mathcal N$, we derive from  \eqref{equ:contra:case2} that 
   \[\int_{B_1}\mu^{-1}\nabla \times \bar{\bE}_0\cdot\nabla \times E_n\, dx - \omega^2\int_{B_1}\eps \bar{\bE}_0\cdot E_n\, dx = -i\omega\int_{\partial B_1}(\nu\times E_n)\cdot\bar{\bH}_0 ds,\]
   and
   \[\int_{B_1}\mu^{-1}\nabla \times E_n \cdot\nabla \times \bar{\bE}_0\,dx - \omega^2\int_{B_1}\eps E_n\cdot \bar{\bE}_0\,dx = -i\omega\int_{\partial B_1}(\nu\times H_n)\cdot \bar{\bE}_0\, ds + i\omega\int_{B_1}\bE_0\cdot\bar{\bE}_0.\]
Considering the imaginary part of the two identities yields 
   \begin{equation}\label{con:equa} \Re\left\{\int_{\partial B_1}(\nu\times H_n)\cdot \bar{\bE}_0  ds + \int_{\partial B_1}(\nu\times E_n)\cdot \bar{\bH}_0 ds\right\}  = \int_{B_1}|\bE_0|^2 > 0 .\end{equation}
   However, since $ \nabla\times \bH_0 \cdot \nu = 0$ on $\partial B_1$, by Lemma \ref{potential}, there exists $\bH \in H(\curl, B_1)$ such that \[\nabla\times \bH_0 = \nabla\times \bH \mbox{ in $B_1$} \quad \mbox{ and } \quad \bH\times \nu = 0 \mbox{ on } \partial B_1. \] Since $\nabla\times (\bH_0 - \bH) = 0$ in $B_1$, by Lemma \ref{lem:potential}, there exists $\xi\in H^1(B_1)$ such that
   \[\bH_0 - \bH = \nabla \xi \mbox{ in } B_1,\]
   and hence 
   \[\bH_0\times \nu = \nabla\xi \times \nu \mbox{ on } \partial B_1.\]
   We have thus
   \begin{equation}\label{thm1.2-toto1}
   \int_{\partial B_1}(\nu \times E_n)\cdot \bar{\bH}_0 \, ds = \int_{\partial B_1}(\nu\times E_n)\cdot \nabla \bar{\xi}  \, ds = \int_{\partial B_1}\dive_{\Gamma}(\nu\times E_n)\, \bar{\xi}  \, ds \to 0 \mbox{ as } n \to + \infty, 
   \end{equation}
   thanks to (\ref{con:blow}). Similarly,  we obtain 
    \begin{equation}\label{thm1.2-toto2}
   \int_{\partial B_1}(\nu\times H_n)\cdot \bar{\bE}_0 \, ds \to 0 \mbox{ as } n \to + \infty. 
   \end{equation}
   Combining  (\ref{con:equa}), \eqref{thm1.2-toto1}, and \eqref{thm1.2-toto2}, we obtain a contradiction. Hence (\ref{explosion}) holds. The proof is complete.\qed

   \section{Optimality of the degree of visibility}\label{sect-opt}
   
   In this section, we present various settings that justify the optimality of the degree of visibility in  Theorems \ref{thm1}, \ref{thm1.1}, and \ref{thm1.2}. In what follows in this section, we {\bf assume} that 
   \begin{equation}\label{em-O}
   \eps = \mu = I \mbox{ (the identity matrix)    in }  B_1.
   \end{equation}   
Let  $h_n^{(1)}$ $(n \in \N)$ be the spherical Hankel function of first kind of order $n$ and let $j_n, \ y_n$ denote respectively its real and imaginary parts.  
For $-n\leq m\leq n, n\in \N$, denote $Y_n^m$ the spherical harmonic function of order $n$ and degree $m$ and set    
$$
U_n^m(\hat x) := \nabla_{\partial B_1}Y_n^m(\hat x) \quad \mbox{ and } \quad V_n^m(\hat x) := \hat x \times U_n^m(\hat x) \mbox{ for } \hat x \in \partial B_1.
$$
We recall that $Y_n^m(\hat x)\hat x$, $U_n^m(\hat x)$, and $V_n^m(\hat x)$ for $-n\leq m\leq n, n\in \N$ form an orthonormal basis of $[L^2(\partial B_1)]^3$.   


\medskip 
We have 
   
\begin{lemma}\label{opt-char} System \eqref{medium:cloak} is non-resonant if and only if $j_n(\omega)\neq 0$ for all $n \ge 1$.
\end{lemma}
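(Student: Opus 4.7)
The plan is to diagonalize the spectral problem defining $\mathcal N$ by expanding in spherical vector multipoles, and to read off the scalar condition on $j_n(\omega)$ directly from the radial traces on $\partial B_1$.

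First, I would recast the boundary conditions. Since $\eps = \mu = I$ in $B_1$, the interior equations give $\nabla\times\bE\cdot\nu = i\omega\bH\cdot\nu$ and $\nabla\times\bH\cdot\nu = -i\omega\bE\cdot\nu$, so system \eqref{def:resonance} is equivalent to the pair of Maxwell equations in $B_1$ together with
\[
\bE\cdot\nu = \bH\cdot\nu = 0 \quad \text{on } \partial B_1.
\]
Taking divergences (and using $\omega > 0$) yields $\dive\bE = \dive\bH = 0$ in $B_1$; the double-curl identity then forces both $\bE$ and $\bH$ to satisfy the vector Helmholtz equation $\Delta\mathbf u + \omega^2\mathbf u = 0$, and they are smooth by elliptic regularity.

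Next, I would use the classical regular multipole basis. For $n\ge 1$ and $|m|\le n$, set
\[
\mathbf{M}_n^m(x) := j_n(\omega r)\, V_n^m(\hat x), \qquad \mathbf{N}_n^m(x) := \omega^{-1}\nabla\times\mathbf{M}_n^m(x).
\]
Since $\mathbf{M}_n^m = -\nabla\times\bigl(x\, j_n(\omega r)\, Y_n^m(\hat x)\bigr)$, both $\mathbf{M}_n^m$ and $\mathbf{N}_n^m$ are divergence-free smooth Helmholtz solutions in $B_1$, and $\nabla\times\mathbf{N}_n^m = \omega\,\mathbf{M}_n^m$. A direct computation in spherical coordinates, relying on the surface-Laplace eigenvalue relation $\Delta_{S^2} Y_n^m = -n(n+1)Y_n^m$, gives the key radial identities
\[
\mathbf{M}_n^m\cdot\hat x \equiv 0, \qquad \mathbf{N}_n^m(x)\cdot\hat x = -\frac{n(n+1)}{\omega r}\, j_n(\omega r)\, Y_n^m(\hat x).
\]
By the standard completeness of this system for divergence-free regular Helmholtz solutions in a ball, $\bE$ expands as $\bE = \sum_{n,m}(a_n^m\mathbf{M}_n^m + b_n^m\mathbf{N}_n^m)$, and then $\nabla\times\bE = i\omega\bH$ combined with $\nabla\times\mathbf{N}_n^m = \omega\mathbf{M}_n^m$ forces $\bH = -i\sum_{n,m}(a_n^m\mathbf{N}_n^m + b_n^m\mathbf{M}_n^m)$.

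Finally, the radial boundary conditions $\bE\cdot\hat x|_{r=1} = \bH\cdot\hat x|_{r=1} = 0$ decouple, by $L^2(\partial B_1)$-orthonormality of $\{Y_n^m\}_{n,m}$, into the scalar conditions $a_n^m\, j_n(\omega) = b_n^m\, j_n(\omega) = 0$ for every $n\ge 1$, $|m|\le n$. This yields both implications at once: if $j_n(\omega)\neq 0$ for every $n\ge 1$, then all coefficients vanish, so $\mathcal N = \{(0,0)\}$; conversely, if $j_N(\omega) = 0$ for some $N \ge 1$, then $(\mathbf{M}_N^0, -i\,\mathbf{N}_N^0)$ is a nontrivial element of $\mathcal N$, since its electric part is purely tangential and its magnetic part has vanishing radial trace. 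The only mildly nontrivial step is the radial identity for $\mathbf{N}_n^m\cdot\hat x$, which amounts to writing $V_n^m$ in the spherical frame and applying $\Delta_{S^2}Y_n^m = -n(n+1)Y_n^m$; the rest is bookkeeping with the multipole decomposition.
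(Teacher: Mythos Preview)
Your argument is correct and follows essentially the same route as the paper: both rely on the multipole/separation-of-variables expansion of regular Maxwell solutions in $B_1$ to reduce the boundary conditions $\bE\cdot\nu=\bH\cdot\nu=0$ to the scalar constraints $j_n(\omega)=0$, and both exhibit the same explicit nonzero element $(\mathbf{M}_N^0,-i\mathbf{N}_N^0)$ of $\mathcal N$ when some $j_N(\omega)=0$. The paper's proof is terser---it simply cites \cite[Theorem 2.48]{Kirsch} for the converse direction---whereas you spell out the expansion and the radial trace identities; the content is the same (your sign on $\mathbf{N}_n^m\cdot\hat x$ appears to differ from the paper's by $-1$, but this is immaterial to the argument).
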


\begin{proof} Assume that $j_n(\omega) = 0$ for some $n \ge 1$. Fix such an $n$ and define, in $B_1$, 
\begin{equation*}
\bE_0(x) = j_n(\omega r)V_n^0(\hat x)  \mbox{  and  } \bH_0(x) = \frac{n(n+1)}{i\omega r}j_n(\omega r)Y_n^0(\hat x)\hat x + \frac{1}{i\omega r}[j_n(\omega r) + \omega r j'_n(\omega r)]U^0_n(\hat x),  
\end{equation*}
where $r = |x|$ and $\hat x = x / |x|$.  Then $(\bE_0, \bH_0) \in \mathcal N$. System \eqref{medium:cloak} is hence resonant.  Conversely, assume that $j_n(\omega)\neq 0$ for all $n\in \N$. Using separation of variables (see, e.g., \cite[Theorem 2.48]{Kirsch}), one can check that if $(\bE_0, \bH_0) \in \mathcal N$ then $(\bE_0, \bH_0) = (0, 0)$ in $B_1$. 
\end{proof}
   
The following result implies  the optimality of \eqref{thm1-est1} with respect to $J_{\ext}$.
For computational ease, instead of considering fields generated by $J_{\ext}$, we deal with fields generated by a plane wave. 
In what follows, we assume that $0 < \rho < 1/2$.  We have

\begin{proposition}\label{prop-nonresonant1} Set  $v(x) : = (0, 1, 0)e^{i\omega x_3}$ for $ x \in \R^3$. 
There exists $\omega > 0$  such that
$$
\|E_c \|_{L^2(B_4\setminus B_2)} \ge C \rho^3, 
$$
for some positive constant $C$ independent of $\rho$. Here $(E_c, H_c) \in [H_{\loc}(\curl, \R^3)]^2$  is uniquely determined by 
\begin{equation*}
\begin{cases}
\nabla \times E = i\omega \mu_c H &\text{ in } \mathbb{R}^3,\\[6pt]
\nabla \times H = -i\omega \eps_c E&\text{ in } \mathbb{R}^3, 
\end{cases}
\end{equation*}
where $E  = E_c  + v $ and $H = H_c + \frac{1}{i \omega} \nabla\times v$ and by the radiation condition.  Here $(\eps_c, \mu_c)$ is defined by  \eqref{medium:cloak} where $(\eps, \mu)$ is given in \eqref{em-O}. 
\end{proposition}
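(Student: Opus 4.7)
The plan is to reduce the problem to Mie scattering from a small sphere. Set $(\pbE_\rho, \pbH_\rho) := (F_\rho^{-1}*E_c, F_\rho^{-1}*H_c)$. By Lemma~\ref{pre:cha} together with \eqref{em-O} and \eqref{epsmurho}, the pair $(\pbE_\rho + v,\ \pbH_\rho + (i\omega)^{-1}\nabla\times v)$ is the total electromagnetic field for the plane wave $v$ impinging on the ball $B_\rho$ filled with the homogeneous medium $(\rho^{-1}I,\rho^{-1}I)$ in vacuum, and $\pbE_\rho$ itself is the associated radiating scattered field. Since $F_\rho$ is the identity outside $B_2$, $E_c = \pbE_\rho$ on $B_4\setminus B_2$, so it suffices to establish $\|\pbE_\rho\|_{L^2(B_4\setminus B_2)}\geq C\rho^3$.

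I would then expand $\pbE_\rho$ in vector spherical harmonics. Outside $B_\rho$ the scattered field is an outgoing TE/TM multipole series built from the radial factors $h_n^{(1)}(\omega r)$ together with $U_n^m, V_n^m$, while inside $B_\rho$ the interior wave number is $k_1 = \omega/\rho$, so the interior radial factor $j_n(k_1 r)$ evaluates at $\partial B_\rho$ to exactly $j_n(\omega)$, and its derivative to $j_n'(\omega)$. Writing the transmission conditions for $\pbE\times\nu$ and $\pbH\times\nu$ on $\partial B_\rho$ yields, for each $(n,m)$, a $2\times 2$ linear system whose inhomogeneity is the (known, non-zero) $n$-th Mie component of $v = (0,1,0)e^{i\omega x_3}$.

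Solving these systems by Cramer's rule and substituting the small-argument asymptotics $j_n(\omega\rho)\sim (\omega\rho)^n/(2n+1)!!$ and $h_n^{(1)}(\omega\rho)\sim -i(2n-1)!!/(\omega\rho)^{n+1}$ into the $n=1$ row produces dipole coefficients of the form
$$
a_1^m,\ b_1^m \ =\ \gamma_m(\omega)\, j_1(\omega)\, \rho^3 + O(\rho^5),
$$
with $\gamma_m(\omega)$ an explicit non-vanishing function of $\omega$ alone. By Lemma~\ref{opt-char}, non-resonance of \eqref{medium:cloak} is equivalent to $j_n(\omega)\neq 0$ for every $n\geq 1$; choosing any such $\omega$ in particular guarantees $j_1(\omega)\neq 0$, so these coefficients are of exact order $\rho^3$. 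Orthogonality of the vector spherical harmonics on $\partial B_r$ together with the uniform lower bound $|h_1^{(1)}(\omega r)|\geq c>0$ on the compact annulus $r\in[2,4]$ then yields
$$
\|E_c\|_{L^2(B_4\setminus B_2)}^2 \ \geq\ c\sum_{m=-1}^{1}\bigl(|a_1^m|^2 + |b_1^m|^2\bigr)\int_2^4 |h_1^{(1)}(\omega r)|^2\, r^2\, dr \ \geq\ C\rho^6,
$$
which is the desired lower bound. The main technical obstacle is the explicit small-$\rho$ asymptotic analysis of the $2\times 2$ matching system: one must carefully track the unusual coupling between the interior regime (fixed size parameter $k_1\rho = \omega$) and the exterior regime (vanishing size parameter $\omega\rho$), and verify that the leading terms in the Cramer numerator and denominator do not accidentally cancel, which is exactly the place where the non-resonance hypothesis $j_1(\omega)\neq 0$ enters in an essential way.
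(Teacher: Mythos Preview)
Your overall strategy --- reduce to a Mie-type scattering problem for a small ball, isolate the $n=1$ multipole coefficient, and show it is of exact order $\rho^3$ --- is precisely the route the paper takes. The paper carries this out after the rescaling $x\mapsto \rho x$ (so that the sphere becomes $B_1$, with interior frequency $\omega$ and exterior frequency $\omega\rho$) and works with a single $V_1^1$ coefficient, but the content is the same.

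There is, however, a genuine gap in your reduction. You set $\pbE_\rho := F_\rho^{-1}*E_c$ and claim that $\pbE_\rho + v$ is the Mie total field. This is false in $B_2\setminus B_\rho$: the change of variables must be applied to the \emph{total} field $E = E_c + v$, and since $F_\rho\neq\mathrm{id}$ in $B_2$ one has $F_\rho^{-1}*v\neq v$ there. Concretely, $F_\rho^{-1}*E_c$ does \emph{not} satisfy source-free Maxwell in the annulus $B_2\setminus B_\rho$ (a source $(\mu_c-I)\nabla\times v$ survives after transformation), so your outgoing multipole expansion of $\pbE_\rho$ down to $\partial B_\rho$ is not available and the transmission-condition computation cannot be performed as written. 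The fix is exactly what the paper does: transform the total field, $\pbE_\rho := F_\rho^{-1}*E$, and then subtract $v$ \emph{only outside} $B_\rho$; this produces a piecewise-defined field with explicit jumps $-v\times\nu$, $-\tfrac{1}{i\omega}(\nabla\times v)\times\nu$ on $\partial B_\rho$, which after rescaling become the inhomogeneities in the $2\times 2$ system you describe. Outside $B_2$ one still has $F_\rho^{-1}*E - v = E_c$, so the final lower bound on $\|E_c\|_{L^2(B_4\setminus B_2)}$ goes through unchanged.

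A minor second point: your asserted form $a_1^m = \gamma_m(\omega)\, j_1(\omega)\,\rho^3 + O(\rho^5)$ is not quite right --- when you solve the $2\times 2$ system the factor $j_1(\omega)$ appears in both numerator and denominator and cancels at leading order (cf.\ the paper's formula for $A_{\ext}$). The hypothesis $j_1(\omega)\neq 0$ is used to guarantee that the denominator $a_{\ext}(\omega\rho)-a_{\inte}(\omega)$ has its expected $\rho^{-1}$ leading behaviour, not to supply a visible prefactor. This does not affect the conclusion $|a_1^m|\sim\rho^3$, but you should correct the stated asymptotics.
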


 \begin{proof} Let $\omega > 0$ be  such that $j_1(\omega) \neq 0$. Set 
$$
 (\pbE_{\rho}, \pbH_{\rho}) = (F^{-1}_{\rho}*E, F^{-1}_{\rho}*H) \mbox{ in } \R^3,
$$
and define 
   	\[
   	(\bE_{\rho},  \bH_{\rho}) = \left\{\begin{array}{cl}
   	 (\pbE_{\rho} - v, \pbH_{\rho} - \frac{1}{i \omega} \nabla\times v ) &\mbox{ in } \R^3\setminus B_{\rho},\\[6pt]
   	 (\pbE_{\rho}, \pbH_{\rho}) &\mbox{ in } B_{\rho}.
   	\end{array} \right.
   	\]
	Set 
	$$
	(\tilde  \bE_{\rho},  \tilde \bH_{\rho})  =(\bE_{\rho},  \bH_{\rho}) (\rho \, \cdot) \quad \mbox{ and } \quad \tilde v =  v (\rho \,  \cdot) \quad \mbox{ in } \R^3. 
	$$
   	We have 
   	\begin{equation}\label{eq-optimal2}
   	\begin{cases}
   	\nabla\times \tilde \bE_{\rho} = i\rho\omega \tilde \bH_{\rho}  &\mbox{ in } \R^3\setminus B_1,\\[6pt]
   	\nabla\times \tilde \bH_{\rho} = -i\rho\omega \tilde \bE_{\rho}  &\mbox{ in } \R^3\setminus B_1,\\[6pt]
   	\nabla\times \tilde \bE_{\rho} = i\omega \tilde \bH_{\rho}  &\mbox{ in } B_1,\\[6pt]
   	\nabla\times \tilde \bH_{\rho} = -i\omega \tilde \bE_{\rho}  &\mbox{ in } B_1,\\[6pt]
   	[\tilde \bE_{\rho}\times \nu] = - \tilde v \times \nu, \quad  [\tilde \bH_{\rho}\times \nu] = -  \frac{1}{i \rho \omega} (\nabla\times \tilde v)  \times \nu &\mbox{ on } \partial B_{1}. 
   	\end{cases}
   	\end{equation}
 Denote 
   	\[
   	A_{\ext} = \int_{\partial B_1}\tilde \bE_{\rho}|_{\ext}\cdot \bar V_1^1 \,ds \quad \mbox{ and } \quad  A_{\inte} = \int_{\partial B_1}\tilde  \bE_{\rho}|_{\inte}\cdot \bar V_1^1\,ds. 
   	\]
Using the transmission condition for $\tilde \bE_{\rho}\times\nu$ on $\partial B_1$ and  considering only the component with respect to $V_1^1$ for $\tilde \bE_{\rho}$ (see, e.g., \cite[Theorem 2.48]{Kirsch}), we have 
   	\begin{equation}
   	\label{order3-eq2.1}
   	 A_{\ext} -  A_{\inte} = \alpha,
   	 \end{equation}
   	where 
   	\[
   	\alpha = -  \int_{\partial B_1} \tilde v \cdot \bar V_{1}^1 \  ds.
	   	\]
 Using the transmission condition for $\tilde \bH_{\rho}\times\nu$ on $\partial B_1$ and considering the component with respect to $U_1^1$ for $\tilde \bH_{\rho}$ (see, e.g., \cite[Theorem 2.48]{Kirsch}), we have
   	\begin{equation}\label{order3-eq2.2}
   	\dsp a_{\ext}(\omega \rho) A_{\ext} - a_{\inte}(\omega) A_{\inte} = \beta,
   	\end{equation}
   	where
	\[
	a_{\ext}(r) = \frac{\big(h^{(1)}_1(r)+ rh'^{(1)}_1(r) \big)}{- i r h^{(1)}_1(r)}, \quad  \quad a_{\inte}(r) = \frac{\big(j_1(r)+ rj'_1(r) \big)}{-i r j_1(r)},
  \quad \mbox{ and } \quad \beta = \alpha a_{\inte}(\omega \rho).\] 
Combining \eqref{order3-eq2.1} and \eqref{order3-eq2.2} yields
\begin{equation}\label{pro1-O-part0}
A_{\ext} = \frac{\beta- \alpha a_{\inte} (\omega)}{a_{\ext}(\omega \rho) - a_{\inte}(\omega)}. 
\end{equation}
Since
\begin{equation}\label{pro1-O-part0.1}
   h^{(1)}_1(x) = i\frac{d}{d x}\frac{e^{ix}}{x} = \frac{\sin x - x\cos x}{x^2} +i \frac{x\sin x - \cos x}{x^2}, \mbox{ for } x\in \R,
   \end{equation}
we derive that 
\begin{equation}\label{pro1-O-part1}
   	\liminf\limits_{\rho \to 0}\rho^{-1}\left| a_{\ext} (\omega \rho) - a_{\inte} (\omega) \right|^{-1}  > 0.
\end{equation}
Since, by separation of variables, (see, e.g., \cite[Theorem 2.48]{Kirsch}), 
\[
\left|\int_{\partial B_1}\tilde v \cdot \bar V_1^1\, ds \right| = \left| \frac{j_1(\omega \rho)}{j_1(\omega)}\int_{\partial B_1} v \cdot  \bar V_1^1\, ds\right|,
\] 
we have
\begin{equation}\label{pro1-O-part1.1}
C^{-1}\rho \leq|\alpha| \leq C\rho
\end{equation}
for some positive constant $C$ independent of $\rho$. From \eqref{pro1-O-part1.1} and the fact that 
\[
|a_{\inte}(\omega \rho)| \geq C\rho^{-1},
\]
we have  
   	\begin{equation}\label{pro1-O-part2}
   	\liminf\limits_{\rho \to 0}\left| \beta -  \alpha a_{\inte}(\omega) \right|> 0. 
   	\end{equation}
Combining \eqref{pro1-O-part1} and \eqref{pro1-O-part2} yields
\begin{equation}\label{pro1-O-part3}
\liminf_{\rho \to 0} \rho^{-1}|A_{\ext}| > 0. 
\end{equation}
Since, again by separation of variables,
   	\begin{equation*}
   	\int_{\partial B_1} \tilde\bE_\rho (r \hat x) \cdot  \bar V^{1}_1 (\hat x) \, d \hat x =  \frac{h^{(1)}_1(\omega\rho r)}{h^{(1)}_1(\omega\rho )}  A_{\ext}, 
   	\end{equation*}
and, by Lemma~\ref{pre:cha},  
$$
\tilde \bE_\rho (x/ \rho) = \bE_\rho (x) = \mathcal E_\rho(x) - v(x) = E_c(x)  \mbox{ for } x \in B_4 \setminus B_2, 
$$
we obtain the conclusion  from \eqref{pro1-O-part0.1} and  \eqref{pro1-O-part3}.  \end{proof}

We next show the optimality of  \eqref{thm1-est1} with respect to $J_{\inte}$.
   
   \begin{proposition} \label{pro-O-2} Assume that the system is non-resonant and  $J_{\ext} = 0$ in $\R^3\setminus B_2$. There exists $J_{\inte}\in [L^2(B_1)]^3$ such that 
   	\[ \liminf_{\rho \to 0 } \rho^{-2 } \left\| H_c \right\|_{L^2(B_4\setminus B_2)} > 0.\]
   
   \end{proposition}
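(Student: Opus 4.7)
The plan mirrors the mode-matching argument of Proposition~\ref{prop-nonresonant1}. Passing to $(\pbE_\rho, \pbH_\rho) := (F_\rho^{-1}*E_c, F_\rho^{-1}*H_c)$ and the rescaled pair $(\hat{\pbE}_\rho, \hat{\pbH}_\rho)$ defined by $\hat v(x) = \rho v(\rho x)$, we obtain the Maxwell system with wavenumber $\omega\rho$ in $\R^3\setminus B_1$ and with wavenumber $\omega$ and source $J_{\inte}$ in $B_1$ (since $\eps = \mu = I$ there), together with continuity of tangential components across $\partial B_1$. The substitution $z = y/\rho$ gives the scaling identity
\begin{equation*}
\|H_c\|_{L^2(B_4\setminus B_2)}^2 \;=\; \rho\,\|\hat{\pbH}_\rho\|_{L^2(B_{4/\rho}\setminus B_{2/\rho})}^2,
\end{equation*}
so it suffices to exhibit $J_{\inte}$ making $|\hat{\pbH}_\rho|\gtrsim\rho^3$ uniformly on the annulus $B_{4/\rho}\setminus B_{2/\rho}$ (whose volume is of order $\rho^{-3}$).

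I will take $J_{\inte}(x) = \psi(|x|)V_1^0(\hat x)$ for a smooth profile $\psi$ supported in $B_1$. By separation of variables, $\hat{\pbE}_\rho = E(r)V_1^0(\hat x)$ with $E(r) = A\, j_1(\omega r) + \Psi(r)$ inside $B_1$ ($\Psi$ the regular particular solution obtained from $\psi$ by variation of parameters against $j_1, y_1$) and $E(r) = B\, h_1^{(1)}(\omega\rho r)$ outside (radiating). Continuity of $\hat{\pbE}_\rho\times\nu$ and $\hat{\pbH}_\rho\times\nu$ at $r=1$ yields a $2\times 2$ linear system for $(A, B)$; crucially, its second row carries an extra factor $1/\rho$ on the exterior side because $\hat{\pbH}_\rho = (i\omega)^{-1}\nabla\times\hat{\pbE}_\rho$ in $B_1$ but $\hat{\pbH}_\rho = (i\omega\rho)^{-1}\nabla\times\hat{\pbE}_\rho$ in $\R^3\setminus B_1$. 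Inserting the low-frequency asymptotics $h_1^{(1)}(\omega\rho)\sim -i(\omega\rho)^{-2}$ and $(h_1^{(1)})'(\omega\rho)\sim 2i(\omega\rho)^{-3}$, the determinant of this system is $-i\, j_1(\omega)\,\omega^{-2}\rho^{-3}\bigl(1+O(\rho^2)\bigr)$, nonzero for small $\rho$ by the non-resonance hypothesis and Lemma~\ref{opt-char}. After simplification via the Wronskian identity $j_1(\omega) y_1'(\omega) - j_1'(\omega) y_1(\omega) = 1/\omega^2$, Cramer's rule collapses to
\begin{equation*}
B \;=\; -\,\frac{\omega^3}{j_1(\omega)}\,\rho^3\int_0^1 j_1(\omega s)\,s^2\,\psi(s)\,ds\,\bigl(1 + O(\rho^2)\bigr).
\end{equation*}
On the far annulus, the argument $\omega\rho r \in [2\omega, 4\omega]$ keeps all Hankel factors of unit size, and the $1/r\sim\rho$ factor in spherical derivatives combined with the prefactor $(i\omega\rho)^{-1}$ in the curl formula for $\hat{\pbH}_\rho$ produces $|\hat{\pbH}_\rho|\sim|B|\sim\rho^3$, delivering the required $L^2$ lower bound.

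The main obstacle is verifying that the leading coefficient of $B$ is genuinely non-zero; the concrete choice $\psi(s) = j_1(\omega s)$ gives $\int_0^1 j_1(\omega s)^2\,s^2\,ds > 0$, settling the matter. The non-resonance hypothesis enters exactly once: to guarantee $j_1(\omega)\neq 0$ so that the $2\times 2$ determinant is genuinely of order $\rho^{-3}$ rather than smaller. A secondary (routine) check is that the $V_1^0$ source excites only the TE$_1^0$ mode so the ansatz $\hat{\pbE}_\rho = E(r) V_1^0$ is consistent; this follows from the invariance of the problem under the azimuthal symmetry and parity preserved by $V_1^0$.
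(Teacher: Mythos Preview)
Your argument is correct and reaches the same conclusion as the paper, but the route is genuinely different.

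The paper does not solve the $2\times 2$ transmission system explicitly.  Instead, it chooses the same source $J_{\inte}(x)=j_1(\omega r)V_1^1(\hat x)$ (your $\psi=j_1(\omega\,\cdot)$), sets $\bE_0=J_{\inte}$ and $\bH_0=(i\omega)^{-1}\nabla\times\bE_0$, and derives the energy--duality identity
\[
\int_{\partial B_1}(\nu\times\hat\bH_\rho)\cdot\bar\bE_0\,ds-\int_{\partial B_1}(\nu\times\hat\bE_\rho)\cdot\bar\bH_0\,ds=\int_{B_1}|\bE_0|^2>0,
\]
obtained by testing the interior system against $(\bE_0,\bH_0)$.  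The second boundary term is then shown to vanish as $\rho\to0$ by invoking the uniform bound of Lemma~\ref{lem:jump1} (this is where non-resonance enters).  Hence the $U_1^1$--component of $\hat\bH_\rho$ on $\partial B_1$ stays bounded below, and the same Hankel asymptotics you use propagate this to the far annulus.

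Your approach trades that abstract step for a direct ODE computation: variation of parameters for $\Psi$, Cramer's rule for $(A,B)$, and the Wronskian collapse to $B\propto\rho^3\int_0^1 j_1(\omega s)s^2\psi(s)\,ds/j_1(\omega)$.  This is more elementary---it never appeals to the PDE estimate of Lemma~\ref{lem:jump1}---and makes the dependence on the source profile $\psi$ explicit.  The paper's route, on the other hand, is structurally the same identity used in Proposition~\ref{pro-O-3} for the resonant case, so it exposes more clearly why the compatibility condition \eqref{passive} is the threshold between the $\rho^2$ and $\rho$ regimes: in the resonant case the first boundary term in the identity vanishes (because $j_1(\omega)=0$), forcing the second to be bounded below and shifting the nonzero contribution from $\hat\bH_\rho$ to $\hat\bE_\rho$.

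Two minor points worth tightening in your write-up: the justification of the single-mode ansatz is cleanest by uniqueness (the ansatz produces a radiating $H_{\loc}(\curl)$ solution of the full system, hence it is \emph{the} solution); and the lower bound on the far annulus needs $\int_2^4\bigl(4|h_1^{(1)}(\omega R)|^2+2|h_1^{(1)}(\omega R)+\omega R(h_1^{(1)})'(\omega R)|^2\bigr)\,dR>0$, which follows since $|h_1^{(1)}(t)|^2=t^{-2}+t^{-4}>0$ for real $t$.
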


   \begin{proof}  Consider 
\begin{equation}\label{def-Jint}
   	J_{\inte}(x) =  j_1(\omega r) V_1^1(\hat x)  \mbox{ in $B_1$},  
\end{equation}
where $r = |x|$ and $\hat x = x / |x|$. 
Set
   	\[\bE_0 = J_{\inte} \mbox{ and } \bH_0 = \frac{1}{i \omega} \nabla\times  \bE_0  \quad \mbox{ in } B_1.\]
   	Then 
   	\begin{equation}\label{proo-resonant-p2}\begin{cases}
   	\nabla\times \bE_0 = i\omega \bH_0 &\mbox{ in } B_1,\\[6pt]
   	\nabla\times \bH_0  = -i\omega \bE_0 & \mbox{ in } B_1.   
	\end{cases}\end{equation}
	Define   
	\[ (\hat  \bE_{\rho},  \hat \bH_{\rho})  = \rho (\mathcal E_{\rho},  \mathcal H_{\rho}) (\rho \cdot) \mbox{ in } \R^3, 
\]
where $(\mathcal E_{\rho},  \mathcal H_{\rho})$ is given in \eqref{def-EHrho}.
Then 
\begin{equation*}
   	\begin{cases}
   	\nabla\times \hat \bE_{\rho} = i\rho\omega \hat \bH_{\rho}  &\mbox{ in } \R^3\setminus B_1,\\[6pt]
   	\nabla\times \hat \bH_{\rho} = -i\rho\omega \hat \bE_{\rho}  &\mbox{ in } \R^3\setminus B_1,\\[6pt]
   	\nabla\times \hat \bE_{\rho} = i\omega \hat \bH_{\rho}  &\mbox{ in } B_1,\\[6pt]
   	\nabla\times \hat \bH_{\rho} = -i\omega \hat \bE_{\rho} + \bE_0 &\mbox{ in } B_1. 
   	\end{cases}
   	\end{equation*}
   We have  
   	\begin{equation}\label{con:equa1-1} \int_{\partial B_1}(\nu\times \hat \bH_{\rho})\cdot \bar{\bE}_0  ds - \int_{\partial B_1}(\nu\times \hat \bE_{\rho})\cdot \bar{\bH}_0 ds  = \int_{B_1}|\bE_0|^2 > 0 .\end{equation}
We claim that  
\begin{equation}\label{pro-O-2-e1}
\liminf_{\rho \to 0}\left|\int_{\partial B_1}(\nu\times \hat \bE_{\rho})\cdot \bar{\bH}_0 ds\right| =  0.
\end{equation}
Assuming this, we have, from \eqref{con:equa1-1},
$$
\liminf_{\rho \to 0} \left|  \int_{\partial B_1}(\nu\times \hat \bH_{\rho})\cdot \bar{\bE}_0  ds \right| > 0. 
$$
This implies, since $j_1(\omega)\neq 0$ by Lemma~\ref{opt-char}, that
$$
\liminf_{\rho \to 0} \left|  \int_{\partial B_1} \hat \bH_{\rho} \bar U^1_1  ds \right| > 0. 
$$
On the other hand,  by the separation of variables (see, e.g., \cite[Theorem 2.48]{Kirsch}), 
\begin{equation}\label{huhu}
   	\int_{\partial B_1} \hat \bH_\rho (r \hat x) \cdot  \bar U^{1}_1 (\hat x) \, d \hat x =  \frac{h^{(1)}_1(\omega\rho r) + \omega\rho rh'^{1}_{1}(\omega\rho r)}{r\big(h^{(1)}_1(\omega\rho ) + \omega\rho h'^{1}_{1}(\omega\rho)\big) }   \int_{\partial B_1} \hat \bH_{\rho}(\hat x) \cdot \bar U^1_1(\hat x)  d \hat x. 
   	\end{equation}
Using the fact
$$
\liminf_{\rho \to 0} \rho^{-2} \frac{1}{|h^{(1)}_1(\omega\rho ) + \omega\rho h'^{1}_{1}(\omega\rho) |}   > 0, 
$$
and taking $r = R/\rho$ with  $R\in (2, 4)$ in \eqref{huhu}, we obtain
$$
\liminf_{\rho\to 0}\rho^{-3} \int_{2}^4  \left|\int_{\partial B_1} \hat \bH_\rho (R\hat x/{\rho} ) \cdot  \bar U^{1}_1 (\hat x) \, d \hat x \right| \, d R > 0. 
$$
This implies, since $H_c (R \hat x) =\mathcal H_{\rho} (R \hat x) =  \rho^{-1} \bH_\rho( R \hat x / \rho)$ for $R \in (2, 4)$ and $\hat x  \in \partial B_1$, 
\begin{equation*}
\liminf_{\rho \to 0 } \rho^{-2 } \left\| H_{c}\right\|_{L^2(B_4\setminus B_2)} > 0, 
\end{equation*}
which is the conclusion.

\medskip 
It remains to prove \eqref{pro-O-2-e1}. Since
\begin{equation}\label{coucoucou}
	\bH_0 (x) = \frac{1}{i \omega} \nabla\times \bE_0 (x) =  \frac{2}{i\omega r}j_1(\omega r)Y_1^1 (\hat x) \hat x + \frac{1}{i\omega r}[j_1(\omega r) + \omega r j'_{1}(\omega r)] U_1^1 (\hat x ) \quad \mbox{ in } B_1, 
\end{equation}
	where $r = |x|$ and $\hat x = x / |x|$, using the  separation of variables (see, e.g., \cite[Theorem 2.48]{Kirsch}),  we have 
\begin{align}\label{pro-O-2-e2}
\liminf_{\rho \to 0}\left|\int_{\partial B_1}(\nu\times \hat \bE_{\rho})\cdot \bar{\bH}_0 \, d \hat x \right| &\leq C\liminf_{\rho \to 0}\left|\int_{\partial B_1}\hat \bE_{\rho}(\hat x)\cdot \bar V_1^1(\hat x) \,d\hat x\right| \\
\nonumber &= C\liminf\limits_{\rho\to 0}\left|\frac{-i\omega\rho}{\sqrt{2}}\int_{\partial B_1}\hat\bH_{\rho}(\hat x)|_{\ext} \cdot (\bar Y_1^1(\hat x)\hat x)\, d\hat x\right|.
\end{align}
Since, by Lemma \ref{lem:jump1}, 
\[\|\hat \bH_{\rho}\|_{H(\curl, B_5)} \leq C,\] 
we have 
\begin{equation}\label{pro-O-2-e3}
 \liminf\limits_{\rho\to 0}\left|\frac{-i\omega\rho}{\sqrt{2}}\int_{\partial B_1}\hat\bH_{\rho}|_{\ext} (\bar Y_1^1(\hat x)\hat x)\, d\hat x\right| = 0.
\end{equation}
Thus, \eqref{pro-O-2-e1} follows from \eqref{pro-O-2-e2} and \eqref{pro-O-2-e3}. \end{proof}

   We finally show the optimality of \eqref{thm1.2-est1} in the case where $J_{\ext} \equiv 0$ and $J_{\inte}$ does not satisfy the compatibility condition. 
   
   \begin{proposition}\label{pro-O-3}  Assume that $J_{\ext} = 0$ in $\R^3\setminus B_2$ and $j_1(\omega) = 0$. There exists $J_{\inte} \in [L^2(B_1)]^3$ such that 
   	\[\left\|E_{c}\right\|_{L^2(B_4\setminus B_2)} \geq C\rho,\]
   	for some positive constant $C$ independent of $\rho$.
   \end{proposition}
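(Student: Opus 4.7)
The argument follows the strategy of Proposition~\ref{pro-O-2}, with the resonance $j_1(\omega)=0$ providing the crucial new ingredient. Following the proof of Lemma~\ref{opt-char}, the plan is to choose
$$\bE_0(x) := j_1(\omega r)\,V_1^1(\hat x), \qquad \bH_0(x) := \tfrac{1}{i\omega}\nabla\times\bE_0,$$
so that $(\bE_0,\bH_0)\in\mathcal N$, and then set $J_{\inte}:=\bE_0$. This source is incompatible because $\int_{B_1}J_{\inte}\cdot\bar\bE_0 = \|\bE_0\|_{L^2(B_1)}^2>0$. The essential new feature, absent in the non-resonant setting of Proposition~\ref{pro-O-2}, is the explicit boundary data
$$\bE_0|_{\partial B_1} = 0, \qquad \bH_0|_{\partial B_1} = \tfrac{j_1'(\omega)}{i}\,U_1^1(\hat x),$$
where $j_1'(\omega)\neq 0$ by the Wronskian identity $j_1 y_1' - j_1' y_1 = 1/x^2$.

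Let $(\hat\bE_\rho,\hat\bH_\rho)$ be the radiating solution to the system \eqref{eqEc}, so that $E_c(y)=\rho^{-1}\hat\bE_\rho(y/\rho)$ on $B_4\setminus B_2$. The integration by parts used to derive \eqref{con:equa} in the proof of Theorem~\ref{thm1.2} applies verbatim to $(\hat\bE_\rho,\hat\bH_\rho)$ and yields
$$\Re\!\left\{\int_{\partial B_1}(\nu\times\hat\bH_\rho)\cdot\bar\bE_0\,ds + \int_{\partial B_1}(\nu\times\hat\bE_\rho)\cdot\bar\bH_0\,ds\right\} = \|\bE_0\|_{L^2(B_1)}^2.$$
Since $\bE_0|_{\partial B_1}=0$, the first boundary integral vanishes identically. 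Using $\bH_0|_{\partial B_1}\propto U_1^1$ together with the triple-product identity $(\nu\times\hat\bE_\rho)\cdot\bar U_1^1 = -\hat\bE_\rho\cdot\bar V_1^1$, the remaining integral reduces to a fixed nonzero multiple of
$$A_\rho := \int_{\partial B_1}\hat\bE_\rho|_{\ext}(\hat x)\cdot\bar V_1^1(\hat x)\,d\hat x,$$
and consequently $|A_\rho|\geq C > 0$ uniformly in $\rho$.

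To convert this into a lower bound on $E_c$ in the shell $B_4\setminus B_2$, apply the separation-of-variables formula for radiating Maxwell solutions in $\R^3\setminus B_1$ at wavenumber $k=\omega\rho$ (see \cite[Theorem 2.48]{Kirsch}), exactly as in Proposition~\ref{pro-O-2}:
$$\int_{\partial B_1}\hat\bE_\rho(r\hat x)\cdot\bar V_1^1\,d\hat x = \frac{h_1^{(1)}(\omega\rho r)}{h_1^{(1)}(\omega\rho)}\,A_\rho.$$
The small-argument asymptotic $h_1^{(1)}(x)\sim -i/x^2$ as $x\to 0^+$ gives $|h_1^{(1)}(\omega\rho)|^{-1}\geq c\rho^2$, while $|h_1^{(1)}(\omega R)|$ is bounded below on the compact interval $R\in(2,4)$. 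Hence $\bigl|\int_{\partial B_1}\hat\bE_\rho(R\hat x/\rho)\cdot\bar V_1^1\,d\hat x\bigr|\geq C\rho^2$ for such $R$. Orthonormality of the vector spherical harmonics yields $\|\hat\bE_\rho(R\,\cdot/\rho)\|_{L^2(\partial B_1)}\geq C\rho^2$, and integrating over $R\in(2,4)$ and applying the change of variable $z=y/\rho$ (Jacobian $\rho^3$) together with $E_c(y)=\rho^{-1}\hat\bE_\rho(y/\rho)$ delivers $\|E_c\|_{L^2(B_4\setminus B_2)}^2\geq C\rho^2$, which is the claim.

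The delicate point is obtaining the \emph{uniform} positive lower bound $|A_\rho|\gtrsim 1$; this is precisely where the vanishing Dirichlet trace $\bE_0|_{\partial B_1}=0$ (i.e.\ the resonance condition) is essential, since it eliminates the $\hat\bH_\rho$-term in the identity above that could, a priori, cancel against the $A_\rho$-contribution. Once $|A_\rho|\gtrsim 1$ is established, the remainder of the proof is a routine accounting of the low-frequency behavior of $h_1^{(1)}$ already carried out in Proposition~\ref{pro-O-2}.
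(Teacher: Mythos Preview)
Your proposal is correct and follows essentially the same route as the paper's own proof. You choose the same source $J_{\inte}=\bE_0=j_1(\omega r)V_1^1(\hat x)$, invoke the same boundary identity (you cite \eqref{con:equa} from Theorem~\ref{thm1.2} with $+$ and $\Re$, while the paper re-uses the equivalent form \eqref{con:equa1-1} with a minus sign and no real part), use $j_1(\omega)=0$ to kill the $\hat\bH_\rho$-term, and then run the identical separation-of-variables and low-frequency asymptotics for $h_1^{(1)}$; your explicit mention of $j_1'(\omega)\neq 0$ via the Wronskian makes precise a step the paper leaves implicit in passing from \eqref{coucoucou} to \eqref{order1-p3}.
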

   
   \begin{proof}  Define  $J_{\inte}$ by \eqref{def-Jint}. We use the notations in the proof of Proposition~\eqref{pro-O-2}. We have
   	\begin{equation}\label{cccc} \int_{\partial B_1}(\nu\times \hat \bH_{\rho})\cdot \bar{\bE}_0  ds - \int_{\partial B_1}(\nu\times \hat \bE_{\rho})\cdot \bar{\bH}_0 ds  = \int_{B_1}|\bE_0|^2 > 0 .\end{equation}
   	Since $j_1(\omega) = 0$, it follows that 
	$$
	 \int_{\partial B_1}(\nu\times \hat \bH_{\rho})\cdot \bar{\bE}_0  ds = 0. 
	$$
	We derive from \eqref{cccc} that \footnote{This is the difference between the resonant and the non-resonant cases.}
	\[
\liminf_{\rho \to 0}\left|\int_{\partial B_1}(\nu\times \hat \bE_{\rho})\cdot \bar{\bH}_0 ds\right| > 0.
\]
This implies, by \eqref{coucoucou}, 
	\begin{equation}\label{order1-p3}\liminf\limits_{\rho \to 0}  \left| \int_{\partial B_1} \hat \bE_\rho (\hat x) \cdot  \bar V^{1}_1(\hat x) \, d \hat x \right| > 0.\end{equation}
By the separation of variables (see, e.g., \cite[Theorem 2.48]{Kirsch}), for $r > 2$, we obtain
	\begin{equation}\label{order1-p4}
	\int_{\partial B_1} \hat \bE_\rho (r \hat x) \cdot  \bar V^{1}_1 (\hat x) \, d \hat x =  \frac{h^{(1)}_1(\omega\rho r)}{h^{(1)}_1(\omega\rho )}  \int_{\partial B_1} \hat \bE_\rho (\hat x) \cdot \bar  V^{1}_1(\hat x) \, d \hat x. 
	\end{equation}
	Taking  $r = R/\rho$ with  $R\in (2, 4)$ in \eqref{order1-p4}, since  $\dsp
	\lim_{\rho \to 0} \rho^{-2} \left|\frac{h^{(1)}_1(\omega R)}{h^{(1)}_1(\omega\rho )} \right| > 0
	$,  we obtain   from \eqref{order1-p3}  that 
	$$
	\liminf_{\rho\to 0}\rho^{-2} \int_{2}^4  \left|\int_{\partial B_1} \hat \bE_\rho (R\hat x/{\rho} ) \cdot  \bar V^{1}_1 (\hat x) \, d \hat x \right| \, d R > 0. 
	$$
	This implies 
	\begin{equation*}
	\liminf_{\rho \to 0 } \rho^{-1} \left\| E_{c}\right\|_{L^2(B_4\setminus B_2)} > 0, 
	\end{equation*}
which is the conclusion. 
\end{proof}

\end{document}